\title{The Logic of Resources and Capabilities}
\author[M.~B\'{i}lkov\'{a}]{Marta B\'{i}lkov\'{a}}
\address{Department of Logic, Faculty of Arts, Charles University}
\email{marta.bilkova@ff.cuni.cz}
\thanks{The research of the first author has been supported by the project SEGA: From Shared Evidence to Group Agency, of the Czech Science Foundation, and DFG no.~16-07954J.}
\author[G.~Greco]{Giuseppe Greco}
\address{Department of Languages, Literature and Communication, 
%Institute of Linguistics OTS, 
University of Utrecht}
\email{G.Greco@uu.nl}
\author[A.~Palmigiano]{Alessandra Palmigiano}
\address{Faculty of Technology, Policy and Management, Delft University of Technology,\\ Department of Pure and Applied Mathematics, University of Johannesburg}
\email{A.Palmigiano@tudelft.nl}
\author[A.~Tzimoulis]{Apostolos Tzimoulis}
\address{Faculty of Technology, Policy and Management, Delft University of Technology}
\email{A.Tzimoulis-1@tudelft.nl}
\thanks{The research of the second, third and fourth author has been made possible by the NWO Vidi grant 016.138.314, the NWO Aspasia grant 015.008.054, and a Delft Technology Fellowship awarded in 2013.}
\thanks{The research of the second author was also supported by the Values4Water project, subsidised by the Responsible Innovation research programme, which is partly financed by the Netherlands Organisation for Scientific Research (NWO) under Grant Number 313-99-316.}
\author[N.~Wijnberg]{Nachoem Wijnberg}
\address{Faculty of Economics and Business, University of Amsterdam,\\ College of Business and Economics, University of Johannesburg}
\email{N.M.Wijnberg@uva.nl}
\newcommand{\commment}[1]{}
\def\aol{\rule[0.5865ex]{1.38ex}{0.1ex}}
\def\pdra{\mbox{$\,>\mkern-8mu\raisebox{-0.065ex}{\aol}\,$}}
\def\pdla{\mbox{\rotatebox[origin=c]{180}{$\,>\mkern-8mu\raisebox{-0.065ex}{\aol}\,$}}}
\def\mANDORatom#1{\hbox{\hbox to 0pt{$#1\TriangleUp$\hss}$#1\TriangleDown$}}
\newcommand{\mcAND}{%
\mathrel{\ooalign{\raisebox{-0.39ex}{$\mbox{\TriangleUp}$}\cr\kern4.2pt{\raisebox{-0.13ex}{$\cdot$}}}}}
\newcommand{\mcand}{%
\mathrel{\ooalign{$\vartriangle$\cr\kern1.99pt{\raisebox{-0.17ex}{$\cdot$}}}}}
\newcommand{\nAND}{%
\mathrel{\ooalign{$\mbox{\TriangleUp}$\cr\kern0pt$\mbox{\rotatebox[origin=c]{180}{\TriangleUp}}$}}}
\newcommand{\nand}{%
\mathrel{\ooalign{$\vartriangle$\cr\kern0pt$\triangledown$}}}
\newcommand{\mcBAND}{%
\mathrel{\ooalign{\raisebox{-0.39ex}{$\mbox{\FilledTriangleUp}$}\cr\kern4.2pt{\raisebox{-0.13ex}{${\color{white}\cdot}$}}}}}
\newcommand{\mcband}{%
\mathrel{\ooalign{$\blacktriangle$\cr\kern1.99pt{\raisebox{-0.17ex}{${\color{white}\cdot}$}}}}}
\newcommand{\mcRA}{%
\mathrel{\ooalign{
                  \raisebox{-0.3ex}{$\rotatebox[origin=c]{-90}{$\mbox{{\TriangleUp}}$}$}
                                                                            \cr\kern2.7pt{\raisebox{0.2ex}{$\cdot\mkern1.3mu$}}}}}
\newcommand{\mcra}{%
\mathrel{\ooalign{$\,{\vartriangleright\,}$\cr\kern3pt{\raisebox{0ex}{$\cdot$}}}}}
\newcommand{\mcraline}{%
-{\mkern-6mu{\mathrel{\ooalign{$\,{\vartriangleright\,}$\cr\kern3pt{\raisebox{0ex}{$\cdot$}}}}}}}
\newcommand{\mdraline}{%
{\mathrel{\ooalign{$\,{\vartriangleright\,}$\cr\kern3pt{\raisebox{0ex}{$\cdot$}}}}}{\mkern-6mu}-}
\newcommand{\cra}{%
\mathrel{\ooalign{$\,-{\mkern-3mu\vartriangleright\,}$\cr\kern8pt{\raisebox{0ex}{$\cdot$}}}}}
\newcommand{\mcBRA}{%
\mathrel{\ooalign{
                  \raisebox{-0.3ex}{$\rotatebox[origin=c]{-90}{$\mbox{\FilledTriangleUp}$}$}
                                                                            \cr\kern2.7pt{\raisebox{0.2ex}{${\color{white}\cdot}$}}}}}
\newcommand{\mcbra}{%
\mathrel{\ooalign{$\,-{\mkern-3mu\blacktriangleright\,}$\cr\kern8pt{\raisebox{0ex}{$\cdot$}}}}}
\newcommand{\mcLA}{%
\mathrel{\ooalign{
                  \raisebox{-0.3ex}{$\rotatebox[origin=c]{90}{$\mbox{\TriangleUp}$}$}
                                                                                     \cr\kern5.5pt{\raisebox{0.2ex}{$\cdot$}}
                                                                                                                              }}}
\newcommand{\mcla}{%
\mathrel{\ooalign{$\,{\vartriangleleft\,}$\cr\kern5pt{\raisebox{0ex}{$\cdot$}}}}}
\newcommand{\mclaline}{%
-{\mkern-6mu{\mathrel{\ooalign{$\,{\vartriangleleft\,}$\cr\kern5pt{\raisebox{0ex}{$\cdot$}}}}}}}
\newcommand{\mcBLA}{%
\mathrel{\ooalign{
                  \raisebox{-0.3ex}{$\rotatebox[origin=c]{90}{$\mbox{\FilledTriangleUp}$}$}
                                                                                     \cr\kern5.5pt{\raisebox{0.2ex}{${\color{white}\cdot}$}}
                                                                                                                                            }}}
  \numberwithin{equation}{section}
\newcommand\val[1]{{\lbrack\!\lbrack} {#1}{\rbrack\!\rbrack}}
\newcommand{\pand}{\wedge}
\newcommand{\por}{\vee}
\newcommand{\pra}{\rightarrow}
\newcommand{\plra}{\leftrightarrow}
\newcommand{\pla}{\leftarrow}
\newcommand{\gI}{%
\mathrel{\ooalign{$\mbox{T}$\cr\kern0pt$\mbox{\rotatebox[origin=c]{180}{T}}$}}}
\newcommand{\gbot}{\rotatebox[origin=c]{180}{$\tau$}}
\def\aga{\texttt{a}}
\def\agb{\texttt{b}}
\def\aol{\rule[0.5865ex]{1.38ex}{0.1ex}}
\newcommand{\WKnowProxy}[2]{%
  {\mathbin{\ooalign{$#1\circ#2 $\cr\hidewidth
%  {$#1{\scriptscriptstyle{\ast}}#2$}\hidewidth\cr  }}}}
   \raise.155ex\hbox{$#1{\scriptstyle{\ast}}#2$}\hidewidth\cr  }}}}
\newcommand{\BKnowProxy}[2]{%
  {\mathbin{\ooalign{$#1\bullet#2 $\cr\hidewidth
%  {$#1{\scriptscriptstyle{\color{white}{\ast}}}#2$}\hidewidth\cr  }}}}
   \raise.155ex\hbox{$#1{\scriptstyle{\color{white}{\ast}}}#2$}\hidewidth\cr  }}}}
\newcommand{\fns}{\footnotesize}
\newcommand{\mc}{\multicolumn}
\newcommand{\dc}{\mbox{$\Diamond$}}
\newcommand{\dr}{\mbox{$\Diamond\mkern-7.58mu\rule[0.05ex]{0.12ex}{1.42ex}\ \,$}}
\newcommand{\bc}{\mbox{$\,\vartriangleright\,$}}
\newcommand{\br}{\mbox{$\,\vartriangleright\mkern-17mu\rule[0.52ex]{1.25ex}{0.12ex}\,$}}
\newcommand{\dcs}{\dc^{\!\sigma}}
\newcommand{\bcs}{\bc^{\!\!\!\sigma}}
\newcommand{\bcp}{\bc^{\!\!\!\pi}}
\newcommand{\drs}{\dr^{\!\sigma}}
\newcommand{\brp}{\br^{\!\!\pi}}
\newcommand{\drd}{\dr^{\!\delta}}
\newcommand{\brd}{\br^{\!\!\!\delta}}
\newcommand{\adr}{{{{\mbox{\scriptsize$\blacksquare$}}
\mkern-5.5mu{\textcolor{white}{\rule[-0.05ex]{0.12ex}{1.65ex}}}\,\,}}}
\newcommand{\abr}{\mbox{$\blacktriangleright\mkern-17.65mu\textcolor{white}{\rule[0.52ex]{1.6ex}{0.12ex}}$}}
\newcommand{\labr}{\mbox{$\blacktriangle\mkern-6.97mu\textcolor{white}{\rule[-0.06ex]{0.12ex}{1.5ex}}\ $}}
\newcommand{\DC}{\mbox{$\circ$}}
\newcommand{\BC}{\mbox{$\,\cdot\joinrel\vartriangleright\joinrel\cdot\,$}}
\newcommand{\DR}{\mbox{$\circ\mkern-5mu\rule[0.185ex]{0.12ex}{0.8ex}\ $}}
\newcommand{\BR}{\mbox{$\cdot\,{\vartriangleright\mkern-17mu\rule[0.52ex]{1.25ex}{0.12ex}}\,\,\cdot$}}
\newcommand{\ADR}{\mbox{${\bullet\mkern-5.03mu\textcolor{white}{\rule[0ex]{0.12ex}{1.1ex}}\ }$}}
\newcommand{\ABC}{\,\cdot\joinrel\blacktriangleright\joinrel\cdot\,}
\newcommand{\ADC}{{\bullet}}
\newcommand{\ABR}{\mbox{$\,\cdot\joinrel\!\!{\blacktriangleright\mkern-17.65mu\textcolor{white}{\rule[0.52ex]{1.6ex}{0.12ex}}}\!\joinrel\!\cdot\,$}}
\newcommand{\LABR}{\mbox{$\,\cdot\joinrel\!\!{{\blacktriangle\mkern-6.97mu\textcolor{white}{\rule[-0.06ex]{0.12ex}{1.5ex}}}}\joinrel\cdot\,$}}
\newcommand{\RCDOT}{\gtrdot}
\newcommand{\LCDOT}{\lessdot}
\newcommand{\RPLUS}{\sqsupset}
\newcommand{\LPLUS}{\sqsubset}
\newcommand{\rand}{\sqcap}
\newcommand{\ror}{\sqcup}
\newcommand{\RAND}{\,,\,}
\newcommand{\adc}{{\scriptstyle\blacksquare}}
\newcommand{\abc}{\blacktriangleright}
\def\fCenter{{\mbox{$\ \vdash\ $}}}
\newcommand{\ac}{\texttt{\!c}}
\newcommand{\ad}{\texttt{\!d}}
\newcommand{\aj}{\texttt{\!j}}
\newcommand{\bba}{\mathbb{A}}
\newcommand{\bbA}{\mathbb{A}}
\newcommand{\bbQ}{\mathbb{Q}}
\newcommand{\bbM}{\mathbb{M}}
\newcommand{\I}{\textrm{I}}
\newcommand{\marginnote}[1]{\marginpar{\raggedright\tiny{#1}}}
\theoremstyle{plain}
\newtheorem{thm}{Theorem}[section]
\newtheorem{lem}[thm]{Lemma}
\newtheorem{cor}[thm]{Corollary}
\newtheorem{prop}[thm]{Proposition}
\newtheorem{lemma}[thm]{Lemma}
\theoremstyle{definition}
\newtheorem{definition}[thm]{Definition}
\begin{document}

\begin{abstract}
We introduce the logic LRC, designed to describe and reason about agents' abilities and capabilities in using resources. The proposed framework bridges two -- up to now -- mutually independent  strands of literature: the one on logics of abilities and capabilities, developed within the theory of agency, and the one on logics of resources, motivated by program semantics. The logic LRC is suitable to describe and reason about key aspects of social behaviour in organizations. We prove a number of properties enjoyed by LRC (soundness, completeness, canonicity, disjunction property) and its associated analytic calculus (conservativity, cut elimination and subformula property).
These results  lay at the intersection of the algebraic theory of unified correspondence and the theory of multi-type calculi in structural proof theory. Case studies are discussed which showcase several ways in which this framework can be extended and enriched while retaining its basic properties, so as to model an array of issues, both practically and theoretically  relevant, spanning from planning problems to the logical foundations of the theory of organizations.\\
{\em Keywords}: display calculus, logics for organizations,  multi-type calculus, algebraic proof theory.\\
{\em Math. Subject Class. 2010}: 03B42, 03B20, 03B60, 03B45, 03F03, 03G10, 03A99.
\end{abstract}

\maketitle

%\tableofcontents

\section{Introduction}
Organizations are social units of agents structured and managed to meet a need, or  pursue collective goals. %The study of organizations
In economics and social science,  organizations are studied % explains the effect various forms of organizational structures have on the generation of competitive advantage
in terms of %notions such as
agency, goals, capabilities, and inter-agent coordination \cite{scott1961organization,tsoukas2005oxford, gibbons2013handbook}. In strategic management, the dominant approach in the study of organizational performances is the so-called {\em resource-based view} %theory of organizations is the  %Moreover, the mainstream  literature in the theory of organizations in management science
\cite{wernerfelt1984resource, barney1991firm, mahoney1995management}, which has recognized that a central role in determining  the   success of an organization in market competition is played by the  acquisition, management, and transformation of {\em resources} within that organization. In order to capture  this insight and create the building blocks of the logical foundations of the theory of organizations, a formal framework is needed in which it is possible to express and reason about  agents' abilities and capabilities to use resources for achieving  goals, to transform resources into other resources, and to coordinate the use of resources with other agents; i.e., a formal framework is needed for capturing and reasoning about the  {\em resource flow within organizations}. The present paper aims at introducing such a framework. %laying the groundwork for the logical study of  {\em resource flow within organizations}, that is, the acquisition, management,  and transformation of resources,  resulting  from the activity of a single agent or of multiple agents.

There is extensive  literature in philosophical logic and formal AI accounting for  agents' abilities (cf.~e.g.~\cite{cross1986can,brown1988ability}) and capabilities (cf.~e.g.~\cite{van1994logic,dignum2004model,Dignum}) and their interaction, embedding in  the wider context of the logics of   agency (cf.~e.g.~\cite{chellas1969logical, segerberg1982deliberate,belnap1988stit, belnap1991backwards, chellas1995bringing, elgesem1997modal}); some of these frameworks (viz.~\cite{dignum2004model,Dignum}) have been used to formalize some aspects of the theory of organizations.  There is also  literature in theoretical computer science on the logic of resources (cf.~e.g.~\cite{pym2004possible, pym2006resources}), motivated by the build-up of mathematical models of computational systems. However, these two strands of research have been pursued independently, and in particular, the {\em interaction} between abilities, capabilities and resources has not been explored before.    %are all necessary items in  the toolbox for studying  organizations,\footnote{Specifically,  actions which are relevant to this context are those  brought about by the activity of agents (rather than actions capturing a more general notion of change, which can be formulated independently of agency).} the formal toolbox for studying organizations
%need to be augmented so as to   using resources to achieve a goal, transforming resources, and coordinating the use of resources are the building blocks of a logical theory of organizations.

   %presently there are not many examples in the literature of logical systems specifically designed to describe the internal dynamics of organizations, \cite{Dignum} being one seminal work in this direction. The logical formalism of \cite{Dignum} is similar in spirit to STIT logics \cite{}, in that it aims at describing  actions which are specifically brought about by the activity of agents (rather than actions capturing a more general notion of change, which can be formulated independently of agency). Hence, rather than being treated as  primitive, as is done in PDL,  actions are regarded  as emerging from a hierarchy of more primitive notions, the most basic of which are agency, and agents' {\em capabilities}.

The present paper introduces a logical framework,  the {\em logic of resources and capabilities} (LRC),  %extends and refines the logical system introduced in \cite{Dignum}, and
%aims at contributing to the logical foundations of organization theory  by
designed as an environment for the logical modelling of  the behaviour of agents motivated and mediated by the use and transformation of resources. In this framework, agents' capabilities are not captured via primitive actions, as is done e.g.~in \cite{van1994logic}, but rather via dedicated modalities,  similarly to the frameworks adopting the STIT logic approach \cite{belnap1988stit,dignum2004model,Dignum}.
However, LRC  differs from these logics in two main respects; the first is the focus on {\em resources}, discussed above; the second is that, as a modal extension of intuitionistic logic, LRC inherits its {\em constructive} character: it comes equipped with a constructive proof theory which provides an explicit computational content brought out by the cut elimination theorem. This guarantees that each LRC-theorem (prediction) translates into an effective procedure, thus allowing for a greater amenability to concrete applications in planning, and paving the way for implementations in constructive programming environments. In particular, LRC enjoys the disjunction property, proof of which we have included in Section \ref{ssec:disjunction property}.
In the present  paper, the basic mathematical theory of the logic of resources and capabilities is developed in an   algebraic and proof-theoretic environment.
Specifically,  the most important technical tool we introduce for LRC is the {\em proof calculus} D.LRC (cf.\ Section \ref{sec: Display-style sequent calculus D.LRC}). This calculus is
designed according to the  {\em multi-type} methodology, introduced in \cite{Multitype,PDL, Trends}, and further developed in \cite{linearlogPdisplayed, latticelogPdisplayed,Inquisitive,SDM}. This methodology exploits facts and insights coming from various semantic theories: from the coalgebraic semantics of dynamic epistemic logics (cf.\ \cite{GKPLori}), to the algebraic dual of the team semantics for inquisitive logic (cf.\ \cite{Inquisitive}), the representation theorems for lattices (cf.\ \cite{latticelogPdisplayed}), and the recently developed algebraic theory of unified correspondence \cite{CoGhPa14,CoPa12,CFPS15,PaSoZh15,CPZ:Trans,CPSZ,ConPalSou}, in the context of which, systematic connections have been developed (cf.\ \cite{GMPTZ,MZ16}) between Sahlqvist-type correspondence results and the theory of analytic rules for proper display calculi (cf.\ \cite{Wa98}) and Gentzen calculi.

Multi-type languages make it possible to express constituents such as actions, agents, or resources not as {\em parameters} in the generation of formulas, but as {\em terms} in their own right. They thus are regarded as  first-class citizens of the multi-type framework, and are  endowed with their corresponding structural connectives and rules. In this rich environment, %many features which were insurmountable hurdles to the standard treatment can be understood as symptoms of  the original languages of these logics lacking the necessary expressivity to
it is possible to encode certain key interactions {\em within the language}, by means of structural analytic rules. This approach has made it possible to develop analytic calculi for  logics notoriously impervious to the standard proof-theoretic treatment, such as Public Announcement Logic \cite{Plaza}, Dynamic Epistemic Logic \cite{BMS}, their nonclassical counterparts \cite{AMM, KP13}, and PDL \cite{kozen}.   %The success of the multi-type methodology lies in its providing a mathematical environment in which the expressivity problems can be clearly identified.
%For this reason,

One of the most important  benefits of multi-type calculi is the degree of {\em modularity} for which they allow. When applied to the present setting,   the metatheory of multi-type calculi makes it possible to add (resp.~remove) analytic structural rules to (resp.~from) the basic calculus D.LRC, and  obtain  variants endowed with a package of basic properties (soundness, completeness, cut-elimination, subformula property, conservativity) as  immediate consequences of general results. This feature is illustrated and exploited in Section \ref{sec:casestudies}, where we specialize D.LRC to various situations by adding certain analytic structural rules to it. More in general, an infinite class of axiomatic extensions and combinatoric variants of LRC  can be captured in a systematic way within this framework. Hence, LRC can be regarded not just as one single logic, but as a  {\em class} of  interconnected logical systems. Besides being of theoretical interest, this feature is of great usefulness in practice, since this class of logics forms  a coherent  framework which can be adapted to very different concrete settings with minimum effort. The combined strengths of this class of logics make the resulting LRC framework into a viable proposal for capturing and reasoning about the   resource flow within organizations.  %, in the sense that the rules added are all analytic, no further work is required. In particular, the axioms corresponding to the rules Ex$_c$ and Ex$_d$ are analytic inductive, and hence canonical (a direct proof of their canonicity is also given below); hence, the axiomatic extension of LRC corresponding to these axioms is sound and complete w.r.t.~the corresponding class of heterogeneous LRC-models. Conservativity can be argued by repeating verbatim the same argument given in Section ??? which uses the soundness of the augmented calculus w.r.t.~the corresponding class of perfect heterogeneous LRC-models, and the completeness of the Hilbert-style presentation of the axiomatic extension which holds because the additional axioms are canonical. Finally, cut-elimination and subformula property follow from the general cut-elimination metatheorem.

Finally,   LRC is the first example of a logical system  designed from first principles according to the multi-type methodology. As this example  shows,  multi-type calculi can serve not only to provide existing logics with well-performing calculi, but also   as a methodological platform for  the analysis and the meta-design of new logical frameworks. %The logic of resources and capabilities can be of interest also in this respect, since  %\marginnote{We need to link up with unified correspondence and say that this methodology draws from the state of the art of semantic theories, so we have an integrated syntactic/semantic approach, since these calculi come endowed with a semantics, so it's a package}

\paragraph{Structure of the paper.} In Section \ref{ssec:HLRC}, the logic LRC is introduced by means of a Hilbert-style presentation, which  is shown to be complete w.r.t.\ certain algebraic models (cf.~Section \ref{ssec:algebraic completeness}),  canonical (cf.~Section \ref{ssec:algebraic canonicity}) and to enjoy the disjunction property (cf.~Section \ref{ssec:disjunction property}). %In Section \ref{ssec:quasi-def}, we report on the cut-elimination metatheorem which we will appeal to when proving the
Then, in Section \ref{sec: Display-style sequent calculus D.LRC}, the multi-type calculus D.LRC is introduced, and is shown to be sound w.r.t.\ the algebraic models (cf.\ Section \ref{ssec:soundness}),  complete (cf.\ Section \ref{ssec:completeness}), and conservative (cf.\ Section \ref{ssec:conservativity}) w.r.t.\ the Hilbert-style presentation introduced in Section \ref{ssec:HLRC}.
%
%The present paper contains also contributions to the general theory of multi-type calculi. Specifically, in Section \ref{ssec:quasi-def}, we introduce the notion of  {\em proper semi-display multi-type  calculus}, which  generalizes Wansing's notion of proper display calculus (cf.\ \cite{Wa98}), and simultaneously encompasses the generalizations of this notion introduced in \cite{GAV,Multitype}. In particular, the semi-display condition requires that, for each type, it is either the case that principal occurrences in antecedent position of terms of that type be in display, or principal occurrences in succedent position be in display. %This requirement expands the scope of applicability of this metatheorem beyond display calculi.
%In Section \ref{ssec:metatheorem}, we prove the Belnap-style cut elimination metatheorem proven in , and
In Section \ref{ssec:cut elim subformula}, we prove that the calculus D.LRC %is sound,  complete, conservative; moreover, it
satisfies the assumptions of the cut-elimination  metatheorem proven in \cite{Trends}, and hence enjoys cut-elimination and subformula property.
In Section \ref{sec:casestudies}, we start exploring  various ways in which D.LRC can be modified and adapted to different contexts so that the resulting systems  retain all the properties enjoyed by the basic system. Specifically, Section \ref{ssec:homework} illustrates how {\em coordination} among agents helps optimizing capabilities towards a goal;  Section \ref{ssec:crow} explores the solution of a planning problem which requires the suitable concatenation of {\em reusable} and {\em non-reusable} resources; Section \ref{ssec:gifts} focuses on a situation in which the possibility of resources to be used in different {\em roles} becomes relevant; Section \ref{ssec:resilience} illustrates how the {\em resilience} of a fragment of a system can propagate to the system as a whole.
 %Future directions include logical modelling of organization theory in social science and management science. In the present paper, we start exploring
	
%\marginnote{Discuss informally case studies and the features of each of them. Moreover, discuss the technical advancements from the perspective of Display Calculi.}

\section{The logic of resources and capabilities and its algebraic semantics}
\subsection{Hilbert-style presentation of LRC}
\label{ssec:HLRC}

As mentioned in the introduction, the key idea is to introduce a language in which resources are not  accounted for as {\em parameters} indexing  the capability connectives, but as {\em logical terms} in their own right.
Accordingly,  we start by defining a {\em multi-type language} in which the different types interact via special  connectives. The present setting consists of the types $\mathsf{Res}$ for resources and $\mathsf{Fm}$ for formulas (describing states of affairs).
We stipulate that  $\mathsf{Res}$ and $\mathsf{Fm}$ are disjoint.

Similarly to the binary connectives introduced in \cite{Multitype}, the  connectives $\bc$, $\dr$ and $\br$ (referred to as {\em heterogeneous connectives}) facilitate the interaction between  resources and formulas:\footnote{As discussed below, these modal operators intend to capture agents' abilities and capabilities vis-\`a-vis resources; in this section, for the sake of a simpler exposition, we present the single-agent version of LRC, where any explicit mention of the agent is omitted.}
\begin{center}
\begin{tabular}{ll}
$\bc  :  \mathsf{Res} \times \mathsf{Fm} \to \mathsf{Fm}$ & $\br: \mathsf{Res} \times \mathsf{Res} \to \mathsf{Fm}$ \\
$\dc: \mathsf{Fm} \to \mathsf{Fm}$ & $\dr:  \mathsf{Res} \to \mathsf{Fm}$\\
\end{tabular}\end{center}

\noindent As discussed in the next section, the mathematical environment of heterogeneous LRC-algebras provides a natural interpretation for all these connectives.
Let us introduce the language of the logic of resources and capabilities. Let \textsf{AtProp} and \textsf{AtRes} %and \textsf{Ag}($\mathcal{L}$)
be  countable and disjoint sets of atomic propositions and atomic resources, respectively. The  set $\mathcal{R} = \mathcal{R}(\textsf{AtRes})$ of the {\em resource-terms} $\alpha$ {\em over} $\textsf{AtRes}$, and the set $\mathcal{L} = \mathcal{L}(\mathcal{R}, \mathsf{AtProp})$ of the {\em formula-terms} $A$ {\em over} $\mathcal{R}$ and  $\textsf{AtProp}$ of the Logic of Resources and Capabilities (LRC) are defined  as follows:
\begin{center}
	$\alpha ::= a\in \mathsf{AtRes} \mid\ 1 \mid 0 \mid  \alpha \cdot \alpha \mid \alpha\ror\alpha\mid \alpha\rand \alpha,$ %\mid \alpha\cap\alpha \mid \wn A \mid \alpha^\ror\;\; (A\in \mathcal{L})$.
\end{center}
\begin{center}
	$A ::= p\in \mathsf{AtProp} \mid\top \mid \bot \mid A \vee A \mid A \wedge A\mid A \to A \mid \alpha \bc A \mid \dc A \mid \dr \alpha \mid \alpha \br \alpha. \;\; $
\end{center}
%\marginnote{I created a macro `rand' for the conjunction of resources $\rand$, which is now in macrosPDL. Giuseppe, could you please modify it?}
When writing formulas, we will omit brackets whenever the functional type of the connectives allows for a unique reading. Hence, for instance, we will write $\alpha\bc(\dc A)$ as $\alpha\bc\dc A$ and $(\alpha \cdot \beta)\bc A$ as $\alpha \cdot \beta\bc A$. We will also  abide by the convention that $\vee$, $\wedge$, $\dc$, $\dr$, $\bc$ and $\br$ bind more strongly than $\rightarrow$,  that $\dc$, $\dr$, $\bc$ and $\br$ bind more strongly than $\vee$ and $\wedge$, and that $\leftrightarrow$ is a weaker binder than any other connective. With this convention, for instance, $\alpha\bc A\wedge B$ has the same reading as $(\alpha\bc A)\wedge B$.

\smallskip
The (single-agent version of the) logic of resources and capabilities LRC, in its Hilbert-style presentation H.LRC, is defined as the smallest set of formulas containing the axioms and rules of intuitionistic propositional logic\footnote{The classical propositional logic counterpart of LRC can be obtained as usual  by adding e.g.\  excluded middle  to the present axiomatization. Notice that classical propositional base is not needed in any  of the case studies of Section \ref{sec:casestudies}.}  plus the following axiom schemas:

\begin{center}
\begin{tabular}{rl}
& Pure-resource entailment schemas \\
%\marginnote{G: we need to specify the axiomatization of the propositional base: classical / intuitionistic or, simply, a distributive-based logic? See the structural rules for distributive lattices in Section 4. A: I would say: intuitionistic, but we remark that different propositional bases are also possible}
%\marginnote{\textcolor{red}{Note that we don't need axiom 9: given axiom 5, replacement of equivalents (or Cut) and Balance (see the first rule below on the right, it is derivable).
%The axioms 7 and 8 are not sound with the resources reading. Axiom 1 is derivable using Balance and axiom 6.}}

%\medskip
%\begin{tabular}{rl}
R1. & $\ror$ and $\rand$ are commutative, associative, idempotent, and distribute over each other; \\
R2. & $\cdot$ is associative with unit $1$; \\

R3. & $\alpha \vdash 1$ and $0 \vdash \alpha$ \\

R4. & $\alpha\cdot(\beta\ror\gamma)\vdash(\alpha\cdot\beta)\ror(\alpha\cdot\gamma)$ and $(\beta\ror\gamma)\cdot\alpha\vdash(\beta\cdot\alpha)\ror(\gamma\cdot\alpha)$. \\
\end{tabular}
\end{center}

\begin{center}
\begin{tabular}{rlrl}
         & Axiom schemas for $\dc$ and $\dr$                                                             &        &                                                                                                                                   \\
D1.   & $\dc (A \lor B) \plra \dc A \lor \dc B$                                                 & D3. & $\dr (\alpha \ror \beta) \plra \dr \alpha \lor \dr \beta$                                                   \\
D2.   & $\dc \bot \plra \bot$                                                                          & D4. & $\dr 0 \plra \bot$                                                                                                         \\
%D5.   & $\textcolor{red}{\dc \top \plra \top}$                                              & D6. &   $\textcolor{red}{\dr 1 \plra \top}$                                                                              \\
%D5. &   $\dr 1 \plra \top$                                                                               &        &                                                                                                                                   \\

 & & & \\

         & Axiom schemas for $\bc$ and $\br$                                                             &       &                                                                                                                                     \\
B1.   &  $(\alpha\ror\beta) \bc A \plra \alpha \bc A \land \beta \bc A$         & B4. & $(\alpha\ror\beta) \br \gamma \plra \alpha \br \gamma \land \beta \br \gamma$          \\
B2.   & $0 \bc A$                                                                                          & B5. & $0 \br \alpha$                                                                                                              \\
B3.   & $\alpha \bc \beta \bc A \pra \alpha\cdot\beta \bc A$                         &  B6. & $\alpha \br (\beta \rand \gamma) \plra \alpha \br \beta \land \alpha \br \gamma$ \\
         &                                                                                                          & B7. & $\alpha \br 1$                                                                                                              \\
%         &                                                                                                          & B8. & $\alpha \br \alpha$                                                                                                      \\

          & & & \\
         & Interaction axiom schemas                                                                           &       &                                                                                                                                     \\
%BD1. & $\alpha \bc (A \lor B) \pra \alpha \bc A \lor (\dr \alpha \land \dc B)$ &       %&                                                                                                                                     %\\
BD1. & $\dr \alpha \land \alpha \bc A \pra \dc A$                                         &       &                                                                                                                                      \\
BD2. & $\alpha \br \beta \pra \alpha \bc \dr \beta$                                       &       &                                                                                                                                      \\
\end{tabular}
\end{center}

\medskip

\noindent and closed under modus ponens, uniform substitution and the following rules:
\begin{center}
	\begin{tabular}{cccc}
		\AX$\alpha \fCenter \beta$
		\RightLabel{MF}
		\UI$\alpha\cdot \gamma  \fCenter \beta \cdot \gamma$
		\DisplayProof
&

			\AX$A \fCenter B$
		\RightLabel{MB}
		\UI$\alpha \bc A \fCenter \alpha \bc B$
		\DisplayProof
	   &
	  \AX$A \fCenter B$
	  \RightLabel{MD}
	  \UI$\dc A \fCenter \dc B$
	  \DisplayProof
	   & 	\AX$\alpha \fCenter \beta$
		\RightLabel{MB'}
		\UI$\gamma \br \alpha \fCenter \gamma \br \beta$
		\DisplayProof\\
	 &  & \\
\AX$\alpha \fCenter \beta$
		\RightLabel{MF'}
		\UI$\gamma\cdot \alpha  \fCenter \gamma\cdot \beta $
		\DisplayProof
&
	\AX$\alpha \fCenter \beta$
	\RightLabel{AB}
	\UI$\beta \bc A \fCenter \alpha \bc A$
	  \DisplayProof
	 &
	\AX$\alpha \fCenter \beta$
	\RightLabel{MD'}
	  \UI$\dr \alpha \fCenter \dr\beta$
	  \DisplayProof
	  &
	\AX$\alpha \fCenter \beta$
		\RightLabel{AB'}
		\UI$\beta \br \gamma \fCenter \alpha \br \gamma$
		\DisplayProof\\
         \\
	  \end{tabular}
\end{center}
Finally, for all $A, B\in \mathcal{L}$, we let   $A\vdash_{\mathrm{LRC}}B$ iff a proof of $B$ exists in H.LRC which possibly uses $A$.

\medskip

Let us expand on the intuitive meaning of the connectives, axioms and rules introduced above, and their formal properties.
\paragraph{The pure-resource fragment of LRC.} The pure-resource fragment of the logic LRC is  %similar to the logic of resources introduced e.g.~in  \cite{pym2006resources, pym2004possible}, which in turn are
inspired by  (distributive) linear logic.\footnote{However, the conceptual distinction is worth being stressed that, while formulas in linear logic {\em behave like} resources, pure-resource terms of LRC {\em literally denote} resources. In this respect, the pure-resource fragment of LRC is similar to the logic of resources introduced in  \cite{pym2006resources, pym2004possible}. %Hence, e.g.\ truth and falsity are  applicable to the former but not to the latter.
}
Indeed, as is witnessed by conditions R1-R4 and rules MF and MF', the algebraic behaviour of $\rand$ (with unit 1), $\ror$ (with unit 0) and $\cdot$ (with unit $1$) is that of the  additive conjunction,  additive disjunction and multiplicative conjunction in (distributive) linear logic, respectively. The intuitive understanding of the difference between $\alpha\cdot \beta$ and $\alpha\rand\beta$ is also borrowed from linear logic (cf.\ \cite[Section 1.1.2]{girard1995linear}): indeed, $\alpha\cdot \beta$ can be intuitively understood as the resource obtained by {\em putting} $\alpha$ and $\beta$ {\em together}. This `putting resources together' can be interpreted in many ways in different contexts: one of them is e.g.\ when  $\alpha$ (water) and $\beta$ (flour) are mixed together to obtain $\alpha\cdot \beta$ (dough); another is e.g.\ when  $\alpha$ (water) and $\beta$ (flour), juxtaposed in separate jars, are  %of $\alpha$ and $\beta$, like e.g.\ when water and flour, in separate jars, are
used at the same time so to form the counterweight $\alpha\cdot\beta$ to keep something in balance. Notice that under both interpretations, $\alpha\cdot \alpha$ is distinct from $\alpha$. We understand $\alpha\rand \beta$ as the resource which is as powerful as  $\alpha$ and  $\beta$ taken {\em separately}. % the power of which (to bring about states of affairs) is exactly that of $\alpha$ and of $\beta$ taken {\em separately}.
In other words, if we identify any resource $\gamma$ with the (upward-closed) set of the states of affairs which can be brought about using $\gamma$ (for brevity let us call such set the {\em power} of $\gamma$), then the resource $\alpha\rand \beta$ is uniquely identified by % the set of the states of affairs which can be brought about using $\alpha\rand\beta$, which is exactly
the {\em union} of the power of $\alpha$  and the power of $\beta$.  %\marginnote{do you have a less ridiculous example?}
Finally, we understand $\alpha\ror \beta$ as the resource the power of which  is the intersection of the power of $\alpha$ and the power of $\beta$.  %{\em have in common}. In other words, $\alpha\ror \beta$ is uniquely identified by the set of what can be brought about using $\alpha\ror\beta$, which is exactly the {\em intersection} of  what can be obtained using $\alpha$ alone and what can be obtained  using $\beta$ alone.
%$\alpha\ror\beta$ indicates a resource which can either coincide with $\alpha$ or with $\beta$, and hence can be attributed only the power (e.g.\ to bring about states of affairs) that $\alpha$ and $\beta$ {\em have in common}.
More in general,  the intended meaning of the resource-type entailment $\alpha\vdash \beta$   (namely `$\alpha$ is at least as powerful a resource as $\beta$'), together with the identification of the lattice of resources with the lattice of their powers (which is a lattice of  sets closed under union and intersection and hence distributive), %each resource $\alpha$ with its power, %the (upward-closed) set of the states of affair which can be brought about using $\alpha$,
explain intuitively the validity of resource-type entailments such as $\alpha\rand\alpha\dashv\vdash \alpha$, $\alpha\ror\alpha\dashv\vdash \alpha$, $\alpha\vdash \alpha\ror\beta$ and $\beta\vdash \alpha\ror\beta$, as well as $\alpha\rand(\beta\ror\gamma)\vdash (\alpha\rand\beta)\ror (\alpha\rand\gamma)$ and $(\alpha\ror\beta)\rand (\alpha\ror\gamma)\vdash \alpha\ror(\beta\rand\gamma)$. Moreover, under this reading of $\vdash$, by R3, the bottom $0$ and top $1$ of the lattice of resources can respectively be understood as the resource that is at least as powerful as any other resource (hence $0$ is impossibly powerful), and the resource  any other resource, no matter how weak, is at least as powerful as (hence $1$ is the resource with no power, or the {\em empty resource}). This intuition, together with the uniqueness of the neutral element, also justifies one of the main differences between this setting and general linear logic; namely, the fact that the unit of $\cdot$ is the  unit of $\rand$. Indeed, it seems intuitively plausible that, under the most common interpretations of $\cdot$, putting together  (e.g.~mixing or juxtaposing)  the empty resource  and any  resource $\alpha$  yields  $\alpha$ as  outcome.\footnote{\label{footnote:composition} This is one of the main differences between actions and resources: the idle action \texttt{skip}, represented as the identity relation, is the unit of the product operation on actions, and is clearly different from the top element in the lattice of actions (the total relation).}
Our inability to distinguish between the units of $\rand$ and  of $\cdot$ yields as a consequence that the following entailments hold, which are also valid in linear affine  logic \cite{onokomori, kopylov1995decidability} \begin{equation}
\label{eq:weakening}\alpha\cdot \beta\vdash \alpha\quad\mbox{ and }\quad\alpha\cdot \beta\vdash \beta.\end{equation} Indeed, by R3, R2 and MF', $\alpha\cdot \beta\vdash \alpha\cdot 1\vdash \alpha$, and the second entailment goes likewise. This  restricts  the scope of applications  of the present setting: for instance, the fact that the compound resource $\alpha\cdot\beta$ must be at least as powerful as its two components rules out the general examples of e.g.\ those chemical reactions in which the compound and its components are resources of incomparable power. On the other hand, it includes the case of all resources which can be quantified: two 50 euros bills are at least as powerful a resource than each  50 euros bill; two hours of time are at least as powerful a resource than one hour time,  and so on. %However, %this is not the case for all resources. Indeed, if $\alpha\cdot \beta$ denotes the product of two chemical components (e.g.\ water and alcohol in solution), then $\alpha\cdot\beta$ is in general neither more nor less powerful than its components. However, for the sake of the present paper we choose to restrict our setting to resources for which conditions R3 hold.
Moreover, this restriction does not rule out the possibility that the power of $\alpha\cdot\beta$ be strictly {\em greater} than the union of the separate powers of $\alpha$ and $\beta$ (which is the power of $\alpha\rand\beta$). This is  the case for instance when a critical mass of  fuel  is needed for reaching a certain temperature, or a certain outcome (e.g.\ a nuclear chain reaction). Another difference between the pure-resource fragment of LRC and linear logic is that, in LRC, the connective $\cdot$ is not necessarily commutative.  %The intuitive understanding of the  $\alpha$ and $$   %whereas $\cdot$  the multiplicative conjunction like Algebraically, they  say that  the algebra of resources is a quantale, in which the join is $\ror$ and the non necessarily commutative product is $\cdot$.

%\medskip

\paragraph{The modal operators.} The intended meaning of the formulas $\dc A$ and $\dr\alpha$ is `the agent is able to bring about state of affairs $A$' and `the agent is in possession of resource $\alpha$', respectively. %In general, the formula does not provide any information on whether $\alpha$ needs to be used up exhaustively or not. Also, the general interpretation of this formula does not imply that the resource has been actually used to bring about $A$. This allows to interpret the formulas $\alpha \dc \top$ as `the agent is in possession of the resource $\alpha$ and can use it.'
%In some situations (cf.\ case study of Section \ref{ssec:gifts}), the difference between being in possession of a resource and being in a position to {\em use} that resource becomes of the essence. In these situations, one might add a new connective $\Diamond: \mathsf{Res}\to \mathcal{L}$, so that the intended meaning of $\Diamond\alpha$ is `the agent is in possession of resource $\alpha$', with additional axiom $\alpha\dc\top\rightarrow \Diamond \alpha$.
By axioms D1 and D2 (resp.\ D3 and D4), the connective $\dc$ (resp.\ $\dr$) is a normal  diamond-type connective  (i.e.\ its algebraic interpretation is finitely join-preserving). %By axioms D4 and D6, the connective $\dc$ is a normal diamond-type connective in its first coordinate (i.e.\ its algebraic interpretation is finitely join-preserving in that coordinate), when the second coordinate is fixed to $\top$.
%The equivalence expressed by D1 %is based on the disjunction property: namely,
Axiom D1 expresses that being able to bring about $A\vee B$ is tantamount to either being able to bring about $A$ or being able to bring about $B$. Axiom D2 encodes the fact that  the agent can never bring about logical contradictions. Analogously, Axiom D3 %is also based on a form of disjunction property; indeed, it %
says that the agent is  in possession of  $\alpha\ror \beta$ exactly in case is in possession of $\alpha$ or is in possession of $\beta$. Axiom D4 encodes the fact that the agent is never in possession of the `impossibly powerful resource' $0$. %Axiom D5 is based on a form of disjunction property: if the agent is either in possession of $\alpha$ or of $\beta$ and can use the resource it has to bring about $A$, then it has to be either the case that it is in possession of $\alpha$, using which it can bring about $A$, or it is the case that the agent is in possession of $\beta$, using which it can again bring about $A$. However, the converse direction is arguably not valid. Indeed, assume that $\alpha\dc A$ is true but $\beta\dc A$ is false. Then $\alpha\dc A \vee \beta\dc A$ is true. However, to claim that $\alpha \ror\beta\dc A$ is true, we would need to guarantee that also in case $\alpha\ror\beta$ is instantiated as  $\beta$, the agent  can bring about $A$, and we cannot, given the assumptions. Axiom D6 also encodes a form of disjunction property for resources: the agent is in possession of $\alpha\ror\beta$ iff  it is either in  possession of $\alpha$ or in possession of $\beta$.

The intended meaning of the formula $\alpha \bc A$ is `whenever  resource $\alpha$ is in possession of the agent, using $\alpha$ the agent is capable to bring about $A$'. %Hence, this formula is always true if $\dr\alpha$ is false.\marginnote{I don't think we can derive this}
By axioms B1 and B2, the connective $\bc$ is an antitone normal box-type operator in the first coordinate (i.e.\ its algebraic interpretation is finitely join-reversing in that coordinate).   Axiom B1 says that the agent is capable of bringing about $A$ whenever in possession of $\alpha\ror\beta$ iff the agent is capable of bringing about $A$ {\em both} whenever  in possession of $\alpha$ {\em and}  whenever  in possession of $\beta$. Axiom B2 means that if the agent were in possession of the impossibly powerful resource (which is never the case by D4), the agent could bring about any state of affairs. The justification of axiom B3 is connected with the constraint, encoded in \eqref{eq:weakening}, that the fusion $\alpha\cdot \beta$ of two resources is at least as powerful as each of its components. Taking this fact into account, let us assume that the agent is in possession of $\alpha\cdot\beta$. Hence, by \eqref{eq:weakening}, the resource in its possession is at least as powerful as the resources $\alpha$ and $\beta$ taken in isolation. If $\alpha\bc \beta\bc A$ is the case, then by using $\alpha\cdot \beta$ up to $\alpha$, the agent can bring about $\beta\bc A$, and by using the remainder of $\alpha\cdot \beta$, the agent can bring about $A$, which motivates B3.
However, the converse direction is arguably not valid. Indeed, let $\alpha\cdot\beta\bc A$ express the fact that a certain temperature is reached by burning a critical mass $\alpha\cdot\beta$ of fuel. However, burning $\alpha$ and then $\beta$ in sequence might not be enough to reach the same temperature.\footnote{There is a surface similarity between  B3 and Axiom Ac4 of \cite[Section 4]{van1994logic}, which captures the interaction between  the capabilities of agents to perform actions and composition of actions; however, as remarked in Footnote \ref{footnote:composition}, composition of actions behaves differently from composition of resources, which is why B3 is an implication and not a bi-implication. }

The intended meaning of the formula $\alpha \br \beta$ is `the agent is capable of getting $\beta$ from $\alpha$, whenever in possession of $\alpha$'. By axioms B4 and B5, the connective $\br$ is an antitone normal box-type operator in the first coordinate (i.e.\ its algebraic interpretation is finitely join-reversing in that coordinate).  Axiom B4 says that the agent is capable of getting resource $\gamma$ whenever in possession of $\alpha\ror\beta$ iff the agent is capable of getting resource $\gamma$ {\em both} whenever  in possession of $\alpha$ {\em and}  whenever  in possession of $\beta$. Axiom B5 says that if the agent were in possession of the impossibly powerful resource (which is never the case by D4), the agent could get any  resource. By axioms B6 and B7, the connective $\br$ is a monotone normal box-type operator in the second coordinate (i.e.\ its algebraic interpretation is finitely meet-preserving in that coordinate). Axiom B6 says that the agent is capable of getting resource $\beta\rand\gamma$ whenever in possession of $\alpha$ iff the agent is capable of getting {\em both} $\beta$ and $\gamma$   whenever  in possession of $\alpha$. Axiom B7 says that any agent is capable to get  the empty resource whenever in possession of any resource. %\marginnote{(but notice that $\dr 1$ is not an axiom)} %and axiom B8 means that any agent is capable to get  any resource in its possession.\marginnote{are we sure we want to have this? It might be not true, for instance, with time it might be not the case that because you have it you keep it...}

%Axiom BD1 is  based on the disjunction property: indeed, assume that $\alpha \bc (A \lor B)$  is the case  but $\alpha \bc A$ is not; the latter condition implies that $\alpha$ is in possession of the agent (hence $\dr \alpha$ is the case), who however is not capable of bringing about $A$. But then, by $\alpha \bc (A \lor B)$, the agent must be capable of bringing about $B$. Hence $\dc B$ is the case. %The converse direction follows as a theorem (see below).
%Using BD1, BD3 and RB, it is not difficult to show that $\bc$ is a regular diamond-type operator in its second coordinate (i.e.\ its algebraic interpretation  preserves finite nonempty joins in that coordinate).
Axiom BD1 encodes  the link between the agent's capabilities and  abilities: indeed, it expresses the fact that if the agent is capable to bring about $A$ whenever in possession of $\alpha$ ($\alpha\bc A$), and moreover the agent is actually in possession of $\alpha$ ($\dr\alpha$), then the agent is able to bring about $A$ ($\dc A$). %The converse direction follows as a theorem (see below).
Notice also the analogy between this axiom and the intuitionistic axiom $A\wedge (A\rightarrow B)\leftrightarrow A\wedge B$.
Axiom BD2 establishes a link between $\bc$ and $\br$, via $\dr$; indeed, it says that the agent's being capable to get $\beta$ implies that the agent is capable to bring about a state of affairs in which the agent is in possession of $\beta$.  %says that if the agent is able to bring about  $A$ using the resource $\alpha$ then this is not by chance: indeed, it must be capable to bring about $A$ whenever  is given $\alpha$.

The rules MB and AB (resp.\ MB' and AB') encode the fact that $\bc$ (resp.\ $\br$) is monotone  in its second  coordinate and antitone in its first. In fact, AB, MB' and AB' can be derived using  B1,  B4 and B6. %which follows from  the fact, already discussed above, that $\bc$ and $\br$ are finitely meet-preserving (resp.\ join-reversing) in their second  coordinate and finitely join-reversing in their first.
%The rule RB expresses the fact that  $\bc$ is monotone in its second coordinate.
 The monotonicity of $\bc$ in its second coordinate expresses the intuition that if the agent is capable, whenever in possession of $\alpha$, to bring about $A$, then  is capable to bring about any state of affairs which is logically implied by $A$. The remaining rules encode the monotonicity of $\dc$, $\dr$ and $\cdot$.
%\marginnote{are we happy with this, or shall I try and justify why being in possession of a resource $\alpha$ implies being in possession of all the resources weaker than $\alpha$? it might not be very intuitive, but maybe if we rephrase the interpretation it becomes more natural...} %Moreover, if an agent is in possession of $\alpha$ and
%The rule RD' encodes the closure of $\dc$ under substitution of equivalents in its first coordinate. Indeed, $\dc$ is neither monotone nor antitone in its first coordinate: if $\alpha\leq \beta$ and $\alpha\dc A$ then we cannot conclude that  less powerful resources will be enough for the agent to bring about  $A$. Likewise, if $\beta\dc A$, then we cannot conclude that the agent is in possession of more powerful resources than $\beta$, although if this was the case, the agent could bring about $A$.

%The converse of BD2 follows from BD3 and RD; the converse of BD1 follows from BD3 and RB.

%To justify RDB', assume that $\alpha$ is more powerful than $\beta$ and the agent is in possession of $\beta$ and using $\beta$ it can bring about $A$. Then, whenever

%\medskip

\paragraph{Some additional axioms.} We conclude the present discussion by mentioning some analytic axioms which %although generally plausible, are not theorems of the present framework, and
might perhaps be interesting for different settings. We start mentioning  $\dc \top$, $\dr 1$, and  $\alpha\bc \top$, respectively stating that the agent is able to bring about what is always the case, such as logical tautologies; the agent is  in possession of the empty resource;   the agent is   capable of using any resource (hence also the empty one) to  bring about what is always the case. We also mention $\alpha\br\alpha$, stating that the agent is capable to get  any resource already in the possession of the agent; $\dr \alpha\land \alpha\br \beta \pra \dr \beta$, and $\dr \alpha\land \alpha\br \beta \pra \dc \dr \beta$. The latter is a consequence of BD1 and BD2, while the former is used in the case study  %We did not need them  in treating the case studies
in Section \ref{ssec:resilience}. For the sake of achieving greater generality we chose not to include it in the general system. %\marginnote{find a motivation why we did not include it}
Axioms which might also be considered in special settings are $\alpha\bc (A\lor B)\rightarrow \alpha\bc A\lor \alpha\bc B$, and $\alpha\bc A\land \alpha\bc B\rightarrow \alpha\bc (A\land B)$. The first one %is a variant of BD1, and
would imply the distributivity of $\bc$ over disjunction in its second coordinate. %however, this axiom seems to be less general than BD1 in the presence of the disjunction property. Namely, the truth of $\alpha\bc A\lor \alpha\bc B$ requires deciding which of $\alpha\bc A$ and $\alpha\bc B$ is the case. Hence,  the intuitive meaning of $\alpha\bc (A\lor B)$ only requires that the agent chooses which state of affairs to bring about after being in possession of $\alpha$ and not before, which  is a less restrictive requirement than the one for the satisfaction of $\alpha\bc A\lor \alpha\bc B$.
The axiom $\alpha\bc A\land \alpha\bc B\rightarrow \alpha\bc (A\land B)$ is not applicable in general, given that the consequence would require the duplication of the resource $\alpha$. More generally applicable variants are $\alpha\bc A\land \alpha\bc B\rightarrow \alpha\cdot \alpha\bc (A\land B)$ and $\alpha\br \beta\land \alpha\br \gamma\rightarrow \alpha\cdot \alpha\br (\beta\cdot \gamma)$. The latter encodes the behaviour of scalable resources, and will be used in the case study of Sections \ref{ssec:crow} and \ref{ssec:resilience}. Another interesting axiom is the converse of B3, which we have discussed above. %\marginnote{there was a story about scalability, in the case studies. Shall I move it here, or shall I only mention this discussion?}
%Some of these axioms and also additional rules will be discussed further in Section \ref{sec:casestudies}, when we will make additions to the basic system to treat specific situations.
% Finally, we wish to stress that being able to rely on the basic properties (soundness, completeness, cut elimination, subformula property, conservativity) of D.LRC being stable under the addition of analytic structural rules is a remarkable and useful feature of the present system, which has been hardwired in its design.

%\marginnote{The comment on why $\cdot$ is total and not partial is still to be added. Shall we mention some of the analytic rules these axioms correspond to?}
%%%
\subsection{Algebraic completeness} %\marginnote{A: edited. Please check that the statements I make are correct}
\label{ssec:algebraic completeness}
In the present section we outline the completeness of LRC w.r.t.\ the  heterogeneous LRC-algebras\footnote{This notion specializes  the more general notion of heterogeneous algebras introduced in \cite{birkhoff1970heterogeneous} to the setting of interest of the present paper.} defined below, via a Lindenbaum-Tarski type construction.

\begin{definition}
	\label{def:LRC-algebraic structure}
	A {\em heterogeneous} LRC-{\em algebra} is a tuple $F=(\bbA,\bbQ,\bc, \dc, \br, \dr)$ such that $\bbA$ is a Heyting algebra, $\bbQ = (Q, \ror, \rand, \cdot, 0, 1)$ is a  bounded distributive lattice with binary operator $\cdot$ which preserves finite joins in each coordinate and the unit of which is $1$,\footnote{It immediately follows from the definition that $\alpha\cdot\beta\leq \alpha$ and $\alpha\cdot\beta\leq \beta$ for all $\alpha, \beta \in Q$.
} %such that $\alpha\cdot\beta\leq \alpha$ and $\alpha\cdot\beta\leq \beta$ for all $\alpha, \beta \in Q$,
and $\bc: \bbQ\times \bbA\to \bbA$, $\dc:\bbA\to\bbA$, $\br:\bbQ\times \bbQ\to\bbA$, $\dr:\bbQ\to\bbA$ verify the (quasi-)inequalities corresponding to the axioms and rules of LRC as presented in the previous section. A heterogeneous LRC-algebra   is {\em perfect} if both $\bbA$ and $\bbQ$ are perfect,\footnote{\label{footnote: perfect HA} A bounded distributive lattice (BDL) is {\em perfect} if it is complete, completely distributive and completely join-generated by its completely join-irreducible elements. A BDL is perfect iff it is isomorphic to the lattice of the upward-closed subsets of some poset. A  Heyting algebra is {\em perfect} if its lattice reduct is a perfect BDL. A bounded distributive lattice with operators (abbreviated DLO. {\em Operators} are additional operations which are finitely join-preserving in each coordinate) is {\em perfect} if its lattice reduct is a perfect BDL, and  each operator is completely join-preserving in each coordinate.} and the operations $\bc$, $\dc$, $\br$, and $\dr$ satisfy the infinitary versions of the join- and meet-preservation properties satisfied by definition in any heterogeneous LRC-algebra.
An  {\em algebraic} LRC-{\em model} is a tuple $\bbM: = (F, v_{\mathsf{Fm}}, v_{\mathsf{Res}})$ such that $F$ is a heterogeneous LRC-algebra,  $v_{\mathsf{Fm}}: \mathsf{AtProp}\to \bbA$ and $v_{\mathsf{Res}}: \mathsf{AtRes}\to \bbQ$. Clearly, for every algebraic LRC-model $\bbM$, the assignments $v_{\mathsf{Fm}}$ and $v_{\mathsf{Res}}$ have unique homomorphic extensions which we identify with $v_{\mathsf{Fm}}$ and $v_{\mathsf{Res}}$ respectively.  For each $\mathsf{T}\in\{\mathsf{Fm}, \mathsf{Res}\}$  and all terms $a, b$ of type $\mathsf{T}$, we let $a\models_{\mathrm{LRC}} b$ iff $v_{\mathsf{T}}(a)\leq v_{\mathsf{T}}(b)$ for every model $\bbM$.
\end{definition}
Given $\mathsf{AtProp}$ and $\mathsf{AtRes}$, the {\em Lindenbaum-Tarski heterogeneous LRC-algebra} over $\mathsf{AtProp}$ and  $\mathsf{AtRes}$ is defined to be the following structure: $$F^\star:=(\mathbb{A}^\star,\bbQ^\star,\bc^{\!\!\!\star},\dc^{\!\star},\br^{\!\!\!\star},\dr^{\!\star})$$ where:
\begin{enumerate}
	\item $\mathbb{A}^\star$ is the quotient algebra $\mathsf{Fm}/{\dashv\vdash}$, where $\mathsf{Fm}$ is the formula algebra corresponding to the language $\mathcal{L}$ defined in the previous subsection, and     $\dashv\vdash$ is the equivalence relation on $\mathsf{Fm}$ defined as $A\dashv\vdash A'$ iff $A\vdash A'$ and $A'\vdash A$. Notice that the rules MD, MB,AB, MD', MB' and AB' guarantee that $\dashv\vdash$ is compatible with $\dc$, $\bc$, $\dr$ and $\br$, hence the quotient algebra construction is well defined. The elements of $\mathbb{A}^\star$ will be typically denoted $[B]$ for some formula $B\in \mathcal{L}$;
	\item $\bbQ^\star$ is the quotient algebra $\mathsf{Res}/{\dashv\vdash}$, where $\mathsf{Res}$ is the resource algebra corresponding to the language $\mathcal{R}$ defined in the previous subsection, and     $\dashv\vdash$ is the equivalence relation on $\mathsf{Res}$ defined as $\alpha\dashv\vdash \alpha'$ iff $\alpha\vdash \alpha'$ and $\alpha'\vdash \alpha$. Notice that the rules MF and MF' guarantee that  $\dashv\vdash$ is compatible with $\cdot$, hence the quotient algebra construction is well defined. The elements of $\bbQ^\star$ will be typically denoted $[\alpha]$ for some resource $\alpha\in \mathcal{R}$;
\begin{comment}
	\item $\bbQ^\star$ is defined as follows: for every  formula $A$, let $\mathsf{Res}(A)$ denote the set of resource terms occurring in $A$. Let $M$ be the (finite) poset of strings of elements in $\mathsf{AtRes}\cap\mathsf{Res}(A)$ %\marginnote{check that $\mathsf{Res}(A)$ is defined as intended. A: I think so, any other opinion?}
of length smaller than or equal to $n$, where $n$ can be taken as the number of symbols in $A$. The partial order on $M$ is defined as $s\leq t$ iff $t$ is a substring of $s$. Let $Q^\star$ be the poset of downsets of $M$, ordered by inclusion.  This poset is finite, has a natural structure of distributive lattice, and can be endowed with a monoidal structure by letting $D_1\cdot D_2:=\{s\cdot t \mid s\in D_1, t\in D_2, s\cdot t\in M\}$ for each $D_1$ and $D_2$ in $Q^\star$. Then by construction, $\cdot$ is associative and distributes over $\cup$ in each coordinate. Moreover,  it is very easy to verify that  $M\in Q^\star$ is both the top of $Q^\star$ as a distributive lattice, and the unit of the product $\cdot$.
\end{comment}
	\item $\bc^{\!\!\!\star}:Q^\star\times\mathbb{A}^\star\to\mathbb{A}^\star$ is defined as $[\alpha] \bc^{\!\!\!\star} [B]:=[\alpha\bc B]$;
	\item $\dc^{\!\star}:\mathbb{A}^\star\to\mathbb{A}^\star$ is defined as $\dc^{\!\star}[B]:=[\dc B]$;
	\item $\br^{\!\!\!\star}:Q^\star\times Q^\star\to\mathbb{A}^\star$ is defined as $[\alpha_1]\br^{\!\!\!\star}[\alpha_2]:= [\alpha_1\br\alpha_2]$;
	\item  $\dr^{\!\star}:Q^\star\to\mathbb{A}^\star$ is defined as $\dr^{\!\star}[\alpha]:=[\dr\alpha]$;
\end{enumerate}
%where $\ror D$ abbreviates  $d_1\ror\cdots\ror d_n$ for every $D= \{d_1,\ldots, d_n\}\in Q^\star$.
\begin{lemma}\label{lem:canonicalmodel}
	For any $\mathsf{AtProp}$ and $\mathsf{AtRes}$, $F^\star$ is a heterogeneous  LRC-algebra.
\end{lemma}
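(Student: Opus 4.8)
The plan is to carry out the usual Lindenbaum--Tarski verification, relying throughout on the fact that, by construction of the quotients, for each type $\mathsf{T}\in\{\mathsf{Fm},\mathsf{Res}\}$ and all terms $a,b$ of type $\mathsf{T}$ the induced partial order satisfies $[a]\leq[b]$ iff $a\vdash_{\mathrm{LRC}}b$; in other words, the lattice orders of $\mathbb{A}^\star$ and $\bbQ^\star$ are provable entailment. Granting this, the claim decomposes into four tasks: (i) $\mathbb{A}^\star$ is a Heyting algebra; (ii) $\bbQ^\star$ is a bounded distributive lattice equipped with a binary operator $\cdot$ with unit $1$ preserving finite joins in each coordinate; (iii) the heterogeneous operations $\bc^{\!\!\!\star},\dc^{\!\star},\br^{\!\!\!\star},\dr^{\!\star}$ are well defined on the quotients; and (iv) each axiom and rule of H.LRC translates into the corresponding (quasi-)inequality in $F^\star$.

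For (i), since H.LRC extends intuitionistic propositional logic and is closed under modus ponens and uniform substitution, the connectives $\wedge,\vee,\top,\bot$ induce a bounded distributive lattice structure on $\mathsf{Fm}/{\dashv\vdash}$, and $\to$ induces the relative pseudocomplement, i.e.\ $[A]\wedge[C]\leq[B]$ iff $[C]\leq[A\to B]$, which is precisely the algebraic form of the intuitionistic deduction theorem. For (ii), R1 and R3 make $\ror,\rand$ into the bounded-distributive-lattice operations on $\mathsf{Res}/{\dashv\vdash}$, with $[0]$ the bottom and $[1]$ the top, and R2 makes $\cdot$ associative with unit $[1]$. Preservation of binary joins by $\cdot$ in each coordinate follows from R4 for one inclusion and from monotonicity (rules MF, MF$'$ applied to $\beta,\gamma\vdash\beta\ror\gamma$) for the converse; preservation of the empty join amounts to $\alpha\cdot 0\dashv\vdash 0$ and $0\cdot\alpha\dashv\vdash 0$, which hold since $0\vdash\alpha\cdot 0$ by R3 and $\alpha\cdot 0\vdash 0$ by the derived weakening entailment \eqref{eq:weakening} (and symmetrically). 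In particular $\alpha\cdot\beta\leq\alpha$ and $\alpha\cdot\beta\leq\beta$, as the definition requires.

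For (iii), $\dc^{\!\star}$ and $\dr^{\!\star}$ are well defined by MD and MD$'$ used in both directions; $\bc^{\!\!\!\star}$ is well defined because $\alpha\dashv\vdash\alpha'$ yields $\alpha\bc B\dashv\vdash\alpha'\bc B$ by two applications of AB and $B\dashv\vdash B'$ yields $\alpha\bc B\dashv\vdash\alpha\bc B'$ by two applications of MB, and likewise $\br^{\!\!\!\star}$ is well defined using AB$'$ in the first coordinate and MB$'$ in the second. For (iv), unwinding the definitions of the heterogeneous operations together with the order characterization turns each bi-implication axiom into an equality and each implication axiom into an inequality: D1--D4 say $\dc^{\!\star},\dr^{\!\star}$ preserve finite (including empty) joins; B1, B2, B4, B5 say $\bc^{\!\!\!\star},\br^{\!\!\!\star}$ send finite joins in the first coordinate to finite meets; B6, B7 say $\br^{\!\!\!\star}$ preserves finite meets in the second coordinate; B3 gives $\alpha\bc^{\!\!\!\star}(\beta\bc^{\!\!\!\star}A)\leq(\alpha\cdot\beta)\bc^{\!\!\!\star}A$; and BD1, BD2 give the two interaction inequalities. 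The monotonicity rules themselves are among the required quasi-inequalities and now hold by the congruence properties established in (iii).

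I expect no substantial obstacle: the work is essentially bookkeeping. The two places calling for a little care are, in (ii), remembering that finite-join preservation of $\cdot$ includes the nullary case and therefore rests on \eqref{eq:weakening} (equivalently on R2, R3 and MF$'$) rather than on R1--R4 alone; and, in (iii)--(iv), keeping track of which equivalence relation --- $\dashv\vdash$ on $\mathsf{Fm}$ versus on $\mathsf{Res}$ --- sits in each coordinate of the heterogeneous operations, and matching bi-implication axioms to equalities and implication axioms to inequalities.
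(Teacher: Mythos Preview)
Your proposal is correct and follows exactly the same approach as the paper, which simply records that ``it is a standard verification'' that $\mathbb{A}^\star$ is a Heyting algebra, that $\bbQ^\star$ is a bounded distributive lattice with operator $\cdot$ of the required kind, and that the heterogeneous operations are well defined and satisfy the additional conditions ``by construction''. Your (i)--(iv) decomposition, and in particular the care you take with the nullary case of join-preservation of $\cdot$ and with matching the right congruence to each coordinate, is precisely the content the paper leaves implicit.
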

\begin{proof}
	It is a standard verification that $\bbA^\star$ is a Heyting algebra and that $\bbQ^\star$ is a bounded distributive lattice with binary operator $\cdot$ which preserves finite joins in each coordinate and the unit of which is $1$.  It is also an easy verification that    $\bc^{\!\!\!\star}$, $\dc^{\!\star}$, $\br^{\!\!\!\star}$ and $\dr^{\!\star}$ are well-defined, and verify the additional conditions by construction.
	%As discussed above,  $(Q^\star, \cap, \cup, \varnothing, M, \cdot)$ is such that $(Q^\star, \cap, \cup, \varnothing, M,)$ is a finite distributive lattice, and $(Q^\star, \cup, \varnothing, M, \cdot)$ is a quantale. It is also an easy verification that    $\bc^{\!\!\!\star}$, $\dc^{\!\star}$, $\br^{\!\!\!\star}$ and $\dr^{\!\star}$ are well-defined, and verify the additional conditions by construction.
\end{proof}

The canonical assignments can be defined as usual, i.e.\ mapping atomic propositions and resources to their canonical value in $F^\star$.
Let $\bbM^*$ be the resulting LRC-algebraic model. With this definition, the proof of the following proposition is routine, and is omitted.
\begin{prop}
\label{prop:algebraic completeness}
For all $X\subseteq \mathcal{L}$ and $A\in\mathcal{L}$, if $X\not\vdash_{\mathrm{LRC}} A$, then $X\not \models _{\mathrm{LRC}} A$.
\end{prop}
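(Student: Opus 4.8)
The plan is the standard Lindenbaum--Tarski argument, the bulk of which has already been carried out: by Lemma~\ref{lem:canonicalmodel} the structure $F^\star$ is a heterogeneous LRC-algebra, and $\bbM^*$ is the algebraic LRC-model obtained by endowing $F^\star$ with the canonical assignments $v_{\mathsf{Fm}}(p):=[p]$ for $p\in\mathsf{AtProp}$ and $v_{\mathsf{Res}}(a):=[a]$ for $a\in\mathsf{AtRes}$. All that remains is to connect provability in H.LRC with the order of $F^\star$ via a truth lemma, and then to contrapose.

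\emph{Truth lemma.} For every $\alpha\in\mathcal{R}$ and every $A\in\mathcal{L}$ we have $v_{\mathsf{Res}}(\alpha)=[\alpha]$ and $v_{\mathsf{Fm}}(A)=[A]$. This is proved by a simultaneous induction on the structure of resource-terms and of formula-terms (the two are intertwined, since formulas embed resources through $\bc$, $\dr$ and $\br$). The base cases hold by the definition of the canonical assignments; the inductive steps for the Heyting connectives and for $\cdot,\rand,\ror,0,1$ are the usual ones; and the steps for $\bc$, $\dc$, $\br$, $\dr$ are immediate from clauses~(3)--(6) in the definition of $F^\star$, which were precisely designed so that the quotient maps $\mathsf{Fm}\to\mathbb{A}^\star$ and $\mathsf{Res}\to\bbQ^\star$ coincide with the (unique) homomorphic extensions of the canonical assignments.

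Next, unfolding the definition of the quotient Heyting algebra $\mathbb{A}^\star$ (resp.\ of the quotient lattice $\bbQ^\star$) yields the dictionary $[A]\leq[B]$ iff $A\vdash_{\mathrm{LRC}}B$, for all $A,B\in\mathcal{L}$ (resp.\ $[\alpha]\leq[\beta]$ iff $\alpha\vdash_{\mathrm{LRC}}\beta$): indeed $[A]\leq[B]$ means $A\wedge B\dashv\vdash A$, and since $A\wedge B\vdash_{\mathrm{LRC}}A$ always holds this reduces to $A\vdash_{\mathrm{LRC}}B$. Now suppose $X\not\vdash_{\mathrm{LRC}}A$. Since H.LRC-proofs are finite and the deduction theorem holds (so a finite set of hypotheses may be replaced by its conjunction), we get $\bigwedge X_0\not\vdash_{\mathrm{LRC}}A$, hence $[\bigwedge X_0]\not\leq[A]$, for every finite $X_0\subseteq X$. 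By the truth lemma and the fact that $v_{\mathsf{Fm}}$ is a homomorphism, $\bigwedge_{B\in X_0}v_{\mathsf{Fm}}(B)=[\bigwedge X_0]\not\leq[A]=v_{\mathsf{Fm}}(A)$ in the model $\bbM^*$, which exhibits a model refuting $X\vdash_{\mathrm{LRC}}A$ and so establishes $X\not\models_{\mathrm{LRC}}A$.

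As the authors observe, no step is genuinely hard. The only points requiring a little care are bookkeeping: fixing the intended reading of $X\vdash_{\mathrm{LRC}}A$ and $X\models_{\mathrm{LRC}}A$ for a \emph{set} $X$, and checking the deduction theorem --- which goes through because the congruence and monotonicity rules (MB, AB, MD, MB$'$, AB$'$, MD$'$, MF, MF$'$) are only ever applied to already-established entailments, never to auxiliary hypotheses. The one substantive ingredient, namely that $F^\star$ actually validates every axiom and rule of LRC, is exactly what Lemma~\ref{lem:canonicalmodel} provides.
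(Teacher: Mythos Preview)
Your proof is correct and follows exactly the standard Lindenbaum--Tarski route the paper has in mind: the paper itself declares the argument ``routine'' and omits it entirely, having already checked in Lemma~\ref{lem:canonicalmodel} that $F^\star$ is an LRC-algebra. You also rightly flag the only bookkeeping issues (the intended meaning of $X\vdash_{\mathrm{LRC}}A$ and $X\models_{\mathrm{LRC}}A$ for a set $X$, and the reading of the monotonicity rules as rules on theorems), which the paper leaves implicit.
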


\subsection{Algebraic canonicity}
\label{ssec:algebraic canonicity}

%The theory of canonical extensions provides a way to extract a relational semantics from the algebraic semantics via algebraic canonicity.
The present subsection is aimed at showing that LRC is strongly complete w.r.t.\ perfect heterogeneous LRC-algebras. % we  show the algebraic canonicity of the axioms and rules of LRC (see definition below), %Together with the completeness via Lindenbaum construction of the previous subsection, this implies that the logic LRC is  complete w.r.t.\ {\em perfect} algebraic LRC-models, which can be associated, via standard discrete Stone duality, with set-based structures similar to Kripke models, thus providing complete relational semantics for LRC. In the present paper, we will not pursue the specification of this relational semantics, but rather,
This will be a key ingredient in the conservativity proof of Section \ref{ssec:conservativity}. % \marginnote{how much detail do we want to give?}
\begin{definition}
	Let $F=(\bbA,\bbQ,{\bc}, {\dc},\br,\dr)$ be a heterogeneous LRC-algebra. The {\em canonical extension} of $F$ is $$F^\delta=(\bbA^\delta,\bbQ^\delta,\bcp,\dcs,\brp,\drs),$$
	where $\bbA^\delta$ and $\bbQ^\delta$ are the canonical extensions of $\bbA$ and $\bbQ$ respectively\footnote{ \label{def:can:ext}
		The \emph{canonical extension} of a BDL  $L$ is a complete distributive lattice $L^\delta$ containing $L$ as a sublattice, such that:
		\begin{enumerate}
			\item \emph{(denseness)} every element of $L^\delta$ can be expressed both as a join of meets and as a meet of joins of elements from $L$;
			\item \emph{(compactness)} for all $S,T \subseteq L$, if $\bigwedge S \leq \bigvee T$ in $L^\delta$, then $\bigwedge F \leq \bigvee G$ for some finite sets $F \subseteq S$ and $G\subseteq T$.
		\end{enumerate}
		%\end{definition}
		
		It is well known that  the canonical extension of a BDL is a \emph{perfect} BDL (cf.~Footnote \ref{footnote: perfect HA}). %i.e.\ a complete and completely distributive lattice which is completely join-generated by its completely join-irreducible elements and completely meet-generated by its completely meet-irreducible elements.
Completeness and complete distributivity  imply that each  perfect BDL is naturally endowed with a Heyting algebra structure, and hence each perfect BDL is also a perfect Heyting algebra. Moreover, if $L$ is the lattice reduct of some Heyting algebra $\bbA$, then $\bbA$ is a subalgebra of $L^\delta$, seen as a perfect Heyting algebra. The {\em canonical extension} $\bbA^\delta$ of a Heyting algebra $\bbA$ is defined as the canonical extension  of the lattice reduct of $\bbA$ endowed with its natural Heyting algebra structure. The {\em canonical extension} $\bbQ^\delta$ of a DLO $\bbQ$ is defined as the canonical extension  of the lattice reduct of $\bbQ$ endowed with the $\sigma$-extension of each additional operator. It is well known that the canonical extension of a Heyting algebra (resp.~DLO) is a perfect Heyting algebra (resp.~DLO).}, the operations $\drs: \bbQ^\delta\to \bbA^\delta$ and $\brp: \bbQ^\delta\times \bbQ^\delta\to \bbA^\delta$
	and $\dcs:\bbA^\delta\to\bbA^\delta$ and $\bcp: \bbQ^\delta\times \bbA^\delta\to \bbA^\delta$ are defined as follows: for any $k\in K(\bbA^\delta)$, $\kappa\in K(\bbQ^\delta)$ and $o\in O(\bbA^\delta)$, $\omega\in O(\bbQ^\delta)$,\footnote{For any BDL $L$, an element $k \in L^\delta$ (resp.\ $o \in L^\delta$) is \emph{closed} (resp.\ \emph{open}) if  is the meet (resp.\ join) of some subset of $L$.  The set of closed (resp.\ open) elements of $L^\delta$ is $K(L^\delta)$ (resp.\ $O(L^\delta)$). We will slightly abuse notation and write $K(\bbA^\delta)$ (resp.\ $O(\bbA^\delta)$) and $K(\bbQ^\delta)$ (resp.\ $O(\bbQ^\delta)$) to refer to the sets of closed and open elements of their lattice reducts.} %By the remark in footnote \ref{footnote:Qisalattice}, $Q$ is the canonical extension of itself. As is well-known
\begin{multline*}
	\drs \kappa: =\bigwedge\{\dr \alpha\mid \alpha\in \bbQ\mbox{ and } \kappa\leq\alpha\} \quad \kappa \brp \omega: = \bigvee\{\alpha \br \beta\mid \beta\in \bbQ\mbox{, } \beta\leq\omega\mbox{, } \alpha\in\bbQ\mbox{ and }\kappa\leq\alpha\},
\end{multline*}
\begin{multline*}
	\dcs k: = \bigwedge\{\dc a\mid a\in \bbA\mbox{ and } k\leq a\} \quad\quad \kappa \bcp o: = \bigvee\{\alpha \bc a\mid a\in \bbA\mbox{, }a\leq o\mbox{, } \alpha\in\bbQ\mbox{ and }\kappa\leq\alpha\},
\end{multline*}
	and for any $u\in \bbA^\delta$ and $q,w\in \bbQ^\delta$,
\begin{center}
\begin{tabular}{l}
$\drs q: = \bigvee\{\drs \kappa\mid \kappa\in K(\bbQ^\delta)\mbox{ and } \kappa\leq q\}$ \\
$q \brp w: = \bigwedge\{\kappa \brp \omega\mid \omega\in O(\bbQ^\delta)\mbox{, } w\leq\omega\mbox{, }\kappa\in K(\bbQ^\delta)\mbox{ and }\kappa\leq q\} \textcolor{white}{\rule[-0.05ex]{0.12ex}{4ex}}$ \\
$\dcs u: = \bigvee\{\dcs k\mid k\in K(\bbA^\delta)\mbox{ and } k\leq u\} \textcolor{white}{\rule[-0.05ex]{0.12ex}{4ex}}$ \\
$q \bcp u: = \bigwedge\{\kappa \bcp o\mid o\in O(\bbA^\delta)\mbox{, } u\leq o\mbox{, }\kappa\in K(\bbQ^\delta)\mbox{ and }\kappa\leq q\} \textcolor{white}{\rule[-0.05ex]{0.12ex}{4ex}}$. \\
\end{tabular}
\end{center}
 Below we also report the definition of  $\cdot^\sigma$ for the reader's convenience: For $\kappa_1,\kappa_2\in K(\bbQ^\delta)$ $$\kappa_1\cdot^\sigma\kappa_2 =\bigwedge\{\alpha\cdot\beta\mid\alpha,\beta\in\bbQ\mbox{ and }\kappa_1\leq\alpha,\kappa_2\leq\beta\}, $$ and for any $q_1,q_2\in\bbQ^\delta$ $$q_1\cdot^\sigma q_2=\bigvee\{\kappa_1\cdot^\sigma\kappa_2\mid\kappa_1,\kappa_2\in K(\bbQ^\delta)\mbox{ and }\kappa_1\leq q_1,\kappa_2\leq q_2\}.$$
In what follows, for the sake of readability, we will write $\cdot^\sigma$ without the superscript. This will not create ambiguities, since we use different variables to denote the elements of $\bbQ$, $K(\bbQ^\delta)$, $O(\bbQ^\delta)$ and $\bbQ^\delta$, and since $\cdot$ and $\cdot^\sigma$ coincide over $\bbQ$.
\end{definition}
\begin{lem}\label{lem:canonicity}
	 For any   heterogeneous LRC-algebra $F$, the canonical extension  $F^\delta$  is a perfect heterogeneous LRC-algebra.
\end{lem}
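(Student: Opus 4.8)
The plan is to verify, one by one, the conditions in Definition~\ref{def:LRC-algebraic structure} that make $F^\delta$ a \emph{perfect} heterogeneous LRC-algebra: (i) that $\bbA^\delta$ and $\bbQ^\delta$ are, respectively, a perfect Heyting algebra and a perfect DLO; (ii) that the extended operations $\dcs$, $\drs$, $\bcp$, $\brp$ and $\cdot^\sigma$ enjoy, in each coordinate, the infinitary (co-)normality properties stipulated there; and (iii) that $F^\delta$ still validates all the (quasi-)inequalities corresponding to the axioms and rules of LRC.

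Point (i) is immediate from the standard theory of canonical extensions recalled in Footnotes~\ref{footnote: perfect HA} and~\ref{def:can:ext}, which also gives that $\cdot^\sigma$ is completely join-preserving in each coordinate. For (ii), inspection of the defining clauses shows that $\drs$ and $\dcs$ are the $\sigma$-extensions of the operators $\dr$ and $\dc$ and hence are completely join-preserving, while the clauses for $\bcp$ and $\brp$ realize the appropriate canonical extensions of maps which, by B1--B2, B4--B5, B6--B7, are finitely join-preserving into the order dual of $\bbA$ in their box-type coordinates and finitely meet-preserving in the meet-type coordinate; by the Gehrke--Jónsson theory these extensions are completely join-reversing, resp.\ completely meet-preserving, in those coordinates, and monotone where no normality was assumed. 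In particular every extended operation restricts to the original one on $\bbA$ and $\bbQ$. This simultaneously disposes of all axioms and rules that merely restate (co-)normality or tone: R1, R3, R4, D1--D4, B1, B2, B4--B7, and the rule-(quasi-)inequalities MF, MF$'$, MB, MD, MB$'$, AB, MD$'$, AB$'$ hold in $F^\delta$ by construction (e.g.\ D2 and B2 hold because completely (co-)normal operations send the empty join to $\bot$, resp.\ $\top$).

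What remains for (iii) are the intuitionistic axioms and R2, B3, BD1, BD2. The intuitionistic axioms survive because $\bbA$ is a subalgebra of the perfect Heyting algebra $\bbA^\delta$. Associativity and the unit law for $\cdot^\sigma$ follow from the classical fact that canonical extensions preserve such equations for operators; the unit law is also easy by hand, since $1$ is the top of $\bbQ^\delta$: for closed $\kappa$, $\kappa\cdot^\sigma\top=\bigwedge\{\alpha\cdot 1\mid\alpha\in\bbQ,\ \kappa\leq\alpha\}=\kappa$, and likewise on the left, whence the law extends to all of $\bbQ^\delta$ by join-density of the closed elements. The substantive cases are B3, BD1, BD2. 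I would argue that, read in the multi-type signature, each of them is an \emph{analytic inductive} (in particular Sahlqvist-type) inequality --- for instance B3, namely $\alpha\bc(\beta\bc A)\vdash(\alpha\cdot\beta)\bc A$, has $A$ positive throughout and $\alpha,\beta$ negative, with the nesting of $\bc$, and of $\bc$ over $\cdot$, matching the inductive recursion --- and hence canonical by the general canonicity theorem for (multi-type) inductive inequalities from unified correspondence \cite{GMPTZ,MZ16,ConPalSou}. A self-contained alternative for B3: since $\kappa\bcp o$ is open whenever $\kappa$ is closed and $o$ is open, one first reduces the inequality to closed resource-arguments and open formula-arguments by denseness, then pushes it down to $\bbQ$ and $\bbA$ via compactness, where it holds by hypothesis; BD1 and BD2 are handled analogously.

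The main obstacle is exactly the canonicity of the composite interaction axioms B3, BD1, BD2; everything else is a textbook fact about canonical extensions of lattices and operators, or an automatic consequence of having taken the $\sigma$/$\pi$-extensions. The delicate feature of B3 is that it involves a genuine composition of heterogeneous operations --- $\bcp$ nested inside $\bcp$, and $\bcp$ applied to $\cdot^\sigma$ --- and since the canonical extension of a composition need not coincide with the composition of canonical extensions, the direct verification must route through the order-topological structure of $F^\delta$ (tracking which closed/open elements map to closed/open elements, plus denseness and compactness), while the quick route reduces to the syntactic check that each of the three inequalities really does fall under the analytic-inductive shape. Once this is settled, $F^\delta$ validates every axiom and rule of LRC and is built on perfect lattice reducts carrying completely (co-)normal operations, i.e.\ it is a perfect heterogeneous LRC-algebra.
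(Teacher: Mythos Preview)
Your overall architecture is right, and your ``self-contained alternative'' via closed/open elements, denseness and compactness is exactly the approach the paper takes for B3, BD1 and BD2. However, your primary ``quick route'' has a genuine gap.

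The appeal to the general canonicity theorem for analytic inductive inequalities from \cite{GMPTZ,MZ16} does not go through: the definition of (analytic) inductive inequality in that framework requires each connective to be normal --- finitely join- or meet-preserving --- in every coordinate, whereas $\bc$ is only \emph{monotone} in its second coordinate (as you yourself note under (ii)). Hence B3, BD1 and BD2 do not fall under the syntactic class covered by those results. The paper is explicit about this in its concluding discussion: ``(the multi-type version of) the definition of (analytic) inductive inequalities given in \cite{GMPTZ} does not apply to many axioms of the Hilbert-style presentation of LRC, and hence some results (e.g.\ the canonicity results of Section~\ref{ssec:algebraic canonicity}) could not be immediately inferred by directly applying the general theory.'' So the ``syntactic check that each of the three inequalities really does fall under the analytic-inductive shape'' fails, not for want of a calculation but because the shape is not in scope of the cited theorems.

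What the paper does is carry out your alternative in full. For each of B3, BD1, BD2 it first establishes the inequality for closed resource-arguments and open formula-arguments (using compactness to descend to $\bbQ$ and $\bbA$, where the axiom holds by hypothesis), and then lifts to arbitrary elements of $\bbQ^\delta$ and $\bbA^\delta$ by denseness. The lifting is not uniform across the three axioms and is where the non-normal coordinate of $\bc$ bites: for BD1 the paper needs the right adjoint $\adc$ of $\dcs$ (which exists since $\dcs$ is completely join-preserving) together with the fact that $\adc$ sends open elements to open elements, in order to control the $\dc$ on the right-hand side; BD2 analogously uses the right adjoint $\adr$ of $\drs$. Your sketch is correct in spirit but underestimates the bookkeeping these adjoints impose.
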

\begin{proof} As discussed in  Footnote \ref{def:can:ext}, $\bba^\delta$ is a perfect Heyting algebra and $\bbQ^\delta$ is a perfect DLO, so to finish the proof it is enough to show that the validity  of all axioms and rules of LRC transfers from $F$ to $F^\delta$, and moreover, the join-and meet-preservation properties of the operations of $F$ hold in their infinitary versions in $F^\delta$. Conditions R2 hold in $\bbQ^\delta$ as consequences of the general theory of canonicity of terms purely built on operators (cf.~\cite[Theorem 4.6]{gehrke1994bounded}).
	  As to D1 and D2, by assumption the operation  $\dc$  preserves finite  joins. Hence, by a well known fact of the theory of the  $\sigma$-extensions of finitely join-preserving maps, $\dcs$   preserves arbitrary  joins  (cf.\ \cite[Theorem 3.2]{gehrke1994bounded}). The same argument applies to D3, D4, B4, B5, B6, B7. Furthermore, by \cite[Lemma 2.22]{GJ04} it follows that $\bcp$  turns arbitrary joins into arbitrary meets in the first coordinate, which is the infinitary version of B1.

	As to axiom B2, it is enough to show that for every $u\in \bbA^\delta$, $$0\bcp u = \top.$$
	Let us preliminarily show the identity above for $o\in O(\bbA^\delta)$. Notice that the set $\{a\mid a \in\bbA,  a\leq o\}$ is always nonempty since $\bot$ belongs to it. Hence,
	\begin{center}
		\begin{tabular}{r c l}
			$0\bcp o$ &$=$& $\bigvee \{0\bc a\mid a\in\bbA, a\leq o\}$\\
			&$=$& $\bigvee \{\top\mid a\in\bbA, a\leq o\}$\\
			&$=$& $\top$.\\
		\end{tabular}
	\end{center}
	Hence, for arbitrary $u\in\bbA^\delta$
	\begin{center}
		\begin{tabular}{r c l}
			$0\bcp u$ &$=$& $\bigwedge \{0\bcp o\mid o\in O(\bbA^\delta), u\leq o\}$\\
			&$=$& $\bigwedge \{\top\mid o\in O(\bbA^\delta), u\leq o\}$\\
			&$=$& $\top$.\\
		\end{tabular}
	\end{center}

	%As to axiom B2, it is enough to show that for every $u\in \bbA^\delta$, $$0\bcs u = \top.$$
	%	Let us preliminarily show the identity above for $k\in K(\bbA^\delta)$. %Notice that the set $\{a\mid a \in\bbA k\leq a\}$ is always nonempty since $\top$ belongs to it. Hence,
	%	\begin{center}
	%		\begin{tabular}{r c l}
	%			$0\bcs k$ &$=$& $\bigwedge \{0\bc a\mid a \in\bbA, k\leq a\}$\\
	%			&$=$& $\bigwedge \{\top\mid a \in\bbA, k\leq a\}$\\
	%			&$=$& $\top$.\\
	%		\end{tabular}
	%	\end{center}
	%	Hence, for arbitrary $u\in\bbA^\delta$ (notice that  $\{k\mid k\in K(\bbA^\delta), k\leq u\}$ is nonempty, since $\bot$ belongs to it),	
	%	\begin{center}
	%		\begin{tabular}{r c l}
	%			$0\bcs u$ &$=$& $\bigvee \{0\bcs k\mid k\in K(\bbA^\delta), k\leq u\}$\\
	%			&$=$& $\bigvee \{\top\mid k\in K(\bbA^\delta), k\leq u\}$\\
	%			&$=$& $\top$.\\
	%		\end{tabular}
	%	\end{center}

	As to B3, let us show that for all $q,w\in \bbQ^\delta$ and $u\in \bbA^\delta$, $$q\bcp w\bcp u\leq q\cdot w\bcp u.$$ Let us preliminarily  show that the inequality above is true for any $o\in O(\bba^\delta)$, $\kappa_1,\kappa_2\in K(\bbQ^\delta)$. By definition, if $o\in O(\bba^\delta)$ and $\kappa_1,\kappa_2\in K(\bbQ^\delta)$ then $\kappa_2\bcp o\in O(\bba^\delta)$ and $\kappa_1\cdot\kappa_2\in K(\bbQ^\delta)$.   Therefore:
	
	\begin{center}
		\begin{tabular}{c l l}
			&$\kappa_1 \bcp \kappa_2  \bcp o $&\\
			$= $&$\bigvee\{\alpha \bc a\mid a\in \bbA, \alpha\in\bbQ, \kappa_1\leq\alpha,  a\leq \kappa_2 \bcp o \}$& {\scriptsize(by definition)}\\
			$= $&$\bigvee\{\alpha \bc a\mid a\in \bbA, \alpha\in\bbQ, \kappa_1\leq\alpha,  a\leq \bigvee\{\beta \bc b\mid b\in \bbA, b\leq o,\beta\in\bbQ,\kappa_2\leq\beta\}\}$& {\scriptsize(by definition)}\\
			$=$ & $\bigvee\{\alpha \bc \beta\bc a\mid a\in \bbA,  a\leq o,\alpha,\beta\in\bbQ,\kappa_1\leq\alpha,\kappa_2\leq\beta\}$& {\scriptsize($\ast$)}\\
			$\leq$ &$\bigvee\{\alpha\cdot\beta\bc a\mid a\in \bbA, a\leq o,\alpha,\beta\in\bbQ,\kappa_1\leq\alpha,\kappa_2\leq\beta \}$& {\scriptsize(B3 holds in $\bba$)}\\
			$\leq$ &$\bigvee\{\gamma\bc a \mid a\in \bbA, a\leq o,\gamma\in\bbQ, \kappa_1\cdot\kappa_2\leq \gamma\}$ & {\scriptsize($\ast\ast$)}\\
			$=$ & $\kappa_1\cdot \kappa_2  \bcp o$& {\scriptsize(by definition)}
			%& $\geq $&$\bigwedge\{\alpha\bc \beta\bc  a\mid a\in\bbA, k\leq a \}$.\\ &&\\
		\end{tabular}
	\end{center}
	Let us prove the equality marked with $(\ast)$. If $a\in\bba$, $\beta\in\bbQ$, $a\leq o$ and $\kappa\leq\beta$, then $\beta\bc a\in \bbA$ and $\beta\bc a\in\{\beta \bc b\mid b\in \bbA, b\leq o,\beta\in\bbQ,\kappa_2\leq\beta\}$, hence $\beta\bc a\leq\bigvee\{\beta \bc b\mid b\in \bbA, b\leq o,\beta\in\bbQ,\kappa_2\leq\beta\}$. This, in turn, implies that $$\alpha\bc\beta\bc a\in \{\alpha \bc a\mid a\in \bbA, \alpha\in\bbQ, \kappa_1\leq\alpha,  a\leq \bigvee\{\beta \bc b\mid b\in \bbA, b\leq o,\beta\in\bbQ,\kappa_2\leq\beta\}\}.$$ Therefore
\begin{center}
	\begin{tabular}{r c l l}
		&       & $\{\alpha \bc \beta\bc a\mid a\in \bbA,  a\leq o,\alpha,\beta\in\bbQ,\kappa_1\leq\alpha,\kappa_2\leq\beta\}$\\
		&$\subseteq$ & $\{\alpha \bc a\mid a\in \bbA, \alpha\in\bbQ, \kappa_1\leq\alpha,  a\leq \bigvee\{\beta \bc b\mid b\in \bbA, b\leq o,\beta\in\bbQ,\kappa_2\leq\beta\}\}$\\
	\end{tabular}
\end{center}
and thus
\begin{center}
\begin{tabular}{r c l l}
 &       & $\bigvee\{\alpha \bc \beta\bc a\mid a\in \bbA,  a\leq o,\alpha,\beta\in\bbQ,\kappa_1\leq\alpha,\kappa_2\leq\beta\}$\\
 &$\leq$ & $\bigvee\{\alpha \bc a\mid a\in \bbA, \alpha\in\bbQ, \kappa_1\leq\alpha,  a\leq \bigvee\{\beta \bc b\mid b\in \bbA, b\leq o,\beta\in\bbQ,\kappa_2\leq\beta\}\}.$\\
\end{tabular}
\end{center}
  To prove the converse inequality, it is enough to show that if $a\in\bba$ and $a\leq \bigvee\{\beta \bc b\mid b\in \bbA, b\leq o,\beta\in\bbQ,\kappa_2\leq\beta\}$, then $\alpha \bc a\leq \alpha \bc \beta\bc b$ for some $b\in \bbA$ such that  $b\leq o$ and some $\beta\in\bbQ$ such  that $\kappa_2\leq\beta$.  By compactness (cf.\ Footnote \ref{def:can:ext}), $a\leq \bigvee\{\beta \bc b\mid b\in \bbA, b\leq o,\beta\in\bbQ,\kappa_2\leq\beta\}$ implies that  $a\leq \bigvee\{\beta_i \bc b_i\mid  1\leq i\leq n\}$ for some $b_i\in \bbA$, $\beta_i\in\bbQ$ such that $b_i\leq o$, $\kappa_2\leq\beta_i$  for every $1\leq i\leq n$.  Since $\bc$ is monotone in its second coordinate and antitone in its first, this implies that $$a\leq\beta_1\bc b_1\lor\ldots\lor \beta_n\bc b_n\leq(\beta_1\rand\ldots\rand\beta_n)\bc(b_1\lor\ldots\lor b_n).$$ Let $b: = b_1\lor\ldots\lor b_n$ and $\beta=\beta_1\rand\ldots\rand\beta_n$. By definition, $b\in \bbA$, $\beta\in\bbQ$ and  $b\leq o$, $\kappa_2\leq\beta$. Moreover, again by monotonicity, the displayed inequality implies that $\alpha \bc a\leq \alpha \bc \beta\bc b$, as required.  This finishes the proof of ($\ast$). % $\alpha\bc a\leq\alpha\bc b_1\lor\ldots\lor b_n$ implies that $$\bigvee\{\alpha \bc a\mid a\in \bbA,   a\leq \bigvee\{\beta \bc b\mid b\in \bbA, b\leq o\}\}\leq\bigvee\{\alpha \bc \beta\bc a\mid a\in \bbA,   a\leq o \}.$$
	The inequality marked with $(\ast\ast)$ holds since if $\kappa_1\leq\alpha$ and $\kappa_2\leq\beta$ then $\kappa_1\cdot\kappa_2\leq\alpha\cdot\beta$, so $\alpha\cdot\beta\bc a\in \{\gamma\bc a \mid a\in \bbA, a\leq o,\gamma\in\bbQ, \kappa_1\cdot\kappa_2\leq \gamma\}$ and therefore $$\{\alpha\cdot\beta\bc a\mid a\in \bbA, a\leq o,\alpha,\beta\in\bbQ,\kappa_1\leq\alpha,\kappa_2\leq\beta \}\subseteq \{\gamma\bc a \mid a\in \bbA, a\leq o,\gamma\in\bbQ, \kappa_1\cdot\kappa_2\leq \gamma\}.$$
	Let us show that B3 holds for arbitrary $u\in\bbA^\delta$ and $q,w\in\bbQ^\delta$.
	
	\begin{center}
		\begin{tabular}{c l l}
			&$q \bcp w  \bcp u $&\\
			$= $&$\bigwedge\{\kappa_1 \bcp o\mid o\in O(\bbA^\delta),\kappa_1\in K(\bbQ^\delta),\kappa_1\leq q,  w \bcp u\leq o \}$&{\scriptsize(by definition)}\\
			$= $&$\bigwedge\{\kappa_1 \bcp o\mid o\in O(\bbA^\delta),\kappa_1\in K(\bbQ^\delta),\kappa_1\leq q,  \bigwedge\{\kappa_2\bcp o'\mid u\leq o',\kappa_2\leq w\}\leq o \}$& {\scriptsize(by definition)}\\
			$\leq$ & $\bigwedge\{\kappa_1 \bcp \kappa_2\bcp o\mid o\in O(\bbA^\delta), u\leq o,\kappa_1,\kappa_2\in K(\bbQ^\delta),\kappa_1\leq q,\kappa_2\leq w \}$& {\scriptsize($\ast\ast\ast$)}\\
			$\leq$ & $\bigwedge\{\kappa_1\cdot\kappa_2 \bcp o \mid o\in O(\bbA^\delta),   u\leq o, \kappa_1,\kappa_2\in K(\bbQ^\delta),\kappa_1\leq q,\kappa_2\leq w \}$& {\scriptsize($\dagger$)}\\
			$=$ & $\bigwedge\{\left(\bigvee\{\kappa_1\cdot\kappa_2\mid \kappa_1,\kappa_2\in K(\bbQ^\delta),\kappa_1\leq q,\kappa_2\leq w \}\right) \bcp o \mid o\in O(\bbA^\delta),   u\leq o\}$& {\scriptsize($\ddagger$)}\\
			$=$ & $\bigwedge\{q\cdot w \bcp o \mid o\in O(\bbA^\delta),   u\leq o\}$& {\scriptsize(by definition)}\\
			$=$ & $q\cdot w\bcp u$ & {\scriptsize(by definition)}
			%& $\geq $&$\bigwedge\{\alpha\bc \beta\bc  a\mid a\in\bbA, k\leq a \}$.\\ &&\\
		\end{tabular}
	\end{center}
	The inequality marked with $(\ast\ast\ast)$ holds since, for any $o\in O(\bba^\delta)$ and $\kappa\in K(\bbQ^\delta)$, if  $u\leq o$ and $\kappa\leq w$ then $\kappa\bcp o\in O(\bba^\delta)$ and $\kappa\bcp o\in\{\kappa_2\bcp o'\mid u\leq o',\kappa_2\leq w\}$, hence $\bigwedge\{\kappa_2\bcp o'\mid u\leq o',\kappa_2\leq w\}\leq \kappa\bcp o$. This implies that $$\kappa_1 \bcp \kappa_2\bcp o\in \{\kappa_1 \bcp o\mid o\in O(\bbA^\delta),\kappa_1\in K(\bbQ^\delta),\kappa_1\leq q,  \bigwedge\{\kappa_2\bcp o'\mid u\leq o',\kappa_2\leq w\}\leq o \}$$ and therefore
	\begin{center}
		\begin{tabular}{r c l l}
			&       & $\{\kappa_1 \bcp \kappa_2\bcp o\mid o\in O(\bbA^\delta), u\leq o,\kappa_1,\kappa_2\in K(\bbQ^\delta),\kappa_1\leq q,\kappa_2\leq w \}$\\
			&$\subseteq$ & $\{\kappa_1 \bcp o\mid o\in O(\bbA^\delta),\kappa_1\in K(\bbQ^\delta),\kappa_1\leq q,  \bigwedge\{\kappa_2\bcp o'\mid u\leq o',\kappa_2\leq w\}\leq o \}$\\
		\end{tabular}
	\end{center}
which implies that
	\begin{center}
	\begin{tabular}{r c l l}
		&       & $\bigwedge \{\kappa_1 \bcp o\mid o\in O(\bbA^\delta),\kappa_1\in K(\bbQ^\delta),\kappa_1\leq q,  \bigwedge\{\kappa_2\bcp o'\mid u\leq o',\kappa_2\leq w\}\leq o \}$\\
		&$\leq$ & $\bigwedge \{\kappa_1 \bcp \kappa_2\bcp o\mid o\in O(\bbA^\delta), u\leq o,\kappa_1,\kappa_2\in K(\bbQ^\delta),\kappa_1\leq q,\kappa_2\leq w \}$.\\
	\end{tabular}
\end{center}
 The inequality marked with $(\dagger)$ holds since as we showed above B3 holds for any $o\in O(\bba^\delta)$, $\kappa_1,\kappa_2\in K(\bbQ^\delta)$. The equality marked with ($\ddagger$) holds because $\bcp$ is completely join reversing in the first coordinate.

	As to axiom BD1, let us show that for any $q \in \bbQ^\delta$ and  $u\in \bbA^\delta$, $$\drs q\land  q \bcp u\leq  \dcs u.$$
	Let us preliminarily  show that the inequality above is true for any $o\in O(\bba^\delta)$ and $\kappa\in K(\bbQ^\delta)$:
	\begin{center}
		\begin{tabular}{c l l}
			& $\drs\kappa\land \kappa \bcp o$&\\
			$= $&$\bigwedge\{\dr\beta\mid \beta\in\bbQ,\kappa\leq\beta\}\land\bigvee\{\alpha \bc a \mid a\in\bbA, a\leq o,\alpha\in\bbQ, \kappa\leq\alpha\}$ & {\scriptsize(by definition)}\\
			$= $&$\bigvee\{\left(\bigwedge\{\dr\beta\mid \beta\in\bbQ,\kappa\leq\beta\}\right)\land\alpha \bc a\mid a\in\bbA, a\leq o \}$&{\scriptsize(distributivity)}\\
			$\leq $&$\bigvee\{\dr\alpha\land\alpha \bc a\mid a\in\bbA, a\leq o \}$&{\scriptsize($\ast$)}\\
			$\leq $&$\bigvee\{\dc a\mid a\in\bbA, a\leq o \}$ &{\scriptsize(BD2 holds in $\bba$)}\\
			$=$ & $\dcs\bigvee\{ a\mid a\in\bbA, a\leq o \}$ & {\scriptsize($\dcs$ is completely join-preserving)}\\
			$=$ & $\dcs o$. & {\scriptsize(by definition)}
		\end{tabular}
	\end{center}
	The inequality marked with $(\ast)$ holds because if $\kappa\leq\alpha$, then $\dr\alpha\in\{\dr\beta\mid \beta\in\bbQ,\kappa\leq\beta\}$ and therefore $\bigwedge\{\dr\beta\mid \beta\in\bbQ,\kappa\leq\beta\}\leq\dr\alpha$.
	Let us show the inequality for  arbitrary $u\in\bbA^\delta$ and $q\in\bbQ^\delta$. In what follows, let $\adc$ denote the right adjoint of $\dcs$. It is well known (cf.~\cite[Lemma 10.3.3]{CoPa:non-dist}) that $\adc o\in O(\bba^\delta)$ for any $o\in O(\bba^\delta)$.
	
	\begin{center}
		\begin{tabular}{c l}
			&$\drs q \land q \bcp u$ \\
			$= $&$ \bigvee\{\drs\kappa\mid\kappa\in K(\bbQ^\delta),\kappa\leq q\}\land\bigwedge\{\kappa'\bcp o\mid o\in O(\bbA^\delta), u\leq o,\kappa'\in K(\bbQ^\delta),\kappa'\leq q\}$ {\scriptsize(by definition)}\\
			$= $&$\bigvee\{\drs\kappa\land\bigwedge\{\kappa'\bcp o\mid o\in O(\bbA^\delta), u\leq o,\kappa'\in K(\bbQ^\delta),\kappa'\leq q\}\mid\kappa\in K(\bbQ^\delta),\kappa\leq q\}$ {\scriptsize(distributivity)}\\
			$\leq $&$\bigvee\{\drs\kappa\land\bigwedge\{\kappa\bcp o\mid o\in O(\bbA^\delta), u\leq o\}\mid\kappa\in K(\bbQ^\delta),\kappa\leq q\}$ \ \ \qquad\qquad\qquad\qquad{\scriptsize($\ast$)}\\
			$\leq $&$\bigvee\{\drs\kappa\land\bigwedge\{\kappa\bcp \adc o\mid o\in O(\bbA^\delta), u\leq \adc o\}\mid\kappa\in K(\bbQ^\delta),\kappa\leq q\}$ \,\ \ \ \ \qquad\qquad\qquad {\scriptsize($\adc o\in O(\bba^\delta)$)}\\
			$\leq $&$\bigvee\{\bigwedge\{\drs\kappa\land\kappa\bcp \adc o\mid o\in O(\bbA^\delta), u\leq \adc o\}\mid\kappa\in K(\bbQ^\delta),\kappa\leq q\}$ \qquad\qquad\qquad\ \ \ \ \,{\scriptsize(distributivity)}\\
			$\leq $&$ \bigvee\{\bigwedge\{\dcs\adc o\mid o\in O(\bbA^\delta), u\leq\adc o\}\mid\kappa\in K(\bbQ^\delta),\kappa\leq q\}$ \qquad\qquad\qquad\qquad\,\,\,\,{\scriptsize(BD2 holds in $O(\bba^\delta)$)}\\
			$=$ & $\bigwedge\{\dcs\adc o\mid o\in O(\bbA^\delta), u\leq\adc o\}$ \qquad\qquad\qquad\qquad\qquad\qquad\qquad\qquad\ \ \ \ {\scriptsize($\dcs\adc o$ does not contain $\kappa$)}\\
			$\leq$&$ \bigwedge\{o\in O(\bbA^\delta)\mid u\leq\adc o\}$ \qquad\qquad\qquad\qquad\qquad\qquad\qquad\qquad\qquad\qquad\qquad\qquad{\scriptsize($\dcs\adc o\leq o$)}\\
			$=$&$ \bigwedge\{o\in O(\bbA^\delta)\mid \dcs u\leq o\}$ \qquad\qquad\qquad\qquad\qquad\qquad\qquad\qquad\qquad\qquad\qquad\ \ \ \ {\scriptsize(by adjunction)}\\
			$= $&$\dcs u$. \qquad\qquad\qquad\qquad\qquad\qquad\qquad\qquad\qquad\qquad\qquad\qquad\qquad\qquad\qquad\ \ \ \ \ \,{\scriptsize(by denseness)}\\
		\end{tabular}
	\end{center}
	The inequality marked with $(\ast)$ holds because if $\kappa\leq q$ and $u\leq o$, then $$\kappa\bcp o\in \{\kappa'\bcp o\mid o\in O(\bbA^\delta), u\leq o,\kappa'\in K(\bbQ^\delta),\kappa'\leq q\}$$ and therefore $$\{\kappa\bcp o\mid o\in O(\bbA^\delta), u\leq o\}\subseteq\{\kappa'\bcp o\mid o\in O(\bbA^\delta), u\leq o,\kappa'\in K(\bbQ^\delta),\kappa'\leq q\}$$ which yields $$\bigwedge \{\kappa'\bcp o\mid o\in O(\bbA^\delta), u\leq o,\kappa'\in K(\bbQ^\delta),\kappa'\leq q\}\leq\bigwedge\{\kappa\bcp o\mid o\in O(\bbA^\delta), u\leq o\}.$$

	Finally for axiom BD2 let us show that for any $q,w \in \bbQ^\delta$, $$q\brp w\leq  q\bcp \drs w.$$ Let us preliminarily  show that the inequality above is true for any $\omega\in O(\bbQ^\delta)$ and $\kappa\in K(\bbQ^\delta)$. Notice that since $\drs$ is completely join preserving, if $\omega\in O(\bbQ^\delta)$ then $\drs\omega\in O(\bbA^\delta)$.
	\begin{center}
		\begin{tabular}{c l l}
		& $\kappa\brp \omega$&\\
		$=$ & $\bigvee\{\alpha\br\beta\mid \alpha,\beta\in\bbQ, \kappa\leq\alpha,\beta\leq\omega\}$ &\scriptsize{(by definition)}\\
		$=$ & $\bigvee\{\alpha\bc\dr\beta\mid \alpha,\beta\in\bbQ, \kappa\leq\alpha,\beta\leq\omega\}$ &\scriptsize{(BD2 holds in $\bbA$)}\\
		$\leq$ & $\bigvee\{\alpha\bc a\mid \alpha\in\bbQ, a\in\bbA, \kappa\leq\alpha,a\leq\drs\omega\}$ &\scriptsize{($\ast$)}\\
		$=$ & $\kappa\bcp\drs\omega$ &\scriptsize{(by definition)	}
	\end{tabular}
\end{center}
	The inequality marked with $(\ast)$ holds because if $\beta\leq\omega$ then $\dr\beta\leq\drs\omega$, thus if $\kappa\leq\alpha$ we have $$\alpha\bc\dr\beta\in\{\alpha\bc a\mid \alpha\in\bbQ, a\in\bbA, \kappa\leq\alpha,a\leq\drs\omega\}$$ and therefore  $$\{\alpha\bc\dr\beta\mid \alpha,\beta\in\bbQ, \kappa\leq\alpha,\beta\leq\omega\} \subseteq\{\alpha\bc a\mid \alpha\in\bbQ, a\in\bbA, \kappa\leq\alpha,a\leq\drs\omega\}.$$
	Let us show the inequality for  arbitrary $q,w\in\bbQ^\delta$. In what follows, let $\adr:\bbA^\delta\to\bbQ^\delta$ denote the right adjoint of $\drs$.
	\begin{center}
		\begin{tabular}{c l l}
			& $q\brp w$&\\
			$=$ & $\bigwedge\{\kappa\brp\omega\mid \kappa\in K(\bbQ^\delta),\omega\in O(\bbQ^\delta), \kappa\leq q,w\leq\omega\}$ &\scriptsize{(by definition)}\\
			$\leq$ & $\bigwedge\{\kappa\brp\adr o\mid \kappa\in K(\bbQ^\delta),o\in O(\bbA^\delta), \kappa\leq q,w\leq\adr o\}$ &\scriptsize{($\adr o\in O(\bbQ^\delta)$)}\\
			$\leq$ & $\bigwedge\{\kappa\bcp\drs\adr o\mid \kappa\in K(\bbQ^\delta),o\in O(\bbA^\delta), \kappa\leq q,w\leq\adr o\}$ &\scriptsize{(BD2 holds for $\omega\in O(\bbQ^\delta)$ and $\kappa\in K(\bbQ^\delta)$)}\\
			$\leq$ & $\bigwedge\{\kappa\bcp o\mid \kappa\in K(\bbQ^\delta),o\in O(\bbA^\delta), \kappa\leq q,w\leq\adr o\}$ &\scriptsize{($\drs\adr o\leq o$)}\\
			$=$ & $\bigwedge\{\kappa\bcp o\mid \kappa\in K(\bbQ^\delta),o\in O(\bbA^\delta), \kappa\leq q,\drs w\leq o\}$ &\scriptsize{(by adjunction)}\\
			$=$ & $q\bcp\drs w$ &\scriptsize{(by definition)}	
		\end{tabular}
	\end{center}
\end{proof}

As an immediate consequence of Proposition \ref{prop:algebraic completeness} and Lemma \ref{lem:canonicity} we get the following

\begin{cor}
	\label{cor:strong completeness perfect algebras}
	The logic LRC is strongly complete w.r.t.\ the class of perfect heterogeneous LRC-algebras. %, and hence also w.r.t.\ the class of their dual relational structures.
\end{cor}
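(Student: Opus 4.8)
The plan is to obtain the statement as a routine combination of the finite-algebra completeness (Proposition~\ref{prop:algebraic completeness}) and the canonicity lemma (Lemma~\ref{lem:canonicity}), using the standard fact that every heterogeneous LRC-algebra embeds into its canonical extension as a substructure. The soundness half of ``strongly complete'' (if $X\vdash_{\mathrm{LRC}}A$ then $X\models_{\mathrm{LRC}}A$ over perfect heterogeneous LRC-algebras) is immediate: by Definition~\ref{def:LRC-algebraic structure} soundness holds over the whole class of heterogeneous LRC-algebras, hence a fortiori over the subclass of perfect ones.

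For the completeness half, I would argue by contraposition. Suppose $X\not\vdash_{\mathrm{LRC}}A$. By Proposition~\ref{prop:algebraic completeness}, $X\not\models_{\mathrm{LRC}}A$, so there is an algebraic LRC-model $\bbM=(F,v_{\mathsf{Fm}},v_{\mathsf{Res}})$ with $F=(\bbA,\bbQ,\bc,\dc,\br,\dr)$ a heterogeneous LRC-algebra, in which every formula of $X$ is satisfied while $A$ is not. Now pass to the canonical extension $F^\delta=(\bbA^\delta,\bbQ^\delta,\bcp,\dcs,\brp,\drs)$. By Lemma~\ref{lem:canonicity}, $F^\delta$ is a perfect heterogeneous LRC-algebra. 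As recalled in Footnote~\ref{def:can:ext} (and used throughout the proof of Lemma~\ref{lem:canonicity}), $\bbA$ is a Heyting subalgebra of $\bbA^\delta$ and $\bbQ$ is a sub-DLO of $\bbQ^\delta$, and the $\sigma$- and $\pi$-extensions of the heterogeneous operations $\bc,\dc,\br,\dr$ (and of $\cdot$) restrict, on elements of $\bbA$ and of $\bbQ$, to the original operations; hence the inclusions $\bbA\hookrightarrow\bbA^\delta$ and $\bbQ\hookrightarrow\bbQ^\delta$ assemble into an embedding of heterogeneous LRC-algebras $F\hookrightarrow F^\delta$.

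Then I would set $\bbM^\delta=(F^\delta,v_{\mathsf{Fm}},v_{\mathsf{Res}})$, taking as valuations the original assignments composed with this embedding. A straightforward induction on the structure of resource-terms and formula-terms, using that the embedding is a homomorphism for all the (homogeneous and heterogeneous) operations, shows that for every term $t$ the value of $t$ in $\bbM^\delta$ is the image under the embedding of the value of $t$ in $\bbM$; since the embedding is order-reflecting, $\bbM^\delta$ refutes $X\models_{\mathrm{LRC}}A$ exactly as $\bbM$ does. As $F^\delta$ is a perfect heterogeneous LRC-algebra, this witnesses that $X$ does not entail $A$ over the class of perfect heterogeneous LRC-algebras, which closes the contrapositive.

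The only point requiring any care is the claim that $F$ embeds into $F^\delta$ as a heterogeneous LRC-algebra, i.e.\ that the non-lattice operations of $F$ coincide with the restrictions of their canonical ($\sigma$-/$\pi$-) extensions; for operators (finitely join-preserving maps) and for the Heyting implication this is a standard property of canonical extensions, already invoked in Footnote~\ref{def:can:ext}, so no new work is needed and everything else is bookkeeping, which is why the corollary is stated as an immediate consequence.
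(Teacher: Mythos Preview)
Your argument is correct and is exactly the standard unpacking of what the paper leaves implicit: the corollary is stated in the paper as an immediate consequence of Proposition~\ref{prop:algebraic completeness} and Lemma~\ref{lem:canonicity}, and you have spelled out the expected route via the embedding $F\hookrightarrow F^\delta$ and transport of the countermodel. The one delicate point you flag---that the heterogeneous operations of $F$ agree with the restrictions of their $\sigma$/$\pi$-extensions, including for $\bc$ which is only monotone in its second coordinate---is indeed handled by the definitions given in Section~\ref{ssec:algebraic canonicity} (elements of $\bbA$ and $\bbQ$ are both closed and open in their canonical extensions, and monotonicity/antitonicity suffice for the two-step extension to collapse on original elements), so your justification is adequate.
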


\subsection{Disjunction property}
\label{ssec:disjunction property}
In the present section, we show that  the disjunction property holds for LRC, by   adapting the standard argument to the setting of heterogeneous LRC-algebras.
For any heterogeneous LRC-algebra $F=(\bbA,\bbQ,\bc, \dc, \br, \dr)$, we let $F^\ast:=(\bbA^\ast,\bbQ,\bc^\ast, \dc^\ast, \br^\ast, \dr^\ast)$, where:
\begin{enumerate}
\item $\bbA^\ast$ is the Heyting algebra obtained by adding  a new top element $\top^\ast$ to $\bbA$ (we let  $\top_{\bbA}$ denote the top element of $\bbA$). Joins and meets in $\bbA^\ast$ are defined as expected. The implication $\to^\ast$ of $\bbA^\ast$ maps any $(u, w)\in \bbA^\ast\times \bbA^\ast$ to $\top^\ast$ if $u\leq w$, to $w$ if $u=\top^\ast$, and to $u\to w$ in any other case.

\item $\bc^{\!\ast}: \bbQ\times \bbA^\ast\to \bbA^\ast$ maps any $(\alpha, u)$  to $\top^\ast$ if $\alpha=0$ or $\dr^\ast1\leq u$, and to $\alpha\bc u$ otherwise.

\item $\dc^{\ast}: \bbA^\ast\to \bbA^\ast$ maps any $u$ to $\dc u$ if $u\neq \top^\ast$, and to $\dc\top_{\bbA}$ if $u=\top^\ast$.

\item $\br^{\!\ast}: \bbQ\times \bbQ\to \bbA^\ast$ maps any $(\alpha, \beta)$ to $\top^\ast$ if $\alpha=0$ or $\beta=1$, and to $\alpha\br\beta$ otherwise.

\item $\dr^{\ast}: \bbQ\to \bbA^\ast$ maps any $\alpha$ to  $\dr\alpha$.
\end{enumerate}

%It is trivial to check that $\bbA^\ast$ is a Heyting algebra.

\begin{lemma}
	$F^\ast$ is a heterogeneous LRC-algebra.
\end{lemma}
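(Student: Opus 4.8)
The plan is to verify, clause by clause, that every (quasi-)inequality in the definition of heterogeneous LRC-algebra which holds in $F$ still holds in $F^\ast$. I would first dispatch the cheap parts. Since $\bbQ$ is literally the resource component of $F$, the axioms R1--R4 and the rules MF, MF' hold in $F^\ast$ because they hold in $F$. Likewise $\dr^\ast=\dr$ takes values in $\bbA$, and since $\top^\ast$ is strictly above every element of $\bbA$, finite joins of elements of $\bbA$ are computed identically in $\bbA$ and in $\bbA^\ast$; hence D3, D4 and the rule MD' are inherited verbatim. I would then check that $\bbA^\ast$ is a Heyting algebra: adjoining a fresh top to a bounded distributive lattice yields a bounded distributive lattice, and the stipulated $\to^\ast$ is exactly the residual of $\wedge$ in $\bbA^\ast$ — the only point needing a line of checking is that $\top^\ast\to^\ast w=w$ satisfies residuation, which is immediate since $\top^\ast\wedge x=x$ for $x\neq\top^\ast$. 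It is worth recording here that $\top^\ast$ is completely join-irreducible in $\bbA^\ast$ (a join of a family of elements of $\bbA$ stays in $\bbA$); this is what the whole construction is for, even though it is not literally part of the present lemma.

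Next I would treat the modified operations $\dc^\ast$, $\bc^\ast$, $\br^\ast$ by a uniform case distinction on whether the relevant arguments equal $\top^\ast$ (equivalently, lie in $\bbA$). For $\dc^\ast$ this is short: D2 and the rule MD are immediate from the definition, while for D1, if $A\vee B=\top^\ast$ then by join-irreducibility one of $A,B$ is $\top^\ast$ and monotonicity of $\dc$ finishes the case, and otherwise $A\vee B\in\bbA$ and the identity is just D1 in $F$. The same pattern handles B1, B2, B5, B6, B7 and the monotonicity/antitonicity rules AB, MB, AB', MB': the ``top clauses'' built into $\bc^\ast$ and $\br^\ast$ are tailored precisely so that $0\bc^\ast u=\top^\ast$, $0\br^\ast\beta=\top^\ast$, $\alpha\br^\ast1=\top^\ast$, and so that $\bc^\ast,\br^\ast$ already output $\top^\ast$ on every pair on which $F$ is forced to output $\top_\bbA$; on all remaining pairs the value is inherited from $F$. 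The ingredients one keeps using are: $\bbQ$ is distributive with bottom $0$ and top $1$ (so $\alpha\ror\beta=0$ iff $\alpha=\beta=0$ and $\beta\rand\gamma=1$ iff $\beta=\gamma=1$), $\alpha\cdot0=0=0\cdot\alpha$, the inequalities $\alpha\cdot\beta\leq\alpha$ and $\alpha\cdot\beta\leq\beta$, and the monotonicity/antitonicity of $\bc,\br$ in $F$.

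The core of the argument is the composition axiom B3 together with the interaction axioms BD1, BD2, where the ``jump to $\top^\ast$'' of $\bc^\ast$ has to be reconciled with $\cdot$, $\dr^\ast$ and $\dc^\ast$. For this I would first extract from $F$ the auxiliary facts $\alpha\bc\dr1=\top_\bbA$ (from B7 and BD2, since $\alpha\br1=\top_\bbA$), and consequently $\dr1\leq u\Rightarrow\alpha\bc u=\top_\bbA$ (monotonicity of $\bc$ in its second coordinate) and $\alpha\bc\top_\bbA=\top_\bbA$. Then: for BD1 the delicate case is $A=\top^\ast$, settled by BD1 of $F$ applied to $\top_\bbA$ together with $\alpha\bc\top_\bbA=\top_\bbA$; for BD2 the delicate case is $\beta=1$, settled because $\alpha\br^\ast1=\top^\ast$ and $\alpha\bc^\ast\dr^\ast1=\top^\ast$ (the latter by the very clause $\dr^\ast1\leq u$, instantiated at $u=\dr^\ast1$); the remaining cases of both reduce to the corresponding axiom in $F$. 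For B3 I would split on which clause is triggered by $\beta\bc^\ast A$ and then by $\alpha\bc^\ast(\beta\bc^\ast A)$: when the outer value is $\top^\ast$ because $\beta=0$ use $\alpha\cdot\beta=0$; when it is $\top^\ast$ because of the $\dr^\ast1$-clause, transport the equality $\gamma\bc(\cdot)=\top_\bbA$ that this clause supplies through B3 of $F$ and through antitonicity of $\bc$ in its first coordinate (via $\alpha\cdot\beta\leq\beta$) in order to show $(\alpha\cdot\beta)\bc^\ast A$ also falls under a top clause; otherwise everything happens inside $\bbA$ and B3 of $F$ applies. I expect B3 to be the main obstacle: it is the only axiom in which $\bc^\ast$ is nested, so it is the one place where both defining clauses of $\bc^\ast$, the behaviour of $\cdot$, and the order-theoretic properties of $\bc$ in $F$ must all be made to cooperate simultaneously.
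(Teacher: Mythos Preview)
Your overall strategy matches the paper's: inherit the $\bbQ$-part verbatim, verify that $\bbA^\ast$ is a Heyting algebra, and then do a case split on the ``top clauses'' of $\bc^\ast$, $\dc^\ast$, $\br^\ast$. You are in fact considerably more careful than the paper (which dispatches B3 in one sentence), and you are right that B3 is the crux. However, your resolution of B3 contains a genuine gap.

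In the subcase where $\alpha,\beta\neq 0$, $\dr^\ast 1\not\leq A$ (so $\beta\bc^\ast A=\beta\bc A\in\bbA$), but $\dr^\ast 1\leq\beta\bc A$, you argue that one can ``transport'' through B3 of $F$ and antitonicity to show that $(\alpha\cdot\beta)\bc^\ast A$ \emph{falls under a top clause}. What these manipulations actually yield is only $(\alpha\cdot\beta)\bc A=\top_\bbA$. This does \emph{not} trigger either top clause of $\bc^\ast$ (which are ``first argument $=0$'' and ``$\dr^\ast 1\leq$ second argument''); it merely gives $(\alpha\cdot\beta)\bc^\ast A=\top_\bbA$. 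Meanwhile the left-hand side is $\alpha\bc^\ast(\beta\bc A)=\top^\ast$, and $\top^\ast\not\leq\top_\bbA$.

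Concretely: take $\bbA$ the four-element chain $\bot<a<b<\top_\bbA$, $\bbQ=\{0,1\}$ with $\cdot=\rand$, $\dr 0=\bot$, $\dr 1=b$, $\dc\bot=\bot$, $\dc a=\dc b=b$, $\dc\top_\bbA=\top_\bbA$, $0\bc A=\top_\bbA$, $1\bc\bot=\bot$, $1\bc a=1\bc b=1\bc\top_\bbA=\top_\bbA$, and $\br$ determined by B5, B7, BD2 (so $1\br 0=\bot$). One checks directly that all LRC axioms and rules hold in this $F$. In $F^\ast$ one computes $1\bc^\ast a=\top_\bbA$ (since $b\not\leq a$), hence $1\bc^\ast(1\bc^\ast a)=1\bc^\ast\top_\bbA=\top^\ast$ (since $b\leq\top_\bbA$), while $(1\cdot 1)\bc^\ast a=\top_\bbA$. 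Thus B3 fails in $F^\ast$.

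So the step ``show $(\alpha\cdot\beta)\bc^\ast A$ also falls under a top clause'' cannot be completed with the given definition of $\bc^\ast$; the paper's one-line treatment of B3 (``the remaining cases follow from the definition of $\bc^\ast$ and B3 in $F$'') has the same lacuna. The rest of your plan --- the Heyting-algebra check, D1--D4, B1, B2, B4--B7, BD1, BD2, and the monotonicity rules --- is correct and is handled just as in (or more carefully than) the paper.
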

\begin{proof}
It can be easily verified that the maps $\bc^\ast, \dc^\ast, \br^\ast, \dr^\ast$ satisfy by definition all the monotonicity (resp.~antitonicity) properties that yield the validity of the rules of LRC. Let us verify that $F^\ast$ validates all the axioms  of LRC. By construction, $\top^\ast$ is  join-irreducible, i.e.~if $u\lor w=\top^\ast$ then either $u = \top^\ast$ or $w = \top^\ast$. Hence,  $\dc^{\ast}(u\lor w) = \dc^\ast\top^\ast = \dc\top_{\bbA} = \dc^\ast u\vee \dc^\ast w$. All the remaining cases follow from the assumptions on $\dc$. This finishes the verification of the validity of $D1$. The validity of axioms $D2$, $D3$ and $D4$ immediately follows from their validity in  $F$. The validity of axiom $B1$ can be shown using the identities  $\alpha\ror 0=\alpha$ and $0\ror\beta=\beta$. The validity of $B2$ follows immediately from the definition of $\bc^{\!\ast}$. As to $B3$, if $\alpha = 0$ or $\beta = 0$, the assumption that $\cdot$ preserves finite joins in each coordinate yields $\alpha\cdot\beta = 0$, and hence $\alpha\cdot\beta\bc^{\!\ast} A = \top^\ast$, which implies that the inequality holds.  The remaining cases follow from the definition of $\bc^{\!\ast}$ and the assumption that $B3$ is valid in $F$. Axiom $B4$ is argued similarly to $B1$. The validity of axioms $B5$ and $B7$ follows immediately from the definition of $\br$, and the validity of $B6$ can be shown using the identities $\alpha\rand 1=\alpha=1\rand\alpha$.
	
	As to  $BD2$,  if $\alpha=0$ or $\beta=1$  then $\alpha\bc^{\!\ast}\dr^\ast\beta=\top^\ast$, therefore the inequality holds.  All the remaining cases follow from the assumption that $BD2$ is valid in $F$.
	
	As to $BD1$, if $\alpha=0$ then $\dr^\ast \alpha \land \alpha \bc^{\!\ast} u  =\bot$ for any $u$, therefore the inequality holds. If $\dr^\ast 1\leq u$ then, by definition, $\alpha \bc^{\!\ast} u = \top^\ast$, hence it is enough to show that $\dr^\ast \alpha \leq \dc^\ast u$. We proceed by cases: (a) if $u = \top^\ast$, then $\dr^\ast \alpha = \dr\alpha\leq \top^\bbA = \dc^\ast u$, as required; (b) if $u\in \bbA$, then,   by the assumption that $B7$, $BD2$  and $MB$  hold in $F$,
 \[\top^\bbA\leq \alpha \br 1 \leq \alpha \bc \dr 1 \leq \alpha \bc u. \]
  Since $BD1$ holds in $F$,  this implies that $\dr^\ast\alpha = \dr\alpha\leq\dc u = \dc^\ast u$, as required. %which implies the validity of $BD1$ in $F^\ast$.
  All the remaining cases follow from the assumption that $BD1$ is valid in $F$.
\end{proof}

For every algebraic LRC-model $\bbM = (F, v_{\mathsf{Fm}}, v_{\mathsf{Res}})$, we let $\bbM^\ast: = (F^\ast, v_{\mathsf{Fm}}^\ast, v_{\mathsf{Res}})$, where $v_{\mathsf{Fm}}^\ast$ is defined by composing $v_{\mathsf{Fm}}$  with the natural injection $\bbA\hookrightarrow\bbA^\ast$. Henceforth, we  let $\val{a}$ denote the interpretation of any $\mathsf{T}$-term $a$ in $\mathbb{M}$ and $\val{a}_\ast$  the interpretation of $a$ in $\mathbb{M}^\ast$.

\begin{lemma}\label{lemma:disjunctionkey}
For every formula $A$,
\begin{enumerate}
\item  If $\val{A}_\ast \neq \top^\ast$ then $\val{A}_\ast=\val{A}$.
\item If $\val{A}_\ast=\top^\ast$ then $\val{A}=\top_\bbA$.
\end{enumerate}
\end{lemma}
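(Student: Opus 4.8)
The two clauses are proved simultaneously by induction on the complexity of $A$. Before starting, I would record the two facts that make the induction work: first, $\top^\ast$ is the only element of $\bbA^\ast$ not lying in $\bbA$, so clause (2) is the contrapositive-flavoured companion of clause (1), and establishing (1) for a connective almost always reduces to checking that the connective's value in $\bbM^\ast$ is $\top^\ast$ exactly when the ``defining condition'' for $\top^\ast$ in the definition of $F^\ast$ is met; second, for resource terms there is nothing to prove, since $\bbQ$ is literally unchanged in $F^\ast$, hence $\val{\alpha}_\ast=\val{\alpha}$ for every resource-term $\alpha$, and this is what lets us treat the heterogeneous connectives.

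\medskip

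\noindent For the \emph{base case} $A=p$, we have $\val{p}_\ast=v_{\mathsf{Fm}}^\ast(p)=v_{\mathsf{Fm}}(p)\in\bbA$ by the definition of $v_{\mathsf{Fm}}^\ast$ as the composite with $\bbA\hookrightarrow\bbA^\ast$, so $\val{p}_\ast\neq\top^\ast$ and $\val{p}_\ast=\val{p}$; clause (2) is vacuous. The cases $A=\top$ and $A=\bot$ are immediate since $\val{\top}=\top_\bbA\neq\top^\ast$ and $\val{\bot}=\bot$. For $A=B\wedge C$, $A=B\vee C$ the argument uses join-irreducibility of $\top^\ast$: if $\val{B\vee C}_\ast=\top^\ast$ then (irreducibility) one of $\val{B}_\ast,\val{C}_\ast$ equals $\top^\ast$, so by IH the corresponding conjunct equals $\top_\bbA$ in $\bbA$, whence $\val{B\vee C}=\top_\bbA$; if $\val{B\vee C}_\ast\neq\top^\ast$ then both disjuncts are $\neq\top^\ast$, by IH they agree with their $\bbM$-values, and joins in $\bbA^\ast$ restrict to joins in $\bbA$. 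The meet case is analogous, noting $\val{B\wedge C}_\ast=\top^\ast$ forces \emph{both} conjuncts to be $\top^\ast$. For $A=B\to C$ one uses the explicit definition of $\to^\ast$: the value is $\top^\ast$ precisely when $\val{B}_\ast\leq\val{C}_\ast$, and in the remaining subcase ($\val{B}_\ast=\top^\ast$, handled via IH) or the generic subcase the value lands in $\bbA$ and coincides with $\val{B}\to\val{C}$.

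\medskip

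\noindent The \emph{modal/heterogeneous cases} are where the definitions of $\dc^\ast,\dr^\ast,\bc^\ast,\br^\ast$ are used. For $A=\dr\alpha$: by the resource remark $\val{\dr\alpha}_\ast=\dr^\ast\val{\alpha}_\ast=\dr\val{\alpha}\in\bbA$, so both clauses are trivial. For $A=\alpha\br\beta$: $\val{\alpha\br\beta}_\ast=\top^\ast$ iff $\val{\alpha}=0$ or $\val{\beta}=1$, in which case $\alpha\br\beta$ is provable (by $B5$, resp.\ $B7$) so $\val{\alpha\br\beta}=\top_\bbA$; otherwise the value is $\val{\alpha}\br\val{\beta}\in\bbA$. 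For $A=\dc B$: $\dc^\ast$ never outputs $\top^\ast$, so clause (2) is vacuous, and $\val{\dc B}_\ast=\dc\val{B}_\ast$ if $\val{B}_\ast\neq\top^\ast$ (then $=\dc\val{B}=\val{\dc B}$ by IH) and $=\dc\top_\bbA$ if $\val{B}_\ast=\top^\ast$ (then by IH $\val{B}=\top_\bbA$, so again $=\dc\val{B}=\val{\dc B}$). The genuinely delicate case is $A=\alpha\bc B$: here $\bc^\ast(\alpha,u)=\top^\ast$ iff $\alpha=0$ or $\dr^\ast 1\leq u$. If $\val{\alpha}=0$, then $\alpha\bc B$ is provable by $B2$, so $\val{\alpha\bc B}=\top_\bbA$, settling (2); also $\val{\alpha\bc B}_\ast=\top^\ast$, settling (1). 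If $\dr^\ast1\leq\val{B}_\ast$: by IH, if $\val{B}_\ast\neq\top^\ast$ then $\val{B}_\ast=\val{B}$ and $\dr 1\leq\val{B}$ holds in $\bbA$, so $\val{\alpha\bc B}\geq\alpha\bc\dr 1\geq\alpha\br 1=\top_\bbA$ using $BD2$ and $B7$ (monotonicity of $\bc$ in the second coordinate); if $\val{B}_\ast=\top^\ast$ then by IH $\val{B}=\top_\bbA$ and the same chain gives $\val{\alpha\bc B}=\top_\bbA$; in both subcases $\val{\alpha\bc B}_\ast=\top^\ast=$ matches. In the remaining subcase ($\val{\alpha}\neq 0$ and $\dr^\ast1\not\leq\val{B}_\ast$) we have $\val{\alpha\bc B}_\ast=\val{\alpha}\bc\val{B}_\ast$; since $\dr^\ast1\not\leq\val{B}_\ast$ forces $\val{B}_\ast\neq\top^\ast$ (as $\dr^\ast1\leq\top^\ast$ always), IH gives $\val{B}_\ast=\val{B}\in\bbA$, hence $\val{\alpha\bc B}_\ast=\val{\alpha}\bc\val{B}\in\bbA$, proving (1), and (2) is vacuous.

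\medskip

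\noindent The main obstacle is bookkeeping in the $\alpha\bc B$ case: one must be careful that the \emph{exact} Boolean condition triggering $\top^\ast$ in the definition of $\bc^\ast$ is provably equivalent, at the level of the algebra $\bbA$, to $\alpha\bc B$ being the top — and this is precisely where the axioms $B2$, $B7$ and $BD2$ (together with monotonicity of $\bc$) are needed, which is why the previous lemma had to verify them for $F^\ast$ first. Everything else is a routine propagation of the inductive hypothesis through join-irreducibility of $\top^\ast$ and the fact that $\bbA$ is a sub-Heyting-algebra of $\bbA^\ast$ except for the behaviour of $\to$ at $\top^\ast$.
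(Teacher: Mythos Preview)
Your proposal is correct and follows essentially the same approach as the paper: a simultaneous induction on the complexity of $A$, using join-irreducibility of $\top^\ast$ for the disjunction case, the explicit clauses of $\to^\ast$ for implication, and the axioms B2, B5, B7, BD2 together with monotonicity of $\bc$ to handle clause~(2) for the heterogeneous connectives $\br$ and $\bc$. The only place where your write-up is noticeably terser than the paper's is the implication case: you should spell out that when $\val{B\to C}_\ast=\top^\ast$ (i.e.\ $\val{B}_\ast\leq\val{C}_\ast$), the induction hypothesis forces $\val{B}\leq\val{C}$ in $\bbA$ (by a short case split on whether either side equals $\top^\ast$), so that $\val{B\to C}=\top_\bbA$.
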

\begin{proof}
We prove the two statements simultaneously	by induction on  $A$. The  cases of constants and atomic variables  are straightforward. The case of $A = B\land C$  immediately follows from the induction hypothesis. The case of  $A = B\lor C$ immediately follows from the induction hypothesis using  the join-irreducibility of $\top^\ast$. If $A=B\to C$, then $\val{A}_{\ast} = \val{B}_{\ast}\to^\ast\val{C}_{\ast}$. By definition of $\to^\ast$, if $\val{A}_\ast \neq \top^\ast$ then either (a) $\val{B}_{\ast} \not\leq \val{C}_{\ast}$ and $\val{B}_{\ast} \neq \top^\ast$, which implies that $\val{B}_{\ast} \neq \top^\ast\neq \val{C}_{\ast}$ in which case item 1 follows by induction hypothesis; or (b) $\val{B}_{\ast} \not\leq \val{C}_{\ast}$ and $\val{C}_{\ast}\neq\top^\ast$, which implies that $\val{C}_{\ast} = \val{C}$ by induction hypothesis. Then either (b1) $\val{B}_\ast = \top^\ast$, hence by induction hypothesis $\val{B} = \top^\bbA$ and $\val{A}_\ast = \val{C}_{\ast} = \val{C} = \val{A}$, as required; or (b2) $\val{B}_\ast \neq \top^\ast$, hence by induction hypothesis $\val{B}_{\ast} = \val{B}$ and we finish the proof as in case (a).  If $\val{A}_\ast=\top^\ast$, then either (c) $\val{B}_{\ast}  = \top^\ast = \val{C}_{\ast}$, which implies by induction hypothesis that  $\val{B}=\top_{\bbA} = \val{C}$, which %hence $\val{A}_\ast=\val{C}_\ast$ and $\val{A}=\val{C}$, and the induction hypothesis
yields $\val{A} = \top^\bbA$, as required; or (d) $\val{B}_{\ast} \leq \val{C}_{\ast}$, which implies %. Then either  If $\val{C}_\ast=\top^\ast$ then $\val{C}=\top$, hence $\val{A}_\ast=\top^\ast$ and $\val{A}=\top$. Finally assume that $\val{B}_\ast\neq\top^\ast$, which implies that $\val{B}_\ast=\val{B}$ (and likewise for $C$). If $\val{B}_\ast\leq\val{B}_\ast$ then
$\val{B}\leq\val{C}$ and hence $\val{A}=\top_\bbA$, as required. %If  $\val{B}_\ast\nleq\val{B}_\ast$ then $\val{A}_\ast=\val{A}$, which yields what we want.
	
	If  $A=\dc B$, then  $\val{A}_\ast=\dc^\ast\val{B}_\ast$. The definition of $\dc^\ast$ implies that $\val{A}_\ast\neq \top^\ast$, hence to finish the proof of this case we need to show that $\val{A}_{\ast} = \val{A}$.  If $\val{B}_\ast\neq \top^\ast$, then by induction hypothesis $\val{B}_\ast = \val{B}$, hence  $\val{A}_\ast=\dc^\ast\val{B}_\ast = \dc\val{B} = \val{\dc B} = \val{A}$, as required.  If  $\val{B}_\ast=\top^\ast$, then by induction hypothesis $\val{B}=\top^\bbA$, hence $\val{A}_\ast=\dc^\ast\val{B}_\ast = \dc^\ast\top^\ast =\dc\top^\bbA =  \dc\val{ B} =\val{\dc B} = \val{A}$, as required. %$\val{A}=\dc\val{B} = \dc\top^\bbA = \val{\dc B} = \val{A}$  then $\val{A}_\ast=\val{A}$. If $\val{B}_\ast=\top^\ast$ then $\val{B}=\top_\bbA$, and therefore $\val{A}_\ast=\val{A}$ by the definition of $\dc$.
	
	If $A=\dr\alpha$, item 2 is again vacuously true, and item 1   immediately follows from  the definition of $\dr^\ast$.
	
	If $A=\alpha\br\beta$, then $\val{A}_\ast=\val{\alpha}_\ast\br^{\!\ast}\val{\beta}_\ast = \val{\alpha}\br^{\!\ast}\val{\beta}$. Then by definition of $\br^\ast$, if $\val{A}_\ast\neq\top^\ast$, then $\val{A}_\ast=\val{A}$, as required, and if  $\val{A}_\ast = \top^\ast$, then either $\val{\alpha} = 0$ or $\val{\beta} = 1$; since axioms B5 and B7 hold in $F$, each case yields $\val{A} = \top^\bbA$, as required.
	
	Finally, if $A=\alpha\bc B$, then $\val{A}_\ast=\val{\alpha}_\ast\bc^{\!\ast}\val{B}_\ast = \val{\alpha}\bc^{\!\ast}\val{B}_\ast$. By definition of $\bc^{\!\ast}$, if $\val{A}_\ast\neq \top^\ast$, then $\val{\alpha} \neq 0$, $\val{A}_\ast=\val{\alpha}\bc\val{B}_\ast$, and  $\dr^{\ast}1\not \leq \val{B}_\ast$. The latter condition implies that $\val{B}_{\ast}\neq \top^\ast$, hence, by induction hypothesis, $\val{B}_{\ast} = \val{B}$, and so $\val{A}_\ast=\val{\alpha}\bc\val{B} = \val{A}$, as required.  If  $\val{A}_\ast = \top^\ast$, then either (a) $\val{\alpha}=0$, which implies by B2 that $\val{A} = \val{\alpha}\bc\val{B} = \top^\bbA$, as required; or (b) $\dr^{\ast}1 \leq \val{B}_\ast$, which implies by induction hypothesis that $\dr^{\ast}1 \leq \val{B}$. Hence, by  BD2 and monotonicity of $\bc$,
 %either (b1) $\val{B}_\ast = \top^\ast$, which implies by induction hypothesis that $\val{B} = \top^\bbA$, hence,   \[\top\leq \alpha\br 1\leq \alpha \bc\dr 1\leq \alpha\bc B \]
  %$\val{A}=\val{\alpha}\bc\val{B}$???; or (b2) $\val{B}_\ast \neq \top^\ast$, which implies by induction hypothesis that $\val{B}_\ast = \val{B}$, hence $\dr^{\ast}1 \leq \val{B}$, which implies that
 \[\top^\bbA\leq \val{\alpha}\br 1\leq \val{\alpha} \bc\dr 1\leq \val{\alpha}\bc \val{B}, \]
  which finishes the proof that $\val{A} = \val{\alpha\bc B} = \top^\bbA$, as required. %then $\val{A}_\ast=\top^\ast$ and $\val{A}=\top_\bbA$. Notice that $\val{\dr 1}_\ast=\val{\dr 1}$. We have $\val{B}\geq\val{\dr 1}$ if and only if $\val{B}_\ast\geq\val{\dr 1}$. This is enough to show what we want.
\end{proof}

 The {\em product} $F_1\times F_2$ of the heterogeneous LRC-algebras $F_1$ and $F_2$ is defined in the expected way, based on the product algebras
 $\bbA_1\times \bbA_2$ and $\bbQ_1\times \bbQ_2$, and  defining all (i.e.~both internal and external) operations component-wise. It can be readily verified that the resulting construction is a heterogeneous LRC-algebra. The product construction can be extended to algebraic LRC-models in the expected way, i.e.~by pairing the valuations. Such valuations extend as usual to $\mathsf{T}$-terms, and it can be proved  by a straightforward induction that $\val{a}_\times=(\val{a}_1,\val{a}_2)$.

\begin{prop}
	The disjunction property holds for the logic LRC.
\end{prop}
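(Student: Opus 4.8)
The plan is to argue by contraposition, combining algebraic completeness (Proposition~\ref{prop:algebraic completeness}) with the product construction and the $(\cdot)^\ast$-construction described above. So assume $\not\vdash_{\mathrm{LRC}}A$ and $\not\vdash_{\mathrm{LRC}}B$; the goal is to produce an algebraic LRC-model in which $A\vee B$ fails, whence $\not\vdash_{\mathrm{LRC}}A\vee B$ follows by soundness of H.LRC with respect to heterogeneous LRC-algebras (which is immediate from Definition~\ref{def:LRC-algebraic structure}). Applying Proposition~\ref{prop:algebraic completeness} with empty set of assumptions, there are algebraic LRC-models $\bbM_1$ and $\bbM_2$ with $\val{A}_1\neq\top_{\bbA_1}$ and $\val{B}_2\neq\top_{\bbA_2}$.

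The first step is to pass to a single model refuting both $A$ and $B$. Let $\bbM:=\bbM_1\times\bbM_2$, which is again an algebraic LRC-model. Using the identity $\val{a}_\times=(\val{a}_1,\val{a}_2)$ recorded above, the interpretation $\val{A}=(\val{A}_1,\val{A}_2)$ has a non-top first coordinate, so $\val{A}\neq\top_{\bbA}$; symmetrically $\val{B}\neq\top_{\bbA}$, where $\bbA=\bbA_1\times\bbA_2$.

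The second step is to apply the $(\cdot)^\ast$-construction to $\bbM$. Since $F^\ast$ is a heterogeneous LRC-algebra (as shown above), $\bbM^\ast$ is an algebraic LRC-model. Because $\val{A}\neq\top_{\bbA}$ and $\val{B}\neq\top_{\bbA}$, the contrapositive of Lemma~\ref{lemma:disjunctionkey}(2) gives $\val{A}_\ast\neq\top^\ast$ and $\val{B}_\ast\neq\top^\ast$, and then Lemma~\ref{lemma:disjunctionkey}(1) gives $\val{A}_\ast=\val{A}\leq\top_{\bbA}$ and $\val{B}_\ast=\val{B}\leq\top_{\bbA}$. Hence $\val{A\vee B}_\ast=\val{A}_\ast\vee\val{B}_\ast\leq\top_{\bbA}$, and since $\top_{\bbA}\neq\top^\ast$ this yields $\val{A\vee B}_\ast\neq\top^\ast=\top_{\bbA^\ast}$. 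Thus $A\vee B$ fails in the LRC-model $\bbM^\ast$, so $\not\vdash_{\mathrm{LRC}}A\vee B$, which is the contrapositive of the disjunction property.

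The argument is short once the machinery is in place, and the only point requiring care is the ordering of the two reductions: the $(\cdot)^\ast$-construction by itself does not suffice, since the interpretation of an arbitrary formula in $\bbM^\ast$ may jump to the fresh top $\top^\ast$; Lemma~\ref{lemma:disjunctionkey} guarantees this does not happen precisely for formulas already refuted in the base model $\bbM$. So one must first use the product to secure \emph{one} model simultaneously refuting both disjuncts, and only then enlarge its formula algebra — after which the fact that $\top^\ast$ lies strictly above every element of $\bbA$ (so that a join of two elements of $\bbA$ can never reach $\top^\ast$) closes the proof.
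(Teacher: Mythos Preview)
Your proof is correct and follows essentially the same approach as the paper's: contraposition, completeness to obtain two refuting models, product to combine them, then the $(\cdot)^\ast$-construction together with Lemma~\ref{lemma:disjunctionkey} to conclude that $A\vee B$ fails in $\bbM^\ast$. The only minor difference is cosmetic: the paper finishes by invoking the join-irreducibility of $\top^\ast$ directly (if $\val{A\vee B}_\ast=\top^\ast$ then one disjunct already equals $\top^\ast$, contradicting Lemma~\ref{lemma:disjunctionkey}(2)), whereas you additionally call part~(1) of the lemma to place both $\val{A}_\ast$ and $\val{B}_\ast$ inside $\bbA$ and bound their join by $\top_\bbA$; both routes are valid and amount to the same observation.
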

\begin{proof}
	If $B$ and $C$ are not LRC-theorems, by completeness, algebraic LRC-models $\mathbb{M}_1$ and $\mathbb{M}_2$ exist such that $\val{B}_1\neq\top_1$ and $\val{C}_2\neq\top_2$. Consider  the product model $\mathbb{M}: = \mathbb{M}_1\times \mathbb{M}_2$ as described above. Notice that $\val{B}\neq(\top_1,\top_2)$ and likewise for $C$. The model $\mathbb{M}^\ast$ does not satisfy $B\lor C$. Indeed, since $\top^\ast$ is join-irreducible,  if $\val{B\lor C}_\ast=\top^\ast$ then either $\val{B}_\ast=\top^\ast$ or $\val{C}_\ast=\top^\ast$. By Lemma \ref{lemma:disjunctionkey} this implies that either $\val{B}=(\top_1,\top_2)$ or $\val{C}=(\top_1,\top_2)$,  contradicting the assumptions.
\end{proof}

\section{The  calculus D.LRC}
\label{sec: Display-style sequent calculus D.LRC}

In the present section, we introduce the multi-type calculus D.LRC for the logic of resources and capabilities. As is typical of similar existing calculi, the language manipulated by this calculus is built up from structural  and  operational term constructors. In the tables below, each structural symbol in the upper rows corresponds to one or two logical (aka operational) symbols in the lower rows. The idea, which will be made precise in Section \ref{ssec:soundness}, is that each structural connective is interpreted as the corresponding logical connective on the left-hand (resp.\ right-hand) side (if it exists) when occurring in antecedent (resp.\ consequent) position.

\noindent As discussed in the previous section, the mathematical environment of heterogeneous LRC-algebras provides  natural interpretations for all connectives  of the basic language of LRC. In particular, on {\em perfect} heterogeneous LRC-algebras, these interpretations have the following extra properties: the interpretations of $\dc$ and $\dr$ are completely join-preserving, that of $\bc$ is completely join-reversing in its first coordinate and order preserving in its second coordinate, and $\br$ is completely join-reversing in its first coordinate and completely meet-preserving in its second coordinate. This implies that, in each perfect heterogeneous LRC-algebra,
\begin{itemize}
 \item  $\dc$ and $\dr$ have right adjoints, denoted $\adc$ and $\adr$ respectively;
\item  $\bc$ has a Galois-adjoint $\abc$ in its first coordinate, and $\br$ has a Galois-adjoint $\abr$ in its first coordinate and a left adjoint $\labr$ in its second coordinate.
\end{itemize}
     Hence,  the following connectives have a natural interpretation on perfect heterogeneous LRC-algebras:
\begin{eqnarray}
\adc  & :&   \mathsf{Fm} \to \mathsf{Fm}                             \\
\adr  & :&   \mathsf{Fm} \to \mathsf{Res} \\
\abc & :&  \mathsf{Fm} \times \mathsf{Fm} \to \mathsf{Res}\\
\abr & :&  \mathsf{Fm} \times \mathsf{Res} \to \mathsf{Res}\\
\labr & :&  \mathsf{Res} \times \mathsf{Fm} \to \mathsf{Res}.
%{\mbla}_{\!1}, {\mla}_{\!1} & : & \mathsf{Fm}\times \mathsf{Fm} \to \mathsf{Act}\\
%{\mbla}_{\!0}, {\mla}_{\!0} & :&  \mathsf{Fm}  \times \mathsf{Fm} \to \mathsf{TAct}.
\end{eqnarray}

\begin{itemize}
	\item Structural and operational symbols for pure $\mathsf{Fm}$-connectives:
	\begin{center}
		\begin{tabular}{|r|c|c|c|c|c|c|c|c|}
			\hline
			\scriptsize{Structural symbols} & \mc{2}{c|}{I} & \mc{2}{c|}{$;$} & \mc{2}{c|}{$>$} & \mc{2}{c|}{$(<)$}\\
			\hline
			\scriptsize{Operational symbols} & $\top$ & $\bot$ & $\pand$ & $\por$ & $(\pdra)$ & $\pra$ & $(\pdla)$ & $(\leftarrow)$\\
			\hline
		\end{tabular}
	\end{center}
	
		\item Structural and operational symbols for pure $\mathsf{Res}$-connectives:
		\begin{center}
			\begin{tabular}{|r|c|c|c|c|c|c|c|c|c|c|c|c|c|c|c|c}
				\hline
				\scriptsize{Structural symbols} & \mc{2}{c|}{$\Phi$} & \mc{2}{c|}{$\odot$} & \mc{2}{c|}{$\RAND$}    & \mc{2}{c|}{$\RCDOT$} & \mc{2}{c|}{$\LCDOT$} & \mc{2}{c|}{$\RPLUS$} & \mc{2}{c|}{$(\LPLUS)$} \\
				\hline
				\scriptsize{Operational symbols} & $1$ & $0$ & $\cdot$ & \phantom{$\cdot$} & $\rand$ & $\ror$  & \phantom{$(_\cdot\!\backslash)$} & $(_\cdot\!\backslash)$ & \phantom{$(/_{\!\cdot})$} & $(/_{\!\cdot})$ & $(_\ror\!\backslash)$ & $(_\rand\!\backslash)$ & $(/_{\!\ror})$ & $(/_{\!\rand})$ \\
				\hline
			\end{tabular}
		\end{center}

	\item Structural and operational symbols for the modal operators:
		\begin{center}
			\begin{tabular}{|r|c|c|c|c|c|c|c|c|}
				\hline
				\scriptsize{Structural symbols} & \mc{2}{c|}{$\DC$} & \mc{2}{c|}{$\BC$} & \mc{2}{c|}{$\DR$} & \mc{2}{c|}{$\BR$} \\
				\hline
				\scriptsize{Operational symbols} & $\dc$ & $\phantom{\dc}$ & $\phantom{\bc}$ & $\bc$ & $\dr$ & $\phantom{\dr}$ & $\phantom{\br}$ & $\br$ \\
				\hline
			\end{tabular}
		\end{center}

	\item Structural and operational symbols for the adjoints and residuals of the modal operators:
\begin{center}
\begin{tabular}{|r|c|c|c|c|c|c|c|c|c|c|}
\hline
\scriptsize{Structural symbols}    & \mc{2}{c|}{$\ADC$}                   & \mc{2}{c|}{$\ABC$}                   & \mc{2}{c|}{$\ADR$} & \mc{2}{c|}{$\LABR$} & \mc{2}{c|}{$\ABR$} \\
\hline
\scriptsize{Operational symbols} & $\phantom{(\adc)}$ & $(\adc)$ & $\phantom{(\abc)}$ & $(\abc)$ & $\phantom{(\adr)}$& $(\adr)$ & $(\labr)$ & $\phantom{(\labr)}$ & $\phantom{(\abr)}$ & $(\abr)$ \\
\hline
\end{tabular}
\end{center}
\end{itemize}

The display-type calculus $\mathbf{D.LRC}$ consists of the following display postulates, structural rules, and operational rules:

			\begin{enumerate}
			
				\item Identity and cut rules: %\marginnote{Motivate our choice of Cut: not all cuts are sound.}
				
				\begin{center}
					\begin{tabular}{c c c}
						$p \vdash p$
						& &
						$a\vdash a$  \\
						& & \\
						\AX$(X \fCenter Y)[A]^{succ}$
						\AX$ A \fCenter Z$
						\BI$ (X \fCenter Y)[Z/A]^{succ}$
						\DisplayProof
						& &
						\AX$\Gamma \fCenter \alpha$
						\AX$ \alpha \fCenter \Delta$
						\BI$ \Gamma \fCenter \Delta$
						\DisplayProof
						 \\
						\end{tabular}
				\end{center}

				\item Display postulates for pure  $\mathsf{Fm}$-connectives:

				\begin{center}
					\begin{tabular}{cccc}
						\AX$X \,; Y \fCenter Z$
						\doubleLine
						\UI$Y \fCenter X > Z$
						\DisplayProof
						&
						\AX$Z \fCenter X \,; Y$
						\doubleLine
						\UI$X > Z \fCenter Y$
						\DisplayProof
						&
						\AX$X \,; Y \fCenter Z$
						\doubleLine
						\UI$X \fCenter Z < Y$
						\DisplayProof
						&
						\AX$Z \fCenter X \,; Y$
						\doubleLine
						\UI$Z < Y \fCenter X$
						\DisplayProof
						 \\
					\end{tabular}
				\end{center}

				\item Display postulates for pure $\mathsf{Res}$-connectives:
				\begin{center}
					\begin{tabular}{cccc}
						\AX$\Gamma \RAND \Delta \fCenter \Sigma$
						\doubleLine
						\UI$\Delta \fCenter \Gamma \RPLUS \Sigma$
						\DisplayProof
						&
						\AX$\Gamma \RAND \Delta \fCenter \Sigma$
						\doubleLine
						\UI$\Gamma \fCenter \Sigma \LPLUS \Delta$
						\DisplayProof
						&
						\AX$\Gamma \fCenter \Delta \RAND \Sigma$
						\doubleLine
						\UI$\Delta \RPLUS \Gamma \fCenter \Sigma$
						\DisplayProof
						&
						\AX$\Gamma \fCenter \Delta \RAND \Sigma$
						\doubleLine
						\UI$\Gamma \LPLUS \Sigma \fCenter \Delta$
						\DisplayProof
						\\

 & & & \\

						\mc{2}{c}{
						\AX$\Gamma \odot \Delta \fCenter \Sigma$
						\doubleLine
						\UI$\Delta \fCenter \Gamma \RCDOT \Sigma$
						\DisplayProof}
						&
						\mc{2}{c}{
						\AX$\Gamma \odot \Delta \fCenter \Sigma$
						\doubleLine
						\UI$\Gamma \fCenter \Sigma \lessdot \Delta$
						\DisplayProof}
						 \\
					\end{tabular}
				\end{center}

				\item Display postulates for the modal operators: % $\dc$, $\dr$, $\bc$ and $\br$:

				\begin{center}
					\begin{tabular}{ccccc}

							\AX$\DC X \fCenter Y$
							\doubleLine
							\UI$X \fCenter \ADC Y$
							\DisplayProof
						&
							\AX$\DR \Gamma \fCenter X$
							\doubleLine
							\UI$\Gamma \fCenter \ADR X$
							\DisplayProof
						&
							\AX$X \fCenter \Gamma \BC Y$
							\doubleLine
							\UI$\Gamma \fCenter X \ABC Y$
							\DisplayProof
						&
							\AX$X \fCenter \Gamma \BR \Delta$
							\doubleLine
							\UI$\Gamma \LABR X \fCenter \Delta$
							\DisplayProof
						&
							\AX$X \fCenter \Gamma \BR \Delta$
							\doubleLine
							\UI$\Gamma \fCenter X \ABR \Delta$
							\DisplayProof

						\\	
					\end{tabular}
				\end{center}

				\item Pure $\mathsf{Fm}$-type structural rules: %\marginnote{Add remark: if we add the classical Grishin rule, we obtain a calculus for classical logic.}
				\begin{center}
					\begin{tabular}{rlcrl}
						\AX$X \fCenter Y$
						\doubleLine
						\LeftLabel{\fns$\textrm{I}_{L}$}
						\UI$\textrm{I}\,; X \fCenter Y$
						\DisplayProof
						&
						\AX$Y \fCenter X$
						\doubleLine
						\RightLabel{\fns$\textrm{I}_{R}$}
						\UI$Y \fCenter X\,; \textrm{I}$
						\DisplayProof
						& &
						\AX$Y \,; X \fCenter Z$
						\LeftLabel{\fns$E_L$}
						\UI$X \,; Y \fCenter Z $
						\DisplayProof
						&
						\AX$Z \fCenter X \,; Y$
						\RightLabel{\fns$E_R$}
						\UI$Z \fCenter Y \,; X$
						\DisplayProof
						\\
						& & & & \\
						\AX$Y \fCenter Z$
						\LeftLabel{\fns$W_L$}
						\UI$X\,; Y \fCenter Z$
						\DisplayProof
						&
						\AX$Z \fCenter Y$
						\RightLabel{\fns$W_R$}
						\UI$Z \fCenter Y\,; X$
						\DisplayProof
						& &
						\AX$X \,; X \fCenter Y$
						\LeftLabel{\fns$C_L$}
						\UI$X \fCenter Y $
						\DisplayProof
						&
						\AX$Y \fCenter X \,; X$
						\RightLabel{\fns$C_R$}
						\UI$Y \fCenter X$
						\DisplayProof
						\\
						& & & & \\
						\mc{2}{c}{
							\AX$X \,; (Y \,; Z) \fCenter W$
							\doubleLine
							\LeftLabel{\fns$A_{L}$}
							\UI$(X \,; Y) \,; Z \fCenter W $
							\DisplayProof}
						& &
						\mc{2}{c}{
							\AX$W \fCenter (Z \,; Y) \,; X$
							\doubleLine
							\RightLabel{\fns$A_{R}$}
							\UI$W \fCenter Z \,; (Y \,; X)$
							\DisplayProof}
					\end{tabular}
				\end{center}

				\item Pure $\mathsf{Res}$-type structural rules:
				
				\begin{center}
					\begin{tabular}{cccc}
						\AX$\Gamma \odot \Phi \fCenter \Delta$
						\doubleLine
						\LeftLabel{\fns$\Phi_{L1}$}
						\UI$\Gamma \fCenter \Delta$
						\doubleLine
						\LeftLabel{\fns$\Phi_{L2}$}
						\UI$\Phi \odot \Gamma \fCenter \Delta$
						\DisplayProof
						&
						\AX$\Gamma \fCenter \Delta$
						\doubleLine
						\RightLabel{\fns$\Phi_{R}$}
						\UI$\Gamma \fCenter \Delta \RAND \Phi$
						\DisplayProof
						&
						\AX$\Gamma \odot (\Delta \odot \Sigma) \fCenter \Pi$
						\doubleLine
						\LeftLabel{\fns$A_L$}
						\UI$(\Gamma \odot \Delta) \odot \Sigma \fCenter \Pi$
						\DisplayProof
						&
						\AX$\Phi \fCenter \Delta$
						\LeftLabel{\fns$W_\Phi$}
						\UI$\Gamma \fCenter \Delta$
						\DisplayProof

						\\

 & & & \\

						\AX$\Gamma \fCenter \Delta$
						\LeftLabel{\fns$W_L$}
						\UI$\Gamma \RAND \Sigma \fCenter \Delta$
						\DisplayProof
						&
						\AX$\Gamma \fCenter \Delta$
						\RightLabel{\fns$W_R$}
						\UI$\Gamma \fCenter \Delta \RAND \Sigma$
						\DisplayProof
						&
						\AX$\Gamma \RAND \Gamma \fCenter \Delta$
						\LeftLabel{\fns$C_L$}
						\UI$\Gamma \fCenter \Delta$
						\DisplayProof
						&
						\AX$\Gamma \fCenter \Delta \RAND \Delta$
						\RightLabel{\fns$C_R$}
						\UI$\Gamma \fCenter \Delta$
						\DisplayProof
						\\

 & & & \\

						\AX$\Gamma \RAND \Delta \fCenter \Sigma$
						\LeftLabel{\fns$E_L$}
						\UI$\Delta \RAND \Gamma \fCenter \Sigma$
						\DisplayProof
						&
						\AX$\Sigma \fCenter \Gamma \RAND \Delta$
						\RightLabel{\fns$E_R$}
						\UI$\Sigma \fCenter \Delta \RAND \Gamma$
						\DisplayProof
						&
						\AX$\Sigma \RAND (\Delta \RAND \Gamma) \fCenter \Pi$
						\doubleLine
						\LeftLabel{\fns$A_L$}
						\UI$(\Sigma \RAND \Delta) \RAND \Gamma \fCenter \Pi$
						\DisplayProof
						&
						\AX$\Pi \fCenter \Sigma \RAND (\Delta \RAND \Gamma)$
						\doubleLine
						\RightLabel{\fns$A_R$}
						\UI$\Pi \fCenter (\Sigma \RAND \Delta) \RAND \Gamma$
						\DisplayProof
						\\

 & & & \\

						\mc{4}{c}{
							\AX$\Pi \fCenter (\Gamma \RCDOT \Delta) \RAND (\Gamma \RCDOT \Sigma)$
							\RightLabel{\fns$dis$}
							\UI$\Pi \fCenter \Gamma \RCDOT (\Delta \RAND \Sigma)$
							\DisplayProof}
						\\

					\end{tabular}
				\end{center}

\item Structural rules corresponding to the D-axioms:

\begin{center}
\begin{tabular}{cccc}
\bottomAlignProof
\AX$X \fCenter \ADC Y \,; \ADC Z$
\RightLabel{D1}
\UI$X \fCenter \ADC (Y \,; Z)$
\DisplayProof
 &
\bottomAlignProof
\AX$\Gamma \fCenter \ADR X \RAND \ADR Y$
\RightLabel{D3}
\UI$\Gamma \fCenter \ADR (X \,; Y)$
\DisplayProof
 &
\bottomAlignProof
\AX$X \fCenter \I$
\RightLabel{D2}
\UI$X \fCenter \ADC \I$
\DisplayProof
 &
\bottomAlignProof
\AX$\Gamma \fCenter \Phi$
\RightLabel{D4}
\UI$\Gamma \fCenter \ADR \I$
\DisplayProof
 \\
%\bottomAlignProof
%\AX$\I \fCenter X$
%\doubleLine
%\RightLabel{\textcolor{red}{D5}}
%\UI$\DC \I \fCenter X$
%\DisplayProof
% &
%\bottomAlignProof
%\AX$\I \fCenter X$
%\doubleLine
%\RightLabel{\textcolor{red}{D6}}
%\UI$\DR \Phi \fCenter X$
%\DisplayProof
% \\
\end{tabular}
\end{center}

\item Structural rules corresponding to the B-axioms:

\begin{center}
\begin{tabular}{ccc}
\bottomAlignProof
\AX$\Gamma \fCenter (Y \ABR \Delta) \RAND (Z \ABR \Delta)$
\RightLabel{\fns B4}
\UI$\Gamma \fCenter (Y \,; Z) \ABR \Delta$
\DisplayProof
%\bottomAlignProof
%\AX$\Gamma \fCenter \Phi$
%\RightLabel{B2}
%\UI$\Gamma \fCenter \I \ABC X$
%\DisplayProof
 &
\bottomAlignProof
\AX$\Gamma \fCenter (Y \ABC W) \RAND (Z \ABC W)$
\RightLabel{\fns B1}
\UI$\Gamma \fCenter (Y \,; Z) \ABC W$
\DisplayProof
 &
 \\
 & & \\
%\bottomAlignProof
%\AX$\Gamma \fCenter \Phi$
%\RightLabel{B5}
%\UI$\Gamma \fCenter \I \ABR \Delta$
%\DisplayProof

\bottomAlignProof
\AX$(\Gamma \LABR X) \RAND (\Gamma \LABR Y) \fCenter \Delta$
\LeftLabel{\fns B6}
\UI$\Gamma \LABR (X \,; Y) \fCenter \Delta$
\DisplayProof
 &
\bottomAlignProof
\AX$X \fCenter \Gamma \BC (\Delta \BC Y)$
\RightLabel{\fns B3}
\UI$X \fCenter \Gamma \odot \Delta \BC Y$
\DisplayProof
 &
\bottomAlignProof
\AX$\Phi \fCenter \Delta$
\LeftLabel{\fns B7}
\UI$\Gamma \LABR \I \fCenter \Delta$
\DisplayProof
% &
%\bottomAlignProof
%\AX$\Gamma \fCenter \Delta$
%\LeftLabel{\fns B8}
%\UI$\Gamma \LABR \I \fCenter \Delta$
%\DisplayProof
 \\
\end{tabular}
\end{center}

\item Structural rules corresponding to the BD-axioms:

\begin{center}
\begin{tabular}{cc }
%\bottomAlignProof
%\AX$X \fCenter \Gamma \BC (Y \,; \ADC (\DR \Gamma > Z))$
%\RightLabel{BD1}
%\UI$X \fCenter (\Gamma \BC Y) \,; Z$
%\DisplayProof
% &
\bottomAlignProof
\AX$X \fCenter \Gamma \BC \ADC Y$
\RightLabel{BD1}
\UI$X \fCenter \DR \Gamma > Y$
\DisplayProof
&
\bottomAlignProof
\AX$X \fCenter \Gamma \BR \, \ADR Y$
\RightLabel{BD2}
\UI$X \fCenter \Gamma \BC Y$
\DisplayProof
 \\
\end{tabular}
\end{center}

				\item Introduction rules for pure $\mathsf{Fm}$-connectives (in the presence of the exchange rules $E_L$ and $E_R$, the structural connective $<$ and the corresponding operational connectives $\pdla$ and $\pla$ are redundant and they are omitted):

				\begin{center}
				%\hspace{-2.8em}
					\begin{tabular}{rlrl}
						\AXC{\phantom{$\gbot \fCenter \textrm{I}$}}
						\LeftLabel{\fns$\bot_L$}
						\UI$\bot \fCenter \textrm{I}$
						\DisplayProof
						&
						\AX$X \fCenter \textrm{I}$
						\RightLabel{\fns$\bot_R$}
						\UI$X \fCenter \bot$
						\DisplayProof
						&
						\AX$\textrm{I} \fCenter X$
						\LeftLabel{\fns$\top_L$}
						\UI$\top \fCenter X$
						\DisplayProof
						&
						\AXC{\phantom{$\textrm{I} \fCenter \top$}}
						\RightLabel{\fns$\top_R$}
						\UI$\textrm{I} \fCenter \top$
						\DisplayProof
						\\
						& & & \\
						\AX$A \,; B \fCenter X$
						\LeftLabel{\fns$\pand_L$}
						\UI$A \pand B \fCenter X$
						\DisplayProof
						&
						\AX$X \fCenter A$
						\AX$Y \fCenter B$
						\RightLabel{\fns$\pand_R$}
						\BI$X \,; Y \fCenter A \pand B$
						\DisplayProof
						&
						\AX$A \fCenter X$
						\AX$B \fCenter Y$
						\LeftLabel{\fns$\por_L$}
						\BI$A \por B \fCenter X \,; Y$
						\DisplayProof
						&
						\AX$X \fCenter A \,; B$
						\RightLabel{\fns$\por_R$}
						\UI$X \fCenter A \por B$
						\DisplayProof
						\\
						& & & \\
						\mc{2}{r}{\AX$X \fCenter A$
						\AX$B \fCenter Y$
						\LeftLabel{\fns$\pra_L$}
						\BI$A \pra B \fCenter X > Y$
						\DisplayProof}
						&
						\mc{2}{l}{\AX$X \fCenter A > B$
						\RightLabel{\fns$\pra_R$}
						\UI$X \fCenter A \pra B$
						\DisplayProof}
						 \\
						%&
						%\AX$A > B \fCenter Z$
						%\LeftLabel{\fns$(\pdra_L)$}
						%\UI$A \pdra B \fCenter Z$
						%\DisplayProof
						%&
						%\AX$A \fCenter X$
						%\AX$Y \fCenter B$
						%\RightLabel{\fns$(\pdra_R)$}
						%\BI$X > Y \fCenter A \pdra B$
						%\DisplayProof
						%%\\
						%& & & \\
						%\AX$X \fCenter A$
						%\AX$B \fCenter Y$
						%\LeftLabel{\fns$(\leftarrow_L)$}
						%\BI$A \leftarrow B \fCenter X < Y$
						%\DisplayProof
						%&
						%\AX$X \fCenter A < B$
						%\RightLabel{\fns$(\leftarrow_R)$}
						%\UI$X \fCenter A \leftarrow B$
						%\DisplayProof
						%&
						%\AX$A < B \fCenter Z$
						%\LeftLabel{\fns$(\pdla_L)$}
						%\UI$A \pdla B \fCenter Z$
						%\DisplayProof
						%&
						%\AX$A \fCenter X$
						%\AX$Y \fCenter B$
						%\RightLabel{\fns$(\pdla_R)$}
						%\BI$X < Y \fCenter A \pdla B$
						%\DisplayProof
					\end{tabular}
				\end{center}

				\item Introduction rules for pure $\mathsf{Res}$-connectives:

				\begin{center}
					\begin{tabular}{rlrl}
						\AXC{\phantom{$\gbot \fCenter \textrm{I}$}}
						\LeftLabel{\fns$0_L$}
						\UI$0 \fCenter \Phi$
						\DisplayProof
						&
						\AX$\Gamma \fCenter \Phi$
						\RightLabel{\fns$0_R$}
						\UI$\Gamma \fCenter 0$
						\DisplayProof
						&
						\AX$\Phi \fCenter \Gamma$
						\LeftLabel{\fns$1_L$}
						\UI$1 \fCenter \Gamma$
						\DisplayProof
						&
						\AXC{\phantom{$\textrm{I} \fCenter \top$}}
						\RightLabel{\fns$1_R$}
						\UI$\Phi \fCenter 1$
						\DisplayProof
						\\
						& & & \\
						\AX$\alpha \odot \beta \fCenter \Gamma$
						\LeftLabel{\fns$\cdot_L$}
						\UI$\alpha \cdot\beta \fCenter \Gamma$
						\DisplayProof
						&
						\AX$\Gamma \fCenter \alpha$
						\AX$\Delta \fCenter \beta$
						\RightLabel{\fns$\cdot_R$}
						\BI$\Gamma \odot \Delta \fCenter \alpha \cdot \beta$
						\DisplayProof
						&
						\AX$\alpha \fCenter \Gamma$
						\AX$\beta \fCenter \Delta$
						\LeftLabel{\fns$\ror_L$}
						\BI$\alpha \ror \beta \fCenter \Gamma \RAND \Delta$
						\DisplayProof
						&
						\AX$\Gamma \fCenter \alpha \RAND \beta$
						\RightLabel{\fns$\ror_R$}
						\UI$\Gamma \fCenter \alpha \ror \beta$
						\DisplayProof
						\\
					\end{tabular}
				\end{center}

				\item Introduction rules for the modal operators: %$\dc$, $\dr$, $\bc$ and $\br$:
				\begin{center}
					\begin{tabular}{cccc}
						\bottomAlignProof
						\AX$\DC A \fCenter X$
						\LeftLabel{$\dc_{L}$}
						\UI$\dc A \fCenter X$
						\DisplayProof
						&
						\bottomAlignProof
						\AX$X \fCenter A$
						\RightLabel{$\dc_{R}$}
						\UI$\DC X \fCenter \dc A$
						\DisplayProof
						&
						\bottomAlignProof
						\AX$\Gamma \fCenter \alpha$
						\RightLabel{$\bc_{L}$}
						\AX$A \fCenter X$
						\BI$\alpha \bc A \fCenter \Gamma \BC X$
						\DisplayProof
						&
						\bottomAlignProof
						\AX$X \fCenter \alpha \BC A$
						\RightLabel{$\bc_{R}$}
						\UI$X \fCenter \alpha \bc A$
						\DisplayProof
						\\ & & & \\
						\bottomAlignProof
						\AX$\DR \alpha \fCenter X$
						\LeftLabel{$\dr_{L}$}
						\UI$\dr \alpha \fCenter X$
						\DisplayProof
						&
						\bottomAlignProof
						\AX$\Gamma \fCenter \alpha$
						\RightLabel{$\dr_{R}$}
						\UI$\DR \Gamma \fCenter \dr \alpha$
						\DisplayProof
						&
						\bottomAlignProof
						\AX$\Gamma \fCenter \alpha$
						\RightLabel{$\br_{L}$}
						\AX$\beta \fCenter \Delta$
						\BI$\alpha \br \alpha \fCenter \Gamma \BR \Delta$
						\DisplayProof
						&
						\bottomAlignProof
						\AX$\Gamma \fCenter \alpha \BR \alpha$
						\RightLabel{$\br_{R}$}
						\UI$\Gamma \fCenter \alpha \br \alpha$
						\DisplayProof
					\end{tabular}
				\end{center}

			\end{enumerate}

We conclude the present section by listing some observations about D.LRC. Firstly, notice that, although very similar in spirit to a display calculus \cite{Belnap,Wa98}, D.LRC does not enjoy the display property, the reason being that a display rule for displaying substructures in the scope of the second coordinate of $\BC$ occurring in consequent position would not be sound. This is the reason why a more general form of cut rule, sometimes referred to as {\em surgical cut}, has been included than the standard one in display calculi where both cut formulas occur in display. However, as discussed in \cite{Trends}, calculi without display property can still verify the assumptions of some Belnap-style cut elimination metatheorem. In Section \ref{ssec:cut elim subformula}, we will verify that this is the case of D.LRC. Secondly, as usual, the version of D.LRC on a classical propositional base can be obtained by adding e.g.\ the following {\em Grishin rules}:

\begin{center}
\begin{tabular}{cc}
\AX$X > (Y \,; Z) \fCenter W$
\UI$(X > Y) \,; Z \fCenter W$
\DisplayProof
 &
\AX$X \fCenter Y > (Z \,; W)$
\UI$X \fCenter (Y > Z) \,; W$
\DisplayProof
\end{tabular}
\end{center}

%\marginnote{Add the following observation (maybe as a footnote?): Note that
Thirdly, the rule $W_\Phi$ encodes (and is used to derive)  $\alpha\cdot\beta\vdash \alpha$, $\alpha\cdot\beta\vdash \beta$, $\alpha\vdash 1$, B2 and B5.   %$\cdot$ has Weakening, 2. the neutral element of $\sqcap$ = the neutral element of $\cdot$ = top.}
%\marginnote{If we have Weakening on resources of $\ror$ ($W_R$), then we do not need the rules B2 and B5, hence we have removed them.}

%%%

\section{Basic properties of D.LRC}
In the present  section, we verify that the calculus D.LRC is sound w.r.t.\ the semantics of perfect heterogeneous LRC-algebras (cf.\ Definition \ref{def:LRC-algebraic structure}), is syntactically  complete w.r.t.\ the Hilbert calculus for LRC introduced in Section \ref{ssec:HLRC}, enjoys cut-elimination and subformula property, and conservatively extends the Hilbert calculus of Section \ref{ssec:HLRC}. %Our proof is indirect and relies on the fact that the Hilbert calculus H.LRC is complete w.r.t.\ the same semantics.
\subsection{Soundness}
\label{ssec:soundness}

In the present subsection, we outline the  verification of the soundness of the rules of D.LRC w.r.t.\ the semantics of perfect heterogeneous LRC-algebras (cf.\ Definition \ref{def:LRC-algebraic structure}). The first step consists in interpreting structural symbols as logical symbols according to their (precedent or consequent) position,\footnote{For any (formula or resource) sequent $x\vdash y$ in the language of D.LRC, we define the signed generation trees $+x$ and $-y$ by labelling the root of the generation tree of $x$ (resp.\ $y$) with the sign $+$ (resp.\ $-$), and then propagating the sign to all nodes according to the polarity of the coordinate of the connective assigned to each node. Positive (resp.\ negative) coordinates propagate the same (resp.\ opposite) sign to the corresponding child node. The only negative coordinates are the first coordinates of $>$, $\BC$ and $\BR$. Then, a substructure $z$ in $x\vdash y$ is in {\em precedent} (resp.\ {\em consequent}) {\em position} if the sign of its root node as a subtree of $+x$ or $-y$ is  $+$ (resp.\ $-$).}
as indicated in the synoptic tables at the beginning of Section \ref{sec: Display-style sequent calculus D.LRC}. This makes it possible to interpret sequents as inequalities, and rules as quasi-inequalities. For example, the rules on the left-hand side below are interpreted as the quasi-inequalities on the right-hand side:
\begin{center}
\begin{tabular}{rcl}
%\AX$X \fCenter \Gamma \BC (Y \,; \ADC (\DR \Gamma > Z))$
%\RightLabel{BD1}
%\UI$X \fCenter (\Gamma \BC Y) \,; Z$
%\DisplayProof
%&$\quad\rightsquigarrow\quad$&
%$\forall x\forall y\forall z\forall \gamma[x\leq  \gamma \bc (y \lor \adc (\dr \gamma \pra z)) \Rightarrow %x\leq (\gamma \bc y) \lor z]$
%\\
%&&\\
\AX$X \fCenter \Gamma \BC \ADC Y$
\RightLabel{BD1}
\UI$X \fCenter \DR \Gamma > Y$
\DisplayProof
&$\quad\rightsquigarrow\quad$& $\forall \gamma\forall x\forall y[x\leq \gamma \bc \adc \, y\Rightarrow  x \leq \dr\gamma \pra y ]$
\\
&&\\
\AX$X \fCenter \Gamma \BR \, \ADR Y$
\RightLabel{BD2}
\UI$X \fCenter \Gamma \BC Y$
\DisplayProof
&$\quad\rightsquigarrow\quad$& $\forall x\forall \gamma\forall y[x\leq \gamma \br \adr y\Rightarrow x \leq \gamma \bc y].$\\
\end{tabular}
\end{center}

The verification that the rules of D.LRC are sound on perfect LRC-algebras then consists in verifying the validity of their corresponding quasi-inequalities in perfect LRC-algebras. The validity of these quasi-inequalities follows straightforwardly from two observations. The first observation is that the quasi-inequality corresponding to each rule is obtained by running the algorithm ALBA on the axiom  of the Hilbert-style presentation of Section \ref{ssec:HLRC} bearing  the same name as the rule. Below we perform the ALBA reduction on  the axiom BD1:

\begin{center}
	\begin{tabular}{c l}
		& $\forall \alpha\forall p[\dr \alpha \land \alpha \bc p \leq \dc p ]$\\
		iff & $\forall \alpha\forall p\forall \gamma\forall x\forall y[(\gamma\leq \alpha\ \&\ x\leq \alpha \bc p\ \&\ \dc p\leq y)\Rightarrow \dr \gamma \land x \leq y ]$\\
		iff & $\forall \alpha\forall p\forall \gamma\forall x\forall y[(\gamma\leq \alpha\ \&\ x\leq \alpha \bc p\ \&\  p\leq \adc \, y)\Rightarrow \dr \gamma \land x \leq y ]$\\
		iff & $\forall \gamma\forall x\forall y[x\leq \gamma \bc \adc \, y\Rightarrow \dr \gamma \land x \leq y ]$\\
		iff & $\forall \gamma\forall x\forall y[x\leq \gamma \bc \adc \, y\Rightarrow  x \leq \dr\gamma \pra y ].$\\
	\end{tabular}
\end{center}

%\begin{center}
%\begin{tabular}{c l}
%& $\forall \alpha\forall p\forall q[\alpha \bc (p \lor q) \leq \alpha \bc p \lor (\dr \alpha \land \dc q)]$\\
%iff & $\forall \alpha\forall p\forall q\forall x\forall y\forall z\forall \gamma[(x\leq \alpha \bc (p \lor q)\ \&\ p\leq y\ \&\ \dr \alpha \land \dc q\leq z\ \&\ \gamma\leq \alpha)  \Rightarrow x\leq \gamma \bc y \lor z]$\\
%iff & $\forall \alpha\forall p\forall q\forall x\forall y\forall z\forall \gamma[(x\leq \alpha \bc (p \lor q)\ \&\ p\leq y\ \&\    \dc q\leq \dr \alpha \pra z\ \&\ \gamma\leq \alpha)  \Rightarrow x\leq \gamma \bc y \lor z]$\\
%iff & $\forall \alpha\forall p\forall q\forall x\forall y\forall z\forall \gamma[(x\leq \alpha \bc (p \lor q)\ \&\ p\leq y\ \&\    q\leq \adc (\dr \alpha \pra z)\ \&\ \gamma\leq \alpha)  \Rightarrow x\leq \gamma \bc y \lor z]$\\
%iff & $\forall x\forall y\forall z\forall \gamma[x\leq \gamma \bc (y \lor \adc (\dr \alpha \pra z))  \Rightarrow x\leq \gamma \bc y \lor z].$\\
%\end{tabular}
%\end{center}
It can be readily checked that the ALBA manipulation rules  applied in the computation above (adjunction rules and Ackermann rules) are sound on perfect LRC-algebras. As discussed in \cite{GMPTZ}, the soundness of these rules only depends on the order-theoretic properties of the interpretation of the logical connectives and their adjoints and residuals. The fact that some of these maps are not internal operations but have different domains and codomains does not make any substantial difference. A more substantial difference with the setting of \cite{GMPTZ} might be  in principle the fact that the connective $\bc$ is only monotone---rather than normal---in its second coordinate. However, notice that each manipulation in the chain of equivalences above involving that coordinate is an application of the Ackermann rule of ALBA, which relies on no more than monotonicity. The second observation is that the axioms of the Hilbert-style presentation of Section \ref{ssec:HLRC} are valid by definition on perfect LRC-algebras. We conclude the present subsection reporting the ALBA-reduction of (the condition expressing the validity of) axiom  BD2.

%\begin{center}
%\begin{tabular}{c l}
%& $\forall \alpha\forall p[\dr \alpha \land \alpha \bc p \leq \dc p ]$\\
%iff & $\forall \alpha\forall p\forall \gamma\forall x\forall y[(\gamma\leq \alpha\ \&\ x\leq \alpha \bc p\ %\&\ \dc p\leq y)\Rightarrow \dr \gamma \land x \leq y ]$\\
%iff & $\forall \alpha\forall p\forall \gamma\forall x\forall y[(\gamma\leq \alpha\ \&\ x\leq \alpha \bc p\ \&\  p\leq \adc y)\Rightarrow \dr \gamma \land x \leq y ]$\\
%iff & $\forall \gamma\forall x\forall y[x\leq \gamma \bc \adc y\Rightarrow \dr \gamma \land x \leq y ]$\\
%iff & $\forall \gamma\forall x\forall y[x\leq \gamma \bc \adc y\Rightarrow  x \leq \dr\gamma \pra y ].$\\
%\end{tabular}
%\end{center}

\begin{center}
\begin{tabular}{c l}
& $\forall \alpha\forall \beta[\alpha \br \beta \leq \alpha \bc \dr \beta]$\\
iff & $\forall \alpha\forall \beta\forall x\forall \gamma\forall y[(x\leq \alpha \br \beta\ \&\ \gamma\leq \alpha\ \&\ \dr \beta\leq y)\Rightarrow x \leq \gamma \bc y]$\\
iff & $\forall \alpha\forall \beta\forall x\forall \gamma\forall y[(x\leq \alpha \br \beta\ \&\ \gamma\leq \alpha\ \&\  \beta\leq \adr y)\Rightarrow x \leq \gamma \bc y]$\\
iff & $\forall x\forall \gamma\forall y[x\leq \gamma \br \adr y\Rightarrow x \leq \gamma \bc y].$\\
\end{tabular}
\end{center}
\subsection{Completeness}
\label{ssec:completeness}
In the present subsection, we show that the axioms of the Hilbert-style calculus H.LRC  introduced in Section \ref{ssec:HLRC} are derivable sequents of D.LRC, and  that the rules of H.LRC   are derivable rules of D.LRC. Since H.LRC  is complete w.r.t.\ the semantics of perfect heterogeneous LRC-algebras (cf.\ Definition \ref{def:LRC-algebraic structure}), we obtain as a corollary that D.LRC is also complete w.r.t.\ the semantics of perfect heterogeneous LRC-algebras. The derivations of the axioms R1-R3 of H.LRC are standard and we omit them.

\begin{itemize}
\item[R4.] $\alpha \cdot (\beta \ror \gamma) \leftrightarrow (\alpha \cdot \beta) \ror (\alpha \cdot \gamma)$
\end{itemize}

\begin{center}
\begin{tabular}{cc}
\AX$\alpha \fCenter \alpha$
\AX$\beta \fCenter \beta$
\BI$\alpha \odot \beta \fCenter \alpha \cdot \beta$
\UI$\beta \fCenter \alpha \RCDOT \alpha \cdot \beta$

\AX$\alpha \fCenter \alpha$
\AX$\gamma \fCenter \gamma$
\BI$\alpha \odot \gamma \fCenter \alpha \cdot \gamma$
\UI$\gamma \fCenter \alpha \RCDOT \alpha \cdot \gamma$

\BI$\beta \ror \gamma \fCenter (\alpha \RCDOT \alpha \cdot \beta) \RAND (\alpha \RCDOT \alpha \cdot \gamma)$
\RightLabel{\fns dis}
\UI$\beta \ror \gamma \fCenter \alpha \RCDOT (\alpha \cdot \beta \RAND \alpha \cdot \gamma)$
\UI$\alpha \odot (\beta \ror \gamma) \fCenter \alpha \cdot \beta \RAND \alpha \cdot \gamma$
\UI$\alpha \cdot (\beta \ror \gamma) \fCenter \alpha \cdot \beta \RAND \alpha \cdot \gamma$
\UI$\alpha \cdot (\beta \ror \gamma) \fCenter (\alpha \cdot \beta) \ror (\alpha \cdot \gamma)$
\DisplayProof

 &

\AX$\alpha \fCenter \alpha$
\AX$\beta \fCenter \beta$
\UI$\beta \fCenter \beta \RAND \gamma$
\UI$\beta \fCenter \beta \ror \gamma$
\BI$\alpha \odot \beta \fCenter \alpha \cdot (\beta \ror \gamma)$
\UI$\alpha \cdot \beta \fCenter \alpha \cdot (\beta \ror \gamma)$

\AX$\alpha \fCenter \alpha$
\AX$\gamma \fCenter \gamma$
\UI$\gamma \fCenter \beta \RAND \gamma$
\UI$\gamma \fCenter \beta \ror \gamma$
\BI$\alpha \odot \gamma \fCenter \alpha \cdot (\beta \ror \gamma)$
\UI$\alpha \cdot \gamma \fCenter \alpha \cdot (\beta \ror \gamma)$

\BI$(\alpha \cdot \beta) \ror (\alpha \cdot \gamma) \fCenter \alpha \cdot (\beta \ror \gamma) \RAND \alpha \cdot (\beta \ror \gamma)$
\UI$(\alpha \cdot \beta) \ror (\alpha \cdot \gamma) \fCenter \alpha \cdot (\beta \ror \gamma)$
\DisplayProof
 \\
\end{tabular}
\end{center}

The proof of $(\beta \ror \gamma) \cdot \alpha \leftrightarrow (\beta \cdot \alpha) \ror (\gamma \cdot \alpha)$ is analogous and we omit it.

\begin{itemize}
\item[D1.] $\dc (A \lor B) \plra \dc A \lor \dc B$
\end{itemize}

\begin{center}
\begin{tabular}{cc}
\AX$A \fCenter A$
\UI$\DC A \fCenter \dc A$
\UI$A \fCenter \ADC \dc A$
\AX$B \fCenter B$
\UI$\DC B \fCenter \dc B$
\UI$B \fCenter \ADC \dc B$
\BI$A \por B \fCenter \ADC \dc A \,; \ADC \dc B$
\RightLabel{\fns D1}
\UI$A \por B \fCenter \ADC (\dc A \,; \dc B)$
\UI$\DC A \por B \fCenter \dc A \,; \dc B$
\UI$\dc (A \por B) \fCenter \dc A \,; \dc B$
\UI$\dc (A \por B) \fCenter \dc A \por \dc B$
\DisplayProof

 &

\AX$A \fCenter A$
\UI$A \fCenter A \,; B$
\UI$A \fCenter A \por B$
\UI$\DC A \fCenter \dc (A \por B)$
\UI$\dc A \fCenter \dc (A \por B)$
\AX$B \fCenter B$
\UI$B \fCenter A \,; B$
\UI$B \fCenter A \por B$
\UI$\DC B \fCenter \dc (A \por B)$
\UI$\dc B \fCenter \dc (A \por B)$
\BI$\dc A \por \dc B \fCenter \dc (A \por B) \,; \dc (A \por B)$
\UI$\dc A \por \dc B \fCenter \dc (A \por B)$
\DisplayProof
 \\
\end{tabular}
\end{center}

\begin{itemize}
\item[D3.] $\dr (\alpha \ror \beta) \plra \dr \alpha \lor \dr \beta$
\end{itemize}

\begin{center}
\begin{tabular}{cc}
\AX$\alpha \fCenter \alpha$
\UI$\DR \alpha \fCenter \dr \alpha$
\UI$\alpha \fCenter \ADR \dr \alpha$
\AX$\beta \fCenter \beta$
\UI$\DR \beta \fCenter \dr \beta$
\UI$\beta \fCenter \ADR \dr \beta$
\BI$\alpha \ror \beta \fCenter \ADR \dr \alpha \RAND \ADR \dr \beta$
\RightLabel{\fns D3}
\UI$\alpha \ror \beta \fCenter \ADR (\dr \alpha \,; \dr \beta)$
\UI$\DR \alpha \ror \beta \fCenter \dr \alpha \,; \dr \beta$
\UI$\dr (\alpha \ror \beta) \fCenter \dr \alpha \,; \dr \beta$
\UI$\dr (\alpha \ror \beta) \fCenter \dr \alpha \por \dr \beta$
\DisplayProof

 &

\AX$\alpha \fCenter \alpha$
\UI$\alpha \fCenter \alpha \RAND \beta$
\UI$\alpha \fCenter \alpha \ror \beta$
\UI$\DR \alpha \fCenter \dr (\alpha \ror \beta)$
\UI$\dr \alpha \fCenter \dr (\alpha \ror \beta)$
\AX$\beta \fCenter \beta$
\UI$\beta \fCenter \alpha \RAND \beta$
\UI$\beta \fCenter \alpha \ror \beta$
\UI$\DR \beta \fCenter \dr (\alpha \ror \beta)$
\UI$\dr \beta \fCenter \dr (\alpha \ror \beta)$
\BI$\dr \alpha \ror \dr \beta \fCenter \dr (\alpha \ror \beta) \RAND \dr (\alpha \ror \beta)$
\UI$\dr \alpha \ror \dr \beta \fCenter \dr (\alpha \ror \beta)$
\DisplayProof
 \\
\end{tabular}
\end{center}

\begin{itemize}
\item[D2.] $\dc \bot \plra \bot$
\end{itemize}

\begin{center}
\begin{tabular}{cc}
\AX$\bot \fCenter \I$
\RightLabel{\fns D2}
\UI$\bot \fCenter \ADC \I$
\UI$\DC \bot \fCenter \I$
\UI$\DC \bot \fCenter \bot$
\UI$\dc \bot \fCenter \bot$
\DisplayProof
 &
\AX$\bot \fCenter \I$
\UI$\bot \fCenter \dc \bot \,; \I$
\UI$\bot \fCenter \dc \bot$
\DisplayProof
 \\
\end{tabular}
\end{center}

\begin{itemize}
\item[D4.] $\dr 0 \plra \bot$
\end{itemize}

\begin{center}
\begin{tabular}{cc}
\AX$0 \fCenter \Phi$
\RightLabel{\fns D4}
\UI$0 \fCenter \ADR \I$
\UI$\DR 0 \fCenter \I$
\UI$\DR 0 \fCenter \bot$
\UI$\dr 0 \fCenter \bot$
\DisplayProof
 &
\AX$0 \fCenter \Phi$
\UI$0 \fCenter \dr 0 \RAND \Phi$
\UI$0 \fCenter \dr 0$
\DisplayProof
 \\
\end{tabular}
\end{center}

\begin{itemize}
\item[B1.] $\alpha\ror\beta \bc A \plra (\alpha \bc A) \land (\beta \bc A)$
\end{itemize}

\begin{center}
{\small
\begin{tabular}{c@{}c}
\AX$\alpha \fCenter \alpha$
\UI$\alpha \fCenter \alpha \RAND \beta$
\UI$\alpha \fCenter \alpha \ror \beta$
\AX$A \fCenter A$
\BI$\alpha \ror \beta \bc A \fCenter \alpha \BC A$
\UI$\alpha \ror \beta \bc A \fCenter \alpha \bc A$
\AX$\beta \fCenter \beta$
\UI$\beta \fCenter \alpha \RAND \beta$
\UI$\beta \fCenter \alpha \ror \beta$
\AX$A \fCenter A$
\BI$\alpha \ror \beta \bc A \fCenter \beta \BC A$
\UI$\alpha \ror \beta \bc A \fCenter \beta \bc A$
\BI$\alpha \ror \beta \bc A \,; \alpha \ror \beta \bc A \fCenter (\alpha \bc A) \wedge (\beta \bc A)$
\UI$\alpha \ror \beta \bc A \fCenter (\alpha \bc A) \wedge (\beta \bc A)$
\DisplayProof

 &

\AX$\alpha \fCenter \alpha$
\AX$A \fCenter A$
\BI$\alpha \bc A \fCenter \alpha \BC A$
\UI$\alpha \fCenter \alpha \bc A \ABC A$
\AX$\beta \fCenter \beta$
\AX$A \fCenter A$
\BI$\beta \bc A \fCenter \beta \BC A$
\UI$\beta \fCenter \beta \bc A \ABC A$
\BI$\alpha \ror \beta \fCenter  (\alpha \bc A \ABC A) \RAND  ( \beta \bc A \ABC A)$
\RightLabel{\fns B1}
\UI$\alpha \ror \beta \fCenter (\alpha \bc A \,; \beta \bc A) \ABC A$
\UI$\alpha \bc A \,; \beta \bc A \fCenter \alpha \ror \beta \BC A$
\UI$\alpha \bc A \,; \beta \bc A \fCenter \alpha \ror \beta \bc A$
\UI$(\alpha \bc A) \wedge (\beta \bc A) \fCenter \alpha \ror \beta \bc A$
\DisplayProof
 \\
\end{tabular}
}
\end{center}

\begin{itemize}
\item[B4.] $\alpha \ror \beta \br \gamma \plra (\alpha \br \gamma) \land (\beta \br \gamma)$
\end{itemize}

\begin{center}
{\small
\begin{tabular}{c@{}c}
\AX$\alpha \fCenter \alpha$
\UI$\alpha \fCenter \alpha \RAND \beta$
\UI$\alpha \fCenter \alpha \ror \beta$
\AX$\gamma \fCenter \gamma$
\BI$\alpha \ror \beta \br \gamma \fCenter \alpha \BR \gamma$
\UI$\alpha \ror \beta \br \gamma \fCenter \alpha \br \gamma$
\AX$\beta \fCenter \beta$
\UI$\beta \fCenter \alpha \RAND \beta$
\UI$\beta \fCenter \alpha \ror \beta$
\AX$\gamma \fCenter \gamma$
\BI$\alpha \ror \beta \br \gamma \fCenter \beta \BR \gamma$
\UI$\alpha \ror \beta \br \gamma \fCenter \beta \br \gamma$
\BI$\alpha \ror \beta \br \gamma \,; \alpha \ror \beta \br \gamma \fCenter (\alpha \br \gamma) \land (\beta \br \gamma)$
\UI$\alpha \ror \beta \br \gamma \fCenter (\alpha \br \gamma) \land (\beta \br \gamma)$
\DisplayProof

 &

\AX$\alpha \fCenter \alpha$
\AX$\gamma \fCenter \gamma$
\BI$\alpha \br \gamma \fCenter \alpha \BR \gamma$
\UI$\alpha \fCenter \alpha \br \gamma \ABR \gamma$
\AX$\beta \fCenter \beta$
\AX$\gamma \fCenter \gamma$
\BI$\beta \br \gamma \fCenter \beta \BR \gamma$
\UI$\beta \fCenter \beta \br \gamma \ABR \gamma$
\BI$\alpha \ror \beta \fCenter (\alpha \br \gamma \ABR \gamma) \RAND (\beta \br \gamma \ABR \gamma)$
\RightLabel{\fns B4}
\UI$\alpha \ror \beta \fCenter (\alpha \br \gamma \,; \beta \br \gamma) \ABR \gamma$
\UI$\alpha \br \gamma \,; \beta \br \gamma \fCenter \alpha \ror \beta \BR \gamma$
\UI$\alpha \br \gamma \,; \beta \br \gamma \fCenter \alpha \ror \beta \br \gamma$
\UI$(\alpha \br \gamma) \land (\beta \br \gamma) \fCenter \alpha \ror \beta \br \gamma$
\DisplayProof
 \\
\end{tabular}
}
\end{center}

\begin{itemize}
\item[B2.] $0 \bc A$
\end{itemize}

\begin{center}
\begin{tabular}{c}
\AX$0 \fCenter \Phi$
\UI$0 \fCenter \I \ABC A \RAND \Phi$
\UI$0 \fCenter \I \ABC A$
\UI$\I \fCenter 0 \BC A$
\UI$\I \fCenter 0 \bc A$
\DisplayProof
 \\
\end{tabular}
\end{center}

\begin{itemize}
\item[B5.] $0 \br \alpha$
\end{itemize}

\begin{center}
\begin{tabular}{c}
\AX$0 \fCenter \Phi$
\UI$0 \fCenter \I \ABR \alpha \RAND \Phi$
\UI$0 \fCenter \I \ABR \alpha$
\UI$\I \fCenter 0 \BR \alpha$
\UI$\I \fCenter 0 \br \alpha$
\DisplayProof

 \\
\end{tabular}
\end{center}

\begin{itemize}
\item[B3.] $\alpha \bc (\beta \bc A) \pra (\alpha \cdot \beta \bc A)$
\end{itemize}

\begin{center}
\begin{tabular}{c}
\AX$\alpha \fCenter \alpha$
\AX$\beta \fCenter \beta$
\AX$A \fCenter A$
\BI$\beta \bc A \fCenter \beta \BC A$
\BI$ \alpha \bc (\beta \bc A) \fCenter \alpha \BC (\beta \BC A)$
\RightLabel{\fns B3}
\UI$\alpha \bc (\beta \bc A) \fCenter (\alpha \odot \beta) \BC A$
\UI$\alpha \odot \beta \fCenter (\alpha \bc (\beta \bc A)) \ABC A$
\UI$\alpha \cdot \beta \fCenter (\alpha \bc (\beta \bc A)) \ABC A$
\UI$\alpha \bc (\beta \bc A) \fCenter (\alpha \cdot \beta) \BC A$
\UI$\alpha \bc (\beta \bc A) \fCenter (\alpha \cdot \beta) \bc A$
\DisplayProof
 \\
\end{tabular}
\end{center}

\begin{itemize}
\item[B6.] $\alpha \br (\beta \rand \gamma) \plra \alpha \br \beta \land \alpha \br \gamma$
\end{itemize}

\begin{center}
{\small
\begin{tabular}{cc}

\AX$\alpha \fCenter \alpha$
\AX$\beta \fCenter \beta$
\UI$\beta \RAND \gamma \fCenter \beta$
\UI$\beta \rand \gamma \fCenter \beta$
\BI$\alpha \br \beta \sqcap \gamma \fCenter \alpha \BR \beta$
\UI$\alpha \br \beta \sqcap \gamma \fCenter \alpha \br \beta$
\AX$\alpha \fCenter \alpha$
\AX$\gamma \fCenter \gamma$
\UI$\gamma \RAND \beta \fCenter \gamma$
\UI$\beta \RAND \gamma \fCenter \gamma$
\UI$\beta \rand \gamma \fCenter \gamma$
\BI$\alpha \br \beta \rand \gamma \fCenter \alpha \BR \gamma$
\UI$\alpha \br \beta \rand \gamma \fCenter \alpha \br \gamma$
\BI$\alpha \br \beta \rand \gamma \,; \alpha \br \beta \rand \gamma \fCenter (\alpha \br \beta) \pand (\alpha \br \gamma)$
\UI$\alpha \br \beta \rand \gamma \fCenter (\alpha \br \beta) \pand (\alpha \br \gamma)$
\DisplayProof

 &

\AX$\alpha \fCenter \alpha$
\AX$\beta \fCenter \beta$
\BI$\alpha \br \beta \fCenter \alpha \BR \beta$
\UI$\alpha \LABR \alpha \br \beta \fCenter \beta$

\AX$\alpha \fCenter \alpha$
\AX$\gamma \fCenter \gamma$
\BI$\alpha \br \gamma \fCenter \alpha \BR \gamma$
\UI$\alpha \LABR \alpha \br \gamma \fCenter \gamma$

\BI$(\alpha \LABR \alpha \br \beta) \RAND (\alpha \LABR \alpha \br \gamma) \fCenter \beta \rand \gamma$
\LeftLabel{\fns B6}
\UI$\alpha \LABR (\alpha \br \beta \,; \alpha \br \gamma) \fCenter \beta \rand \gamma$
\UI$\alpha \br \beta \,;  \alpha \br \gamma \fCenter \alpha \BR \beta \rand \gamma$
\UI$\alpha \br \beta \,; \alpha \br \gamma \fCenter \alpha \br \beta \rand \gamma$
\UI$(\alpha \br \beta) \pand (\alpha \br \gamma) \fCenter \alpha \br \beta \rand \gamma$
\DisplayProof
 \\
\end{tabular}
}
\end{center}

\begin{itemize}
\item[B7.] $\alpha \br 1$
\end{itemize}

\begin{center}
\begin{tabular}{c}
\AX$\Phi \fCenter 1$
\UI$\alpha \LABR \I \RAND \Phi \fCenter 1$
\UI$\alpha \LABR \I \fCenter 1$
\UI$\I \fCenter \alpha \BR 1$
\UI$\I \fCenter \alpha \br 1$
\DisplayProof
\end{tabular}
\end{center}

\begin{comment}
\begin{itemize}
\item[B8.] $\alpha \br \alpha$
\end{itemize}

\begin{center}
\begin{tabular}{c}
\AX$\alpha \fCenter \alpha$
\LeftLabel{\fns B8}
\UI$\alpha \LABR \I \fCenter \alpha$
\UI$\I \fCenter \alpha \BR \alpha$
\UI$\I \fCenter \alpha \br \alpha$
\DisplayProof
\end{tabular}
\end{center}
\end{comment}

%\begin{itemize}
%\item[BD1.] $\alpha \bc (A \lor B) \pra (\alpha \bc A) \lor (\dr \alpha \land \dc B)$
%\end{itemize}

%\begin{center}
%\begin{tabular}{c}
%\AX$\alpha \fCenter \alpha$
%\AX$A \fCenter A$

%\AX$\alpha \fCenter \alpha$
%\UI$\DR \alpha \fCenter \dr \alpha$

%\AX$B \fCenter B$
%\UI$\DC B \fCenter \dc B$

%\BI$\DR \alpha \RAND \DC B \fCenter \dr \alpha \land \dc B$
%\UI$\DC B \fCenter \DR \alpha > \dr \alpha \land \dc B$
%\UI$B \fCenter \ADC (\DR \alpha > \dr \alpha \land \dc B)$

%\BI$A \lor B \fCenter A \,; \ADR (\DR \alpha > \dr \alpha \land \dc B)$

%\BI$\alpha \bc (A \lor B) \fCenter \alpha \BC (A \,; \ADR (\DR \alpha > \dr \alpha \land \dc B))$
%\RightLabel{\fns BD1}
%\UI$\alpha \bc (A \lor B) \fCenter \alpha \BC A \,; (\dr \alpha \land \dc B)$
%\UI$\alpha \bc (A \lor B) \fCenter \alpha \bc A \,; (\dr \alpha \land \dc B)$
%\UI$\alpha \bc (A \lor B) \fCenter (\alpha \bc A) \lor (\dr \alpha \land \dc B)$
%\DisplayProof
%\end{tabular}
%\end{center}
\bigskip

\begin{itemize}
\item[BD1.] $\dr \alpha \land \alpha \bc A \pra \dc A$
\end{itemize}

\begin{center}
\begin{tabular}{c}
\AX$\alpha \fCenter \alpha$
\AX$A \fCenter A$
\UI$\DC A \fCenter \dc A$
\UI$A \fCenter \ADC \dc A$
\BI$\alpha \bc A \fCenter \alpha \BC \ADC \dc A$
\RightLabel{\fns BD1}
\UI$\alpha \bc A \fCenter \DR \alpha > \dc A$
\UI$\DR \alpha \,; \alpha \bc A \fCenter \dc A$
\UI$\DR \alpha \fCenter \dc A < \alpha \bc A$
\UI$\dr \alpha \fCenter \dc A < \alpha \bc A$
\UI$\dr \alpha \,; \alpha \bc A \fCenter \dc A$
\UI$\dr \alpha \land \alpha \bc A \fCenter \dc A$
\DisplayProof
\end{tabular}
\end{center}

\begin{itemize}
\item[BD2.] $\alpha \br \beta \pra \alpha \bc \dr \beta$
\end{itemize}

\begin{center}
\begin{tabular}{c}
\AX$\alpha \fCenter \alpha$
\AX$\beta \fCenter \beta$
\UI$\DR \beta \fCenter \dr \beta$
\UI$\beta \fCenter \ADR \dr \beta$
\BI$\alpha \br \beta \fCenter \alpha \BR \, \ADR \dr\beta$
\RightLabel{\fns BD2}
\UI$\alpha \br \beta \fCenter \alpha \BC \dr\beta$
\UI$\alpha \br \beta \fCenter \alpha \bc \dr\beta$
\DisplayProof
\end{tabular}
\end{center}

The rules of H.LRC immediately follow from applications of the introduction rules of the corresponding logical connectives in the usual way and we omit their derivations.

%%%

\subsection{Cut-elimination and subformula property}
\label{ssec:cut elim subformula}

In the present subsection, we sketch the verification that the D.LRC is a proper multi-type calculus (cf.\ Section \ref{sec:properly semi displayable}). By Theorem \ref{Thm cut elimination}, this is enough to establish that the calculus enjoys  cut elimination and subformula property.
With the exception of C$_8'$, all conditions  are straightforwardly verified by inspecting the rules, and this verification is left to the reader. %Only the condition C$'^*_5$ was slightly modified in order to allow rules whose principal formulas is not in display. In D.LRC the only such non-standard rule is D4 and it satisfies the condition, because the cut rules of D.LRC do not admit resource-terms in precedent position as cut-terms.

As to the verification of condition C$_8'$, the only interesting case is the one in which the cut formula is of the form $\alpha \bc A$, since the connective $\bc$ is monotone rather than normal in its second coordinate, which is the reason why not even a weak form of display property holds for D.LRC. This case is treated  below. Notice that, since all principal formulas are in display, no surgical cuts need to be eliminated in the principal stage.

\begin{center}
\begin{tabular}{ccc}
\bottomAlignProof
\AXC{\ \ \ $\vdots$ \raisebox{1mm}{$\pi_1$}}
\noLine
\UI$X \fCenter \alpha \BC A$
\UI$X \fCenter \alpha \bc A$
\AXC{\ \ \ $\vdots$ \raisebox{1mm}{$\pi_2$}}
\noLine
\UI$\Gamma \fCenter \alpha$
\AXC{\ \ \ $\vdots$ \raisebox{1mm}{$\pi_3$}}
\noLine
\UI$A \fCenter Y$
\BI$\alpha \bc A \fCenter \Gamma \BC Y$
\BI$X \fCenter \Gamma \BC Y$
\DisplayProof

 &
%\rotatebox[origin=c]{-90}{$\rightsquigarrow$}
$\rightsquigarrow$
 &

\bottomAlignProof
\AXC{\ \ \ $\vdots$ \raisebox{1mm}{$\pi_2$}}
\noLine
\UI$\Gamma \fCenter \alpha$

\AXC{\ \ \ $\vdots$ \raisebox{1mm}{$\pi_1$}}
\noLine
\UI$X \fCenter \alpha \BC A$
\UI$\alpha \fCenter X \BC A$

\BI$\Gamma \fCenter X \BC A$
\UI$X \fCenter \Gamma \BC A$

\AXC{\ \ \ $\vdots$ \raisebox{1mm}{$\pi_3$}}
\noLine
\UI$A \fCenter Y$

\BI$X \fCenter \Gamma \BC Y$

\DisplayProof
 \\
\end{tabular}
\end{center}

{\commment{
Let us consider the case in which the cut-formula $\varphi = \bc (\alpha, A)$ was introduced not in display in succedent position.

\begin{center}
\begin{tabular}{@{}c@{}}
\bottomAlignProof
\AXC{\ \ \ $\vdots$ \raisebox{1mm}{$\pi_1$}}
\noLine
\UI$X \fCenter \BC^{n\ror1} (\BC (\Gamma_1, \ldots, \BC(\Gamma_n, \BC(\alpha, A))))$
\UI$X \fCenter \BC^{n} (\BC(\Gamma_1, \ldots, \BC(\Gamma_n, \bc (\alpha, A))))$
\AXC{\ \ \ $\vdots$ \raisebox{1mm}{$\pi_2$}}
\noLine
\UI$\Gamma \fCenter \alpha$
\AXC{\ \ \ $\vdots$ \raisebox{1mm}{$\pi_3$}}
\noLine
\UI$A \fCenter Y$
\BI$\bc (\alpha, A) \fCenter \BC (\Gamma, Y)$
\BI$X \fCenter \BC^{n\ror1} (\Gamma_1, \ldots, \Gamma_n, \BC (\Gamma, Y))$
\DisplayProof
 \\
\rotatebox[origin=c]{-90}{$\rightsquigarrow$}
 \\
\bottomAlignProof
\AXC{\ \ \ $\vdots$ \raisebox{1mm}{$\pi_2$}}
\noLine
\UI$\Gamma \fCenter \alpha$

\AXC{\ \ \ $\vdots$ \raisebox{1mm}{$\pi_1$}}
\noLine
\UI$X \fCenter \BC^{n\ror1} (\BC (\Gamma_1, \ldots, \BC(\Gamma_n, \BC(\alpha, A))))$
\RightLabel{B7'}
\UI$X \fCenter \BC^{n} (\BC (\Gamma_1 \odot \Gamma_2, \ldots, \BC(\Gamma_n, \BC(\alpha, A))))$
\noLine
\UIC{\ \ \ \ \ \ \ \ \ \ \ \ \ \ \ \ \ \ \ \ \ \ {$\vdots$ \raisebox{1mm}{B7' n-1 times}}}
\noLine
\UI$X \fCenter \BC (((\Gamma_1 \odot \Gamma_2), \ldots, \Gamma_{n-1} \odot \Gamma_n) \odot \alpha, A)$

\UI$((\Gamma_1 \odot \Gamma_2), \ldots, \Gamma_{n-1} \odot \Gamma_n) \odot \alpha \fCenter \ABC (X, A)$

\UI$\alpha \fCenter ((\Gamma_1 \odot \Gamma_2), \ldots, \Gamma_{n-1} \odot \Gamma_n) \RCDOT \ABC (X, A)$

\BI$\Gamma \fCenter ((\Gamma_1 \odot \Gamma_2), \ldots, \Gamma_{n-1} \odot \Gamma_n) \RCDOT \ABC (X, A)$

\UI$((\Gamma_1 \odot \Gamma_2), \ldots, \Gamma_{n-1} \odot \Gamma_n) \odot \Gamma \fCenter \ABC (X, A)$

\UI$X \fCenter \BC (((\Gamma_1 \odot \Gamma_2), \ldots, \Gamma_{n-1} \odot \Gamma_n) \odot \Gamma, A)$

\RightLabel{B7}

\UI$X \fCenter \BC (((\Gamma_1 \odot \Gamma_2), \ldots, \Gamma_{n-1} \odot \Gamma_n), \BC \Gamma, A)$

\noLine
\UIC{\ \ \ \ \ \ \ \ \ \ \ \ \ \ \ \ \ \ \ \ \ {$\vdots$ \raisebox{1mm}{B7 n-1 times}}}
\noLine
\UI$X \fCenter \BC (((\Gamma_1 \odot \Gamma_2), \ldots, \Gamma_{n-1} \odot \Gamma_n) \odot \Gamma, A)$

\UI$X \fCenter \BC^{n+1} (\Gamma_1, \ldots, \Gamma_n, \BC (\Gamma, A))$

\AXC{\ \ \ $\vdots$ \raisebox{1mm}{$\pi_3$}}
\noLine
\UI$A \fCenter Y$
\BI$X \fCenter \BC^{n+1} (\Gamma_1, \ldots, \Gamma_n, \BC (\Gamma, Y))$
\DisplayProof
 \\
\end{tabular}
\end{center}

Notice that any formula $\varphi = \alpha + \beta$ which is introduced not in display in precedent position, as shown below, cannot be a cut formula, because the cut would not be sound.

\begin{center}
\begin{tabular}{@{}c@{}}
\bottomAlignProof
\AXC{\ \ \ $\vdots$ \raisebox{1mm}{$\pi_1$}}
\noLine
\UI$\alpha \DC X \fCenter Y$
\AXC{\ \ \ $\vdots$ \raisebox{1mm}{$\pi_2$}}
\noLine
\UI$\beta \DC X \fCenter Z$
\RightLabel{D4}
\BI$\alpha + \beta \DC X \fCenter Y \,; Z$
\DisplayProof
\end{tabular}
\end{center}

\paragraph{Parametric stage.} It is easy to check that the rules of D.LRC satisfy the conditions in [cite section].
}}

\subsection{Semantic conservativity}
\label{ssec:conservativity}
To argue that the calculus D.LRC adequately captures LRC, we follow the standard proof strategy discussed in \cite{GMPTZ}. Recall that $\vdash_{\mathrm{LRC}}$ denotes the syntactic consequence relation arising from the Hilbert system for LRC introduced in Section \ref{ssec:HLRC}. We need to show that, for all LRC-formulas $A$ and $B$, if $A\vdash B$ is a provable sequent in the calculus D.LRC, then  $A\vdash_{\mathrm{LRC}} B$. This fact can be verified using  the following standard argument and facts: (a) the rules of D.LRC are sound w.r.t.\  perfect heterogeneous LRC-algebras  (cf.\ Section \ref{ssec:soundness}), and (b) LRC is strongly complete w.r.t.\ perfect heterogeneous LRC-algebras (cf.\ Corollary \ref{cor:strong completeness perfect algebras}). Then, let $A, B$ be LRC-formulas such that $A\vdash B$ is a derivable sequent in D.LRC. By (a), this implies that $A\models_{\mathrm{LRC}} B$, which implies, by (b), that $A\vdash_{\mathrm{LRC}} B$, as required.

%%%
%\marginnote{add just a comment about the adjoints as usual.}

\section{Case studies}
\label{sec:casestudies}
In this section, we present a number of case studies, with the purpose of highlighting various aspects of the basic framework and also various ways in which it can be adapted to different settings. % discuss some variations and applications of the basic framework of D.LRC by means of  examples which illustrate different facets and potentials of the framework.
The most common adaptations performed in the case studies below consist in adding analytic structural rules   to the basic calculus. Interestingly, the resulting calculi still enjoy the same package of basic properties (soundness, completeness, cut-elimination, subformula property, conservativity) which hold of D.LRC as an immediate consequence of general results. Indeed, it can be readily verified that the axioms corresponding to each of the rules introduced below are analytic inductive (cf.~\cite[Definition 55]{GMPTZ}), and hence are canonical (cf.~\cite[Theorem 19]{GMPTZ}).
Therefore, the axiomatic extensions of LRC corresponding to these axioms is {\em sound } and {\em complete} w.r.t.~the corresponding subclass of  LRC-models. %Using the additional structural rule, the axiom in question can be derived in D.LRC with
{\em Conservativity} can be argued by repeating verbatim the same argument given in Section 4.4 which uses the soundness of the augmented calculus w.r.t.~the corresponding class of perfect  LRC-models, and the completeness of the Hilbert-style presentation of the axiomatic extension which holds because the additional axioms are canonical. Finally, {\em cut-elimination} and {\em subformula property} follow from the general cut-elimination metatheorem.

In what follows, we will sometimes abuse terminology and speak of a formula $A$ being derived from certain assumptions $A_1;\ldots; A_n$ meaning that the sequent $A_1;\ldots; A_n\vdash A$ is derivable in the calculus.
\subsection{Pooling capabilities (correcting a homework assignment)}
\label{ssec:homework}

Two teaching assistants, Carl (\texttt{c}) and Dan (\texttt{d}), are assigned the task of grading a set of homework assignments consisting of two exercises, a model-theoretic one ($M$) and a proof-theoretic one ($P$). Carl is  only capable of correcting exercise $P$, while Dan is only capable of correcting exercise $M$. None of the two teaching assistants can individually complete the task they have been assigned. However, they can if they {\em pool} their capabilities. One way in which they can complete the task is by implementing the following plan: they split the set of homework assignments  into two sets $\alpha$ and $\beta$. Initially, Carl grades the solutions to exercise $P$ in $\alpha$ and Dan those of $M$ in $\beta$. Then they switch sets and each of them grades the solutions to the same exercise in the other set.

To capture this case study in (a multi-agent version of) D.LRC, we introduce atomic propositions such as $P_\alpha$ (resp.\  $M_\beta$), the intended meaning of which is that all solutions to exercise $P$ (resp.\ $M$) in $\alpha$ (resp.\ $\beta$) have been graded. We also treat $\alpha$ and $\beta$ as resources. The following table contains formulas expressing  the assumptions about  agents' capabilities, the initial state of affairs (which  resources are initially in possession of which agent), and the plan of switching after completing the correction of one exercise in a given set:
\begin{center}
\begin{tabular}{c c|c|c}
           \mc{2}{c|}{Capabilities} & initial state  & planning \\
           %\mc{2}{c|}{assumptions} & assumptions& assumptions \\
\hline

  $\alpha\bc_{\ac} P_\alpha$  & $\beta \bc_{\ac} P_\beta$ & $\dr_{\ac}\alpha  $  & $M_\beta \pra \dr_{\ac}\beta$ \\
$\alpha \bc_{\ad} M_\alpha$ & $\beta \bc_{\ad} M_\beta$ & $\dr_{\ad}\beta  $     &  $P_\alpha\pra \dr_{\ad}\alpha$ \\
\end{tabular}
\end{center}
%Thus, given the following assumptions: $H1 = \alpha \dc_\ac \top \,; \alpha \bc_\ac P_\alpha$, $H2 = \beta \dc_\ad \top \,; \beta \bc_\ad M_\beta$, $H3 = \beta \bc_\ac P_\beta \,; ( \beta \dc_\ad M_\beta \,; \beta \dc_\ad M_\beta \pra M_\beta) \,; M_\beta \pra \beta \dc_\ac \top$, $H4 = \alpha \bc_\ad M_\alpha \,; ( \alpha \dc_\ac P_\alpha \,; \alpha \dc_\ac P_\alpha \pra P_\alpha) \,; P_\alpha \pra \alpha \dc_\ad \top$, we can prove that they are indeed able to correct everything in the end: $(\alpha \dc_\ac P_\alpha \pand \beta \dc_\ad M_\beta) \pand (\beta \dc_\ac P_\beta \pand \alpha \dc_\ad M_\alpha)$.
In the present setting we also assume that, whenever an agent is {\em able} to bring about a certain state of affairs, the agent will. Formally, this corresponds to the validity of the axioms $\dc_{\texttt{i}} A\pra A$ for every agent \texttt{i} and formula $A$. This axiom does not follow from the logic H.LRC, and in many  settings it would not be sound. However, for the sake of the present case study, we will assume that this axiom holds. In fact, this axiom  corresponds to the following  rules `Ex$_{\texttt{i}}$' (`Ex' stands for Execution), for each \texttt{i}$\in \{\texttt{c}, \texttt{d}\}$:
\[\AXC{$X\fCenter Y$}
\LeftLabel{Ex$_{\texttt{i}}$}
\UI$\DC_{\texttt{i}} X\fCenter Y$
\DisplayProof\]
Notice that these rules are analytic (cf.\  Section \ref{sec:properly semi displayable}). %do not satisfy condition C$_5$ in its restricted form, but  do satisfy C$^{'*}_5$.
Hence, by Theorem \ref{Thm cut elimination}, when adding these rules  to the basic calculus D.LRC,  the resulting calculus (which we refer to as D.LRC + Ex) enjoys cut elimination and subformula property.

We aim at  deriving the formula $(P_\alpha\pand M_\beta)\pand (P_\beta\pand  M_\alpha)$ from the assumptions above in the calculus D.LRC + Ex. This will provide the formal verification that executing the plan yields the completion of the task. Let us start by considering the following derivations:

\begin{center}
\begin{tabular}{c@{}c@{}c}

$\pi_1$ &$\quad$& $\pi_2$\\

&&\\

{\footnotesize
\AXC{\ \ \ \ \ \ \ \ \ \ \ \ \ $\vdots$ \raisebox{1mm}{\tiny{\begin{tabular}{@{}l}proof for\\BD1\\ \end{tabular} }}}
\noLine
\UI$\dr_{\ac} \alpha  \,; \alpha \bc_{\ac} P_{\alpha} \fCenter \dc_{\ac} P_{\alpha}$
\AXC{$P_{\alpha} \fCenter P_{\alpha}$}
\LeftLabel{Ex}
\UI$\DC_{\ac} P_{\alpha} \fCenter P_{\alpha}$
\UI$\dc_{\ac} P_{\alpha} \fCenter P_{\alpha}$
\LeftLabel{Cut}
\BI$\dr_{\ac} \alpha \,; \alpha \bc_{\ac} P_{\alpha} \fCenter P_{\alpha}$
\DisplayProof
}
&&
{\footnotesize
\AXC{\ \ \ \ \ \ \ \ \ \ \ \ \ $\vdots$ \raisebox{1mm}{\tiny{\begin{tabular}{@{}l}proof for\\BD1\\ \end{tabular} }}}
\noLine
\UI$\dr_{\ad} \beta  \,; \alpha \bc_{\ad} M_\beta \fCenter  \dc_{\ad} M_\beta$
\AXC{$M_{\beta} \fCenter M_{\beta}$}
\LeftLabel{Ex}
\UI$\DC_{\ad} M_{\beta} \fCenter M_{\beta}$
\UI$\dc_{\ad} M_{\beta} \fCenter M_{\beta}$
\LeftLabel{Cut}
\BI$\dr_{\ad} \beta  \,; \beta \bc_{\ad} M_{\beta} \fCenter M_{\beta}$
\DisplayProof
}
\\
&&\\
$\pi_3$ &$\quad$& $\pi_4$\\

&&\\
{\footnotesize
\AXC{\ \ \ \ \ \ \ \ \ \ \ \ \ $\vdots$ \raisebox{1mm}{\tiny{\begin{tabular}{@{}l}proof for\\BD1\\ \end{tabular} }}}
\noLine
\UI$\dr_{\ac} \beta  \,; \beta \bc_{\ac} P_{\beta} \fCenter  \dc_{\ac} P_{\beta}$
\AXC{$P_{\beta} \fCenter P_{\beta}$}
\LeftLabel{Ex}
\UI$\DC_{\ac} P_{\beta} \fCenter P_{\beta}$
\UI$\dc_{\ac} P_{\beta} \fCenter P_{\beta}$
\LeftLabel{Cut}
\BI$\dr_{\ac} \beta \,; \beta \bc_{\ac} P_{\beta} \fCenter P_{\beta}$
\DisplayProof
}
&&

{\footnotesize
\AXC{\ \ \ \ \ \ \ \ \ \ \ \ \ $\vdots$ \raisebox{1mm}{\tiny{\begin{tabular}{@{}l}proof for\\BD1\\ \end{tabular} }}}
\noLine
\UI$\dr_{\ad} \alpha  \,; \alpha \bc_{\ad} M_\alpha \fCenter \dc_{\ad} M_\alpha$
\AXC{$M_\alpha\fCenter M_\alpha$}
\LeftLabel{Ex}
\UI$\DC_{\ad} M_{\alpha} \fCenter M_{\alpha}$
\UI$\dc_{\ad} M_{\alpha} \fCenter M_{\alpha}$
\LeftLabel{Cut}
\BI$\dr_{\ad} \alpha  \,; \alpha \bc_{\ad} M_{\alpha} \fCenter M_{\alpha}$
\DisplayProof
}
\\
\end{tabular}
\end{center}

These derivations follow one and the same pattern, and each derives one piece of the desired conclusion. Hence, one would want to suitably prolong these derivations by applying $\pand_R$ to reach the conclusion. However, while the conclusions of $\pi_1$ and $\pi_2$ contain only formulas which are assumptions in our case study as reported in the table above, the formulas $\dr_{\ac} \beta$ and $\dr_{\ad} \alpha$, occurring in the conclusions of $\pi_3$ and $\pi_4$ respectively, are not assumptions. However,  they are  provable from the assumptions. Indeed, they encode states of affairs which hold after \texttt{c} and \texttt{d} have  switched  the sets $\alpha$ and $\beta$.

Notice that the following sequents are provable (their derivations are straightforward and are omitted):
\[M_{\beta}; M_{\beta} \rightarrow \dr_{\ac}\beta \vdash \dr_{\ac}\beta  \quad P_{\alpha}; P_{\alpha} \rightarrow \dr_{\ad}\alpha \vdash \dr_{\ad}\alpha  \]
These sequents say that the formulas $\dr_{\ac}\beta$ and $\dr_{\ad}\alpha$ are provable from the `planning assumptions' (cf.\ table above) using the formulas $M_{\beta}$ and $P_{\alpha}$ which have been derived purely from the assumptions by $\pi_{1}$ and $\pi_{2}$.
Hence, the atoms $P_{\beta}$ and $M_{\alpha}$ can be derived from the original assumptions via cut. Then, applying $\pand_{R}$ and possibly contraction, one can derive the desired sequent.

\subsection{Conjoining capabilities (the wisdom of the crow)}
\label{ssec:crow}

A BBC documentary program shows a problem-solving test conducted on a crow. In the present subsection we formalize an adapted version of this test. There is food ($\phi$) positioned deep in a narrow box, out of the reach of the crow's beak. There is a short stick ($\sigma$) directly available to the crow, two stones ($\rho_1, \rho_2$) each inside a cage, and a long stick ($\lambda$)  inside a transparent box which  releases the stick if enough weight (that of two stones or more) lays inside the box. The stick $\sigma$ is too short for the crow to reach the food using it. However, previous tests have shown that  the crow is capable of performing the following individual steps: (a) reaching the food using the long stick; (b) retrieving the stones from the cages using the short stick; (c) retrieving the long stick by dropping  stones into a slot in the box. The crow succeeded in executing these individual steps in the right order and got to the food.

An interesting feature of this case study is the interplay of different kinds of resources. Specifically, $\sigma$ is a {\em reusable} resource (indeed, the crow uses the same stick to reach the two stones), which fact can be expressed by the sequent $\sigma\vdash \sigma\cdot\sigma$. Also, the following formula holds of all resources relevant to the present case study: $\alpha\br\gamma\wedge \beta\br\delta \rightarrow \alpha\cdot \beta \br \gamma\cdot\delta$. This formula implies a form of {\em scalability} of resources,\footnote{That is, if the agent is capable of getting  one (measure of) $\beta$ from one (measure of) $\alpha$, then is also capable to get two or $n$ (measures of) $\beta$ from two or $n$ (measures of) $\alpha$.} which is not a property holding in general, and hence has not been added to the general calculus. The crow passing the test shows to be able to conjoin the separate capabilities together.  This is expressed by the following {\em transitivity}-type axiom: $\alpha\br\beta\land\beta\br \gamma\rightarrow \alpha\br\gamma$. The crow's achievement is remarkable precisely because this axiom cannot be expected to hold of any agent.
These conditions translate into the following analytic rules:

\begin{center}
\begin{tabular}{c c c c c }
\AX$\Sigma\odot \Sigma\fCenter \Omega$
\LeftLabel{Contr}
\UI$\Sigma \fCenter \Omega$
\DisplayProof
						&&
\AX$(\Gamma\LABR X)\odot(\Pi\LABR Y) \fCenter \Delta$
\LeftLabel{Scalab}
\UI$(\Gamma\odot \Pi)\LABR(X\, ; Y)  \fCenter \Delta$
\DisplayProof
 &&
 \AX$(\Gamma\LABR X)\LABR Y \fCenter \Delta$
\LeftLabel{Trans}
\UI$\Gamma\LABR(X\, ; Y)  \fCenter \Delta$
\DisplayProof

\\
\end{tabular}
\end{center}
In order for the  rule Contr to satisfy C$_6$ and C$_9$, we need to work with a version of D.LRC which admits {\em two} types of resources: the {\em reusable} ones (for which the contraction rule is sound) and the general ones for which contraction is not sound. Hence, the contraction would be introduced only for the reusable type. Once the new type has been introduced, the language  and calculus of LRC need to be expanded with copies of each original connective, so as to account for the fact that each copy takes in input and outputs exactly one type unambiguously. Correspondingly, copies of each original rule have to be added so that each copy accounts for exactly one reading of the original rule. This is a tedious but entirely safe procedure that guarantees that a proper multi-type calculus (cf.\ Definition \ref{def:proper multi-type calculus}) can be introduced which admits {\em all} the rules above. The reader is referred to \cite{PDL,Multitype} for  examples of such a disambiguation procedure.

%\marginnote{notice that in some cases, reusability can be modelled as an intrinsic property of resources (and hence expressed purely at the level of resources with identities of type $\alpha  = \alpha\cot \alpha$), but there are also cases in which the reusability is a property of the resource in connection with a specific agent. This is for instance the case of the stick, which is reusable w.r.t.\ the crow but not w.r.t.\ another agent who e.g.\ sets it on fire. Accordingly, we encode the reusability of $\sigma$ with a rule which also involves $\dc$.}
The following table shows the assumptions of the present case study:

\begin{center}
\begin{tabular}{c|r}
          Initial state         & Capabilities            \\
          %assumption            & assumptions              \\
\hline
                  & $\sigma \br \rho \quad$    \\
  $\dr\sigma $    & $\rho \cdot \rho \br \lambda \quad$\\
                  &  $\lambda \br \varphi \quad$ \\

\end{tabular}
\end{center}

We aim at proving the following sequent:
\[\sigma\br\rho\,;\rho\cdot\rho\br \lambda\,;\lambda\br \phi\,;\dr\sigma\vdash \dc\dr\phi.\]

We do it in several steps: first, in the following derivation $\pi_1$, we prove that for any reusable resource $\sigma$, if $\sigma\br\rho$ then $\sigma\br\rho\cdot\rho$:
\begin{center}
\AX$\sigma\fCenter\sigma$
\AX$\rho\fCenter\rho$
\BI$\sigma\br\rho\fCenter \sigma\BR\rho$
\UI$\sigma\LABR\sigma\br\rho\fCenter \rho$
\AX$\sigma\fCenter\sigma$
\AX$\rho\fCenter\rho$
\BI$\sigma\br\rho\fCenter \sigma\BR\rho$
\UI$\sigma\LABR\sigma\br\rho\fCenter \rho$
\BI$(\sigma\LABR\sigma\br\rho)\odot(\sigma\LABR\sigma\br\rho)\fCenter\rho\cdot\rho$
\LeftLabel{Scalab}
\UI$(\sigma\odot\sigma)\LABR(\sigma\br\rho\,;\sigma\br\rho)\fCenter\rho\cdot\rho$
\UI$\sigma\br\rho\,;\sigma\br\rho\fCenter \sigma\odot\sigma\BR\rho\cdot\rho$
\UI$\sigma\odot\sigma\fCenter (\sigma\br\rho\,;\sigma\br\rho) \ABR\rho\cdot\rho$
\LeftLabel{Contr}
\UI$\sigma\fCenter (\sigma\br\rho\,;\sigma\br\rho) \ABR\rho\cdot\rho$
\UI$\sigma\br\rho\,;\sigma\br\rho\fCenter  \sigma\BR\rho\cdot\rho$
\UI$\sigma\br\rho\fCenter  \sigma\BR\rho\cdot\rho$
\UI$\sigma\br\rho\fCenter  \sigma\br\rho\cdot\rho$
\DisplayProof
\end{center}
Second, in the following derivation $\pi_2$, we prove an instance of the transitivity axiom:
\begin{center}
\AX$\sigma \fCenter \sigma$
\AX$\rho \fCenter \rho$
\AX$\rho \fCenter \rho$
\BI$\rho \odot \rho \fCenter \rho \cdot \rho$
\UI$\rho \cdot \rho \fCenter \rho \cdot \rho$
\BI$\sigma\br\rho\cdot\rho\fCenter \sigma\BR\rho\cdot\rho$
\UI$\sigma\LABR\sigma\br\rho\cdot\rho\fCenter \rho\cdot\rho$
\AX$\lambda\fCenter\lambda$
\BI$\rho\cdot\rho\br \lambda\fCenter \sigma\LABR\sigma\br\rho\cdot\rho\BR \lambda$
\UI$(\sigma\LABR\sigma\br\rho\cdot\rho)\LABR\rho\cdot\rho\br \lambda\fCenter  \lambda$
\LeftLabel{Trans}
\UI$\sigma\LABR(\sigma\br\rho\cdot\rho\,;\rho\cdot\rho\br \lambda)\fCenter  \lambda$
\UI$\sigma\br\rho\cdot\rho\,;\rho\cdot\rho\br \lambda\fCenter \sigma\BR \lambda$
\UI$\sigma\br\rho\cdot\rho\,;\rho\cdot\rho\br \lambda\fCenter \sigma\br \lambda$
\DisplayProof
\end{center}

Similarly, a derivation $\pi_3$ can be given of the following instance of the transitivity axiom:

\[\sigma\br\lambda\,; \lambda \br \phi\vdash \sigma\br \phi.\]

Finally, the following derivation $\pi_4$ is the missing piece:

\begin{center}
\AXC{\ \ \ \ \ \ \ \ \ \ \ \ \ \ \ $\vdots$ \raisebox{1mm}{\fns{\begin{tabular}{@{}l}proof for\\BD2\\ \end{tabular} }}}
%\AXC{\ \ \ \ \ $\vdots$ \fns BD3}
\noLine
\UI$\sigma\br\phi\fCenter \sigma\bc \dr\phi$
\AXC{\ \ \ \ \ \ \ \ \ \ \ \ \ \ \ $\vdots$ \raisebox{1mm}{\fns{\begin{tabular}{@{}l}proof for\\BD1\\ \end{tabular} }}}
%\AXC{\ \ \ \ \ $\vdots$ \fns BD2}
\noLine
\UI$\sigma\bc\dr\phi\, ; \dr \sigma\fCenter \dc \dr\phi$
\UI$\sigma\bc\dr\phi\fCenter \dc \dr\phi< \dr \sigma$
\LeftLabel{Cut}
\BI$\sigma\br\phi\fCenter\dc \dr\phi< \dr \sigma$
\UI$\sigma\br\phi\,; \dr \sigma\fCenter\dc \dr\phi$
\DisplayProof
\end{center}

The requested sequent can be then derived using $\pi_1$-$\pi_4$ via cuts and display postulates.

\commment{
$$\sigma \dc \top \,;  (((\sigma \cdot \rho_1) \cdot (\sigma \cdot \rho_2)) \cdot \lambda) \bc (\varphi \dc \top) \fCenter  \sigma \dc (\sigma \cdot (\rho_1 \cdot \rho_2) \bc (\lambda \bc \varphi \dc \top))$$

\begin{tikzpicture}
[place/.style={circle, draw, inner sep=0pt, minimum size=6mm},
transition/.style={rectangle, thick, fill=black, minimum height=7mm, inner sep=2pt}]

[bend angle=45,
->/.style={<-,shorten <=1pt,>=stealth?,semithick}, post/.style={->,shorten >=1pt,>=stealth?,semithick}]

\node at (0, 0)      [place, label=below: $p_0$]  (P0)  {};
\node at (0, -3)     [place, label=below: $p_1$]  (P1)  {};
\node at (4, -1.5)  [place, label=below: $p_2$]  (P2)  {};
\node at (4, -4.5)  [place, label=below: $p_3$]  (P3)  {};
\node at (8, -3)     [place, label=below: $p_4$]  (P4)  {};

\node at (2, -1.5)  [transition, label=below: $t_1$] (t1)  {};
	\draw [->, semithick] (P0) to (t1);
	\draw [->, semithick] (P1) to (t1);
	\draw [->, semithick] (t1) to (P2);

\node at (6, -3)  [transition, label=below: $t_2$] (t2)  {};
	\draw [->, semithick] (P2) to (t2);
	\draw [->, semithick] (P3) to (t2);
	\draw [->, semithick] (t2) to (P4);

\end{tikzpicture}

\begin{center}
{\tiny
\AXC{$\sigma \fCenter \sigma$}
\AXC{$\rho_1 \fCenter \rho_1$}
\BIC{$\sigma \odot \rho_1 \fCenter \sigma \cdot \rho_1$}
\AXC{$\sigma \fCenter \sigma$}
\AXC{$\rho_2 \fCenter \rho_2$}
\BIC{$\sigma \odot \rho_2 \fCenter \sigma \cdot \rho_2$}
\BIC{$(\sigma \odot \rho_1) \odot (\sigma \odot \rho_2) \fCenter (\sigma \cdot \rho_1) \cdot (\sigma \cdot \rho_2)$}
\UIC{$(\sigma \odot \sigma) \odot (\rho_1 \odot \rho_2) \fCenter (\sigma \cdot \rho_1) \cdot (\sigma \cdot \rho_2)$}
\AXC{$\lambda \fCenter \lambda$}
\BIC{$((\sigma \odot \sigma) \odot (\rho_1 \odot \rho_2)) \odot \lambda \fCenter ((\sigma \cdot \rho_1) \cdot (\sigma \cdot \rho_2)) \cdot \lambda$}

\AXC{$\varphi \fCenter \varphi$}
\AXC{$\top \fCenter \top$}
\BIC{$\varphi \DC \top \fCenter \varphi \dc \top$}
\UIC{$\varphi \dc \top \fCenter \varphi \dc \top$}

\BIC{$((\sigma \cdot \rho_1) \cdot (\sigma \cdot \rho_2)) \cdot \lambda \bc (\varphi \dc \top) \fCenter ((\sigma \odot \sigma) \odot (\rho_1 \odot \rho_2)) \odot \lambda \BC \varphi \dc \top$}
\UIC{$H1 \fCenter ((\sigma \odot \sigma) \odot (\rho_1 \odot \rho_2)) \odot \lambda \BC \varphi \dc \top$}
\UIC{$H1 \fCenter ((\sigma \odot \sigma) \odot (\rho_1 \odot \rho_2)) \BC (\lambda \BC \varphi \dc \top)$}
\UIC{$(\sigma \odot \sigma) \odot (\rho_1 \odot \rho_2) \fCenter H1 \ABC (\lambda \BC \varphi \dc \top)$}
\UIC{$\sigma \odot (\sigma \odot (\rho_1 \odot \rho_2)) \fCenter H1 \ABC (\lambda \BC \varphi \dc \top)$}
\UIC{$H1 \fCenter (\sigma \odot (\sigma \odot (\rho_1 \odot \rho_2))) \BC (\lambda \BC \varphi \dc \top)$}
\UIC{$H1 \fCenter \sigma \BC [\sigma \odot (\rho_1 \odot \rho_2) \BC (\lambda \BC \varphi \dc \top)]$}
\UIC{$H1 \fCenter \sigma \bc [\sigma \odot (\rho_1 \odot \rho_2) \bc (\lambda \bc \varphi \dc \top)]$}

\AXC{\ \ \ \ \ \ \ \ \ \ \ \ \ \ \ \ \ \ \ \ \ \ \ \ \ \ \ \ \ \ \ \,$\vdots$ \raisebox{1mm}{\tiny{\begin{tabular}{@{}l}proof for\\BD9\\ \end{tabular} }}}
\noLine
\UIC{$\sigma \dc \top \,; \sigma \bc [(\sigma \cdot (\rho_1 \cdot \rho_2)) \bc (\lambda \bc \varphi \dc \top)] \fCenter \sigma \dc [(\sigma \cdot (\rho_1 \cdot \rho_2)) \bc (\lambda \bc \varphi \dc \top)]$}
\UIC{$\sigma \bc [(\sigma \cdot (\rho_1 \cdot \rho_2)) \bc (\lambda \bc \varphi \dc \top)] \fCenter \sigma \dc \top > \dc [(\sigma \cdot (\rho_1 \cdot \rho_2)) \bc (\lambda \bc \varphi \dc \top)]$}

\LeftLabel{\tiny{Cut}}

\BIC{$H1 \fCenter \sigma \dc \top > \dc [(\sigma \cdot (\rho_1 \cdot \rho_2)) \bc (\lambda \bc \varphi \dc \top)]$}
\UIC{$\sigma \dc \top \,; H1 \fCenter \dc [(\sigma \cdot (\rho_1 \cdot \rho_2)) \bc (\lambda \bc \varphi \dc \top)]$}
\DisplayProof
}
\end{center}

%%%

}

\subsection{Resources having different roles  (The Gift of the Magi)}
\label{ssec:gifts}
{\em The Gift of the Magi} is a short story, written by O.\ Henry and first appeared in 1905, about a young married  couple of very modest means,  Jim (\texttt{j}) and Della (\texttt{d}), who have only two possessions between them which are of value (both monetarily and in the sense that  they take pride in them): Della's unusually long hair ($\eta$),  and Jim's family gold watch ($\omega$).
On Christmas Eve, Della sells her hair to buy a chain ($\gamma$) for Jim's watch, and Jim sells his watch to buy an ivory brush ($\beta$) for Della.

\smallskip
Jim and Della are materially worse off at the end of  the story than at the beginning, since, while the resources $\omega$ and $\eta$ could be used/enjoyed on their own, $\gamma$ and $\beta$ can only be used when coupled with $\omega$ and $\eta$ respectively. In fact, the very choice of $\gamma$ and $\beta$ as presents is a direct consequence of the fact that---besides being used by their respective owners as a means to get the money to buy a present for the other---the resources $\omega$ and $\eta$ are used by the {\em partner} of their respective owners  as  {\em beacons} guiding them in their choice of a present. For instance, their final situation would not have been as bad if Della had bought Jim a new overcoat or  a pair of gloves, or if Jim had bought Della  replacements for her old brown jacket or  hat, the need for which is indicated in the short story. However, each wants to make their present as meaningful as possible to the other one, and hence each {\em targets} his/her present at the one possession the other takes pride in.

%\marginnote{Shall we keep the observation about uniqueness as a footnote? or remove it altogether? It seems to me that, although relevant to the story, we do not really use uniqueness explicitly in the formalization. Or do we do and I am not seeing it?}
Finally, the  uniqueness of the meaningful resource of each agent is the reason why ``the whole affair has something of the dark inevitability of Greek tragedy'' (cit.\ P.~G.~Wodehouse, {\em Thank you, Jeeves}): indeed, $\omega$ (resp.\ $\eta$) is  both the only target for a meaningful present for Jim (resp.\ Della), and also the only means he (resp.\ she) has to acquire such a present for her (resp.\ him). %Contrast this with e.g.\  the situation in which Jim and Della have  {\em two} meaningful resources each, $\omega_1$ and $\omega_2$ for Jim, and  $\eta_1$ and $\eta_2$ for Della, each of which can be used  both as a beacon and as a means to get the money to buy a present. Assume that $\gamma_1$ (resp.\ $\gamma_2$) fits only with $\omega_1$  (resp.\ $\omega_2$) but not with $\omega_2$ (resp.\ $\omega_1$), and $\beta_1$ (resp.\ $\beta_2$) fits only with $\eta_1$  (resp.\ $\eta_2$) but not with $\eta_2$ (resp.\ $\eta_1$). Then, of the 16 possible  outcomes (e.g.\, (a) Jim sells $\omega_1$ to buy $\beta_1$ and Della sells $\eta_2$ to buy $\gamma_2$; (a') Jim sells $\omega_1$ to buy $\beta_2$ and Della sells $\eta_1$ to buy $\gamma_2$; (b) Jim sells $\omega_1$ to buy $\beta_1$ and Della sells $\eta_1$ to buy $\gamma_2$; (b') Jim sells $\omega_1$ to buy $\beta_2$ and Della sells $\eta_2$ to buy $\gamma_2$; etc.), only 4 are as bad as the one in the short story.

%The end of this story is bittersweet rather than bitter, since,having sacrificed their own most valuable possessions in exchange for gifts that turned out to be useless, Jim and Della are comforted by the awareness  that their partner values their happiness more than  his/her own most cherished possession.
%and neither of them  achieves the realization of their respective goals ($G_j$ and $G_d$). This negative outcome has two causes: first, that each agent's (cap)ability to realize his/her goal depends on the other agent's being in possession of a certain resource (specifically, on the other agent's keeping on possessing the resource he/she is initially in possession of); second, their lack of coordination.The story has a moral coda in which O.\ Henry quite cryptically claims that  the ``two foolish children'' are in fact ``the wisest''. Below, we give a formalization in LRC why this is indeed the case.

\medskip
To formalize the observations above, we will need a modification of the language of LRC  capturing the fact, which is sometimes relevant, that resources might have different {\em roles} e.g.\ in the generation or the acquisition of a given resource. %, either objectively or in the perspective of agents.
For instance, in the production of bread, the oven has a different role  as a resource  than  water and flour;
 in shooting sports, the shooter uses a shooting device, projectiles and a target in different roles, etc. Roles cannot be reduced to how resources are combined  irrespective of agency (this aspect is modelled by the pure-resource connectives $\rand$ and $\cdot$); rather, assigning roles to resources is a facet of agency. % and the realization of his/her capabilities entails both the destruction of the projectiles (or at least that they are not anymore in the shooter's possession) and the permanence of the target in the shooter's possession {\em in its role as target}.
 Accordingly, we consider the following  ternary connective for each agent: \[[-, -]\br -: \mathsf{Res}\times \mathsf{Res} \times \mathsf{Res}\rightarrow \mathsf{Fm},\] the intended meaning of which is `the agent is capable of obtaining the resource in the third coordinate, whenever in possession of the resources in the first two coordinates {\em in their respective roles}'.
Algebraically (and axiomatically), this connective is finitely join-reversing in the first two coordinates and finitely meet-preserving in the third one. Its introduction rules and display postulates are as expected:
\begin{center}
\begin{tabular}{c}
\AX$\Gamma\fCenter \alpha$
\AX$\Theta\fCenter \beta$
\AX$\gamma\fCenter \Sigma$
\TI$[\alpha, \beta]\br \gamma\fCenter [\Gamma, \Theta]\BR \Sigma$
\DisplayProof
$\quad$
\AX$X\fCenter [\alpha, \beta]\BR \gamma$
\UI $X\fCenter [\alpha, \beta]\br \gamma$
\DisplayProof
 \\
 \\
\AXC{$X\vdash [\Gamma, \Theta]\BR \Sigma$}
\UIC{$[\Gamma, \Theta]\LABR X \vdash \Sigma$}
\DisplayProof
$\quad$ 
\AX$X\fCenter [\Gamma, \Theta]\BR \Sigma$
\UI$\Gamma \fCenter [X, \Theta]\ABR^{\!1\,}\Sigma$
\DisplayProof
$\quad$
\AX$X\fCenter [\Gamma, \Theta]\BR \Sigma$
\UI$\Theta \fCenter [\Gamma, X]\ABR^{\!2\,}\Sigma$
\DisplayProof
\\
\end{tabular}
\end{center}
In addition, we need two unary diamond operators $\dr^1, \dr^2: \mathsf{Res}\rightarrow \mathsf{Fm}$ for each agent, the intended meaning of which is `the agent is in possession of the resource (in the argument) {\em in the first} (resp.\ {\em second}) {\em role}'.  The basic algebraic and axiomatic behaviour of $\dr^1$ and $\dr^2$ coincides with that of $\dr$, hence the introduction and display rules relative to these connectives are like those given for $\dr$. The various roles and their differences can be understood and formalized in different ways relative to different settings.  In the specific situation of the short story, we stipulate that $\dr^2$ has the meaning usually attributed to $\dr$, and understand $\dr^1\sigma$ as `the agent has resource $\sigma$ available in the role of target (or beacon)'. %These diamonds encode a more specified way in which a certain resource is in possession of an agent, which supports the plausibility of the axioms $\dr^1\sigma\pra \dr\sigma$ and $\dr^2\sigma\pra \dr\sigma$.

The interaction of these connectives, and the difference in meaning between $\dr^1$ and $\dr^2$, are captured by the following axiom:
\begin{equation}
\label{eq:roles-bdtwo}
\dr^1\sigma\land \dr^2\xi\land [\sigma, \xi]\br\chi\pra \dc\dr^2 \chi,\end{equation}
which is equivalent on perfect LRC-algebras to the following analytic rule:
\begin{center}
\AX$\DC\DR^{2\,} [\Sigma, \Xi]\LABR X\fCenter Y$
\RightLabel{RR}
\UI$\DR^1 \Sigma\,; \DR^{2\,}\Xi\,; X\fCenter Y$
\DisplayProof
\end{center}

%Also, the following axiom is plausible in this situation for any agent:
%\[\dc A\land \dc(A\pra B) \pra \dc B,\]
%which is equivalent to the following analytic rule:
%\begin{center}
%\AX$\DC (X \,; Y) \fCenter Z$
%\LeftLabel{MP$_{\dc}$}
%\UI$\DC X \,; \DC Y \fCenter Z$
%\DisplayProof
%\end{center}

Finally, in the specific case at hand, we will use the rules corresponding to the following slightly modified multi-agent versions of axiom \eqref{eq:roles-bdtwo}:

\[\dr_{\aj}^{1} \sigma \land \dr_{\aj}^{2} \xi \land [\sigma, \xi] \br_{\aj} \chi \pra \dc_{\aj} \dr_{\ad}^{2} \chi \quad \mbox{and} \quad \dr_{\ad}^{1} \sigma \land \dr_{\ad}^{2} \xi \land [\sigma, \xi] \br_{\ad} \chi \pra \dc_{\ad} \dr_{\aj}^{2} \chi.\]
The following table shows the assumptions of the present case study:
\begin{center}
\begin{tabular}{rr||l|l|l}
            &                  & Initial state      & Capabilities      & Abilities                                                                                                                          \\
           % &                  & assumptions        & assumptions     & assumptions                                                                                                                 %\\
\hline
Jim      & $\texttt{j}$ & $\dr^{1}_{\aj} \eta\,$ $\dr^{2}_{\aj} \omega $   & $[\eta, \omega]\br_{\aj} \beta$   &  $\dc_{\aj} \dr^{2}_{\ad} \beta \pra \dc_{\aj} \neg \dr^{2}_{\aj} \omega$                \\ %$\omega\br_\aj\beta \pra \dc_\aj \neg \dr_\aj\omega$ \\
Della   & $\texttt{d}$ & $\dr^{1}_{\ad} \omega\,$ $\dr^{2}_{\ad} \eta $    & $[\omega, \eta] \br_{\ad} \gamma$  & $\dc_{\ad} \dr^{2}_{\aj} \gamma \pra \dc_{\ad} \neg \dr^{2}_{\ad} \eta$      \\ % $\eta\br_\ad\gamma \pra \dc_\ad \neg \dr_\ad\eta$        \\
\end{tabular}
\end{center}
Let $H$ be the structural conjunction of the assumptions above. We aim at deriving the following sequent in the calculus D.LRC to which the analytic rules introduced above have been added:
   \[H \vdash \dc_{\aj} \neg \dr^{2}_{\aj} \omega \land \dc_{\aj} \dr^{2}_{\ad} \beta \land \dc_{\ad} \neg \dr^{2}_{\ad} \eta \land \dc_{\ad} \dr^{2}_{\aj} \gamma.\]

We do it in several steps: first,  the following derivation $\pi_1$:
\begin{center}
\begin{tabular}{c}
\AX$\eta\fCenter \eta$
\AX$\omega\fCenter \omega$
\AX$\beta\fCenter \beta$
\TI$[\eta, \omega] \br_{\aj} \beta \fCenter [\eta, \omega] \BR_{\aj} \beta$
\UI$[\eta, \omega] \LABR_{\aj} [\eta, \omega] \br_{\aj} \beta \fCenter \beta$
\UI$\DR^{2}_{\ad} \Big([\eta, \omega] \LABR_{\aj} [\eta, \omega] \br_{\aj} \beta\Big)\fCenter \dr^{2}_{\ad} \beta$
\UI$\DC_{\aj} \DR^{2}_{\ad} \Big([\eta, \omega] \LABR_{\aj} [\eta, \omega] \br_{\aj} \beta\Big) \fCenter \dc_{\aj} \dr^{2}_{\ad} \beta$
\RightLabel{RR$_{\aj\ad}$}
\UI$(\DR^{1}_{\aj} \eta\, ; \DR^{2}_{\aj} \omega)\, ; [\eta, \omega] \br_{\aj} \beta \fCenter \dc_{\aj} \dr^{2}_{\ad} \beta$
\dashedLine
\UI$(\dr^{1}_{\aj} \eta\, ; \dr^{2}_{\aj} \omega)\, ; [\eta, \omega] \br_{\aj} \beta \fCenter \dc_{\aj} \dr^{2}_{\ad} \beta$
\DisplayProof
 \\
\end{tabular}
\end{center}
With an analogous derivation $\pi_2$ we can prove that
\[\dr^{1}_{\ad} \omega\, ; \dr^{2}_{\ad} \eta\, ; [\omega, \eta] \br_{\ad} \gamma \fCenter \dc_{\ad} \dr^{2}_{\aj} \gamma.\]
Next, let  $\pi_3$ be the following derivation:
\begin{center}
\begin{tabular}{c}
\AX$\beta \fCenter \beta$
\UI$\DR^{2}_{\ad} \beta \fCenter \dr^{2}_{\ad} \beta$
\UI$\DC_{\aj} \dr^{2}_{\ad} \beta \fCenter \dc_{\aj} \dr^{2}_{\ad} \beta$
\UI$\dc_{\aj} \dr^{2}_{\ad} \beta \fCenter \dc_{\aj} \dr^{2}_{\ad} \beta$

\AX$\omega \fCenter \omega$
\UI$\DR^{2}_{\aj} \omega \fCenter \dr^{2}_{\aj} \omega$
\UI$\dr^{2}_{\aj} \omega \fCenter \dr^{2}_{\aj} \omega$

\AX$\bot \fCenter \bot$
\BI$\dr^{2}_{\aj} \omega \pra \bot \fCenter \dr^{2}_{\aj} \omega > \bot$
\UI$\dr^{2}_{\aj} \omega \pra \bot \fCenter \dr^{2}_{\aj} \omega \pra \bot$
\RightLabel{\fns def}
\UI$\neg \dr^{2}_{\aj} \omega \fCenter \neg \dr^{2}_{\aj} \omega$
\UI$\DC_{\aj} \neg \dr^{2}_{\aj} \omega \fCenter \dc_{\aj} \neg \dr^{2}_{\aj} \omega$
\UI$\dc_{\aj} \neg \dr^{2}_{\aj} \omega \fCenter \dc_{\aj} \neg \dr^{2}_{\aj} \omega$

\BI$\dc_{\aj} \dr^{2}_{\ad} \beta \pra \dc_{\aj} \neg \dr^{2}_{\aj} \omega \fCenter \dc_{\aj} \dr^{2}_{\ad} \beta > \dc_{\aj} \neg \dr^{2}_{\aj} \omega$
\UI$\dc_{\aj} \dr^{2}_{\ad} \beta \,; \dc_{\aj} \dr^{2}_{\ad} \beta \pra \dc_{\aj} \neg \dr^{2}_{\aj} \omega \fCenter \dc_{\aj} \neg \dr^{2}_{\aj} \omega$
\DisplayProof
 \\
\end{tabular}
\end{center}

With an analogous derivation $\pi_4$ we can prove that
\[\dc_{\ad} \dr^{2}_{\aj} \gamma\, ; \dc_{\ad} \dr^{2}_{\aj} \gamma \pra \dc_{\ad} \neg \dr^{2}_{\ad} \eta \fCenter \dc_{\ad} \neg \dr^{2}_{\ad} \eta.\]
Then, by applying cut (and left weakening) on $\pi_{1}$ and $\pi_{3}$ one derives:
\[\dr^{1}_{\aj} \eta\, ; \dr^{2}_{\aj} \omega\, ; [\eta, \omega] \br_{\aj} \beta \, ; \dc_{\aj} \dr^{2}_{\ad} \beta \pra \dc_{\aj} \neg \dr^{2}_{\aj} \omega \fCenter \dc_{\aj} \neg \dr^{2}_{\aj} \omega.\]
Likewise, by applying cut (and left weakening) on $\pi_{2}$ and $\pi_{4}$ one derives:
\[\dr^{1}_{\ad} \omega\, ; \dr^{2}_{\ad} \eta\, ; [\omega, \eta] \br_{\ad} \gamma \, ; \dc_{\ad} \dr^{2}_{\aj} \gamma \pra \dc_{\ad} \neg \dr^{2}_{\ad} \eta \fCenter \dc_{\ad} \neg \dr^{2}_{\ad} \eta.\]
The derivation is concluded with applications of right-introduction of $\land$ and left contraction rules.

\commment{
----
{\footnotesize
One dollar and eighty-seven cents. That was all. And sixty cents of it was in pennies. Pennies saved one and two at a time by bulldozing the grocer and the vegetable man and the butcher until one's cheeks burned with the silent imputation of parsimony that such close dealing implied. Three times Della counted it. One dollar and eighty- seven cents. And the next day would be Christmas.
There was clearly nothing to do but flop down on the shabby little couch and howl. So Della did it. Which instigates the moral reflection that life is made up of sobs, sniffles, and smiles, with sniffles predominating.

While the mistress of the home is gradually subsiding from the first stage to the second, take a look at the home. A furnished flat at \$8 per week. It did not exactly beggar description, but it certainly had that word on the lookout for the mendicancy squad.

In the vestibule below was a letter-box into which no letter would go, and an electric button from which no mortal finger could coax a ring. Also appertaining thereunto was a card bearing the name ``Mr. James Dillingham Young".

The ``Dillingham" had been flung to the breeze during a former period of prosperity when its possessor was being paid \$30 per week. Now, when the income was shrunk to \$20, though, they were thinking seriously of contracting to a modest and unassuming D. But whenever Mr. James Dillingham Young came home and reached his flat above he was called ``Jim" and greatly hugged by Mrs. James Dillingham Young, already introduced to you as Della. Which is all very good.

Della finished her cry and attended to her cheeks with the powder rag. She stood by the window and looked out dully at a gray cat walking a gray fence in a gray backyard. Tomorrow would be Christmas Day, and she had only \$1.87 with which to buy Jim a present. She had been saving every penny she could for months, with this result. Twenty dollars a week doesn't go far. Expenses had been greater than she had calculated. They always are. Only \$1.87 to buy a present for Jim. Her Jim. Many a happy hour she had spent planning for something nice for him. Something fine and rare and sterling--something just a little bit near to being worthy of the honor of being owned by Jim.

There was a pier-glass between the windows of the room. Perhaps you have seen a pier-glass in an \$8 flat. A very thin and very agile person may, by observing his reflection in a rapid sequence of longitudinal strips, obtain a fairly accurate conception of his looks. Della, being slender, had mastered the art.

Suddenly she whirled from the window and stood before the glass. her eyes were shining brilliantly, but her face had lost its color within twenty seconds. Rapidly she pulled down her hair and let it fall to its full length.

Now, there were two possessions of the James Dillingham Youngs in which they both took a mighty pride. One was Jim's gold watch that had been his father's and his grandfather's. The other was Della's hair. Had the queen of Sheba lived in the flat across the airshaft, Della would have let her hair hang out the window some day to dry just to depreciate Her Majesty's jewels and gifts. Had King Solomon been the janitor, with all his treasures piled up in the basement, Jim would have pulled out his watch every time he passed, just to see him pluck at his beard from envy.

So now Della's beautiful hair fell about her rippling and shining like a cascade of brown waters. It reached below her knee and made itself almost a garment for her. And then she did it up again nervously and quickly. Once she faltered for a minute and stood still while a tear or two splashed on the worn red carpet.

On went her old brown jacket; on went her old brown hat. With a whirl of skirts and with the brilliant sparkle still in her eyes, she fluttered out the door and down the stairs to the street.

Where she stopped the sign read: ``Mne. Sofronie. Hair Goods of All Kinds". One flight up Della ran, and collected herself, panting. Madame, large, too white, chilly, hardly looked the ``Sofronie".

``Will you buy my hair?" asked Della.

``I buy hair", said Madame. ``Take yer hat off and let's have a sight at the looks of it".

Down rippled the brown cascade.

``Twenty dollars", said Madame, lifting the mass with a practised hand.

``Give it to me quick", said Della.

Oh, and the next two hours tripped by on rosy wings. Forget the hashed metaphor. She was ransacking the stores for Jim's present.

She found it at last. It surely had been made for Jim and no one else. There was no other like it in any of the stores, and she had turned all of them inside out. It was a platinum fob chain simple and chaste in design, properly proclaiming its value by substance alone and not by meretricious ornamentation--as all good things should do. It was even worthy of The Watch. As soon as she saw it she knew that it must be Jim's. It was like him. Quietness and value--the description applied to both. Twenty-one dollars they took from her for it, and she hurried home with the 87 cents. With that chain on his watch Jim might be properly anxious about the time in any company. Grand as the watch was, he sometimes looked at it on the sly on account of the old leather strap that he used in place of a chain.

When Della reached home her intoxication gave way a little to prudence and reason. She got out her curling irons and lighted the gas and went to work repairing the ravages made by generosity added to love. Which is always a tremendous task, dear friends--a mammoth task.

Within forty minutes her head was covered with tiny, close-lying curls that made her look wonderfully like a truant schoolboy. She looked at her reflection in the mirror long, carefully, and critically.

``If Jim doesn't kill me", she said to herself, ``before he takes a second look at me, he'll say I look like a Coney Island chorus girl. But what could I do--oh! what could I do with a dollar and eighty- seven cents?"

At 7 o'clock the coffee was made and the frying-pan was on the back of the stove hot and ready to cook the chops.

Jim was never late. Della doubled the fob chain in her hand and sat on the corner of the table near the door that he always entered. Then she heard his step on the stair away down on the first flight, and she turned white for just a moment. She had a habit for saying little silent prayer about the simplest everyday things, and now she whispered: ``Please God, make him think I am still pretty".

The door opened and Jim stepped in and closed it. He looked thin and very serious. Poor fellow, he was only twenty-two--and to be burdened with a family! He needed a new overcoat and he was without gloves.

Jim stopped inside the door, as immovable as a setter at the scent of quail. His eyes were fixed upon Della, and there was an expression in them that she could not read, and it terrified her. It was not anger, nor surprise, nor disapproval, nor horror, nor any of the sentiments that she had been prepared for. He simply stared at her fixedly with that peculiar expression on his face.

Della wriggled off the table and went for him.

``Jim, darling", she cried, ``don't look at me that way. I had my hair cut off and sold because I couldn't have lived through Christmas without giving you a present. It'll grow out again--you won't mind, will you? I just had to do it. My hair grows awfully fast. Say `Merry Christmas!' Jim, and let's be happy. You don't know what a nice-- what a beautiful, nice gift I've got for you".

``You've cut off your hair?" asked Jim, laboriously, as if he had not arrived at that patent fact yet even after the hardest mental labor.

``Cut it off and sold it", said Della. ``Don't you like me just as well, anyhow? I'm me without my hair, ain't I?"

Jim looked about the room curiously.

``You say your hair is gone?" he said, with an air almost of idiocy.

``You needn't look for it", said Della. ``It's sold, I tell you--sold and gone, too. It's Christmas Eve, boy. Be good to me, for it went for you. Maybe the hairs of my head were numbered", she went on with sudden serious sweetness, ``but nobody could ever count my love for you. Shall I put the chops on, Jim?"

Out of his trance Jim seemed quickly to wake. He enfolded his Della. For ten seconds let us regard with discreet scrutiny some inconsequential object in the other direction. Eight dollars a week or a million a year--what is the difference? A mathematician or a wit would give you the wrong answer. The magi brought valuable gifts, but that was not among them. This dark assertion will be illuminated later on.

Jim drew a package from his overcoat pocket and threw it upon the table.

``Don't make any mistake, Dell," he said, "about me. I don't think there's anything in the way of a haircut or a shave or a shampoo that could make me like my girl any less. But if you'll unwrap that package you may see why you had me going a while at first".

White fingers and nimble tore at the string and paper. And then an ecstatic scream of joy; and then, alas! a quick feminine change to hysterical tears and wails, necessitating the immediate employment of all the comforting powers of the lord of the flat.

For there lay The Combs--the set of combs, side and back, that Della had worshipped long in a Broadway window. Beautiful combs, pure tortoise shell, with jewelled rims--just the shade to wear in the beautiful vanished hair. They were expensive combs, she knew, and her heart had simply craved and yearned over them without the least hope of possession. And now, they were hers, but the tresses that should have adorned the coveted adornments were gone.

But she hugged them to her bosom, and at length she was able to look up with dim eyes and a smile and say: ``My hair grows so fast, Jim!"

And them Della leaped up like a little singed cat and cried, ``Oh, oh!"

Jim had not yet seen his beautiful present. She held it out to him eagerly upon her open palm. The dull precious metal seemed to flash with a reflection of her bright and ardent spirit.

``Isn't it a dandy, Jim? I hunted all over town to find it. You'll have to look at the time a hundred times a day now. Give me your watch. I want to see how it looks on it".

Instead of obeying, Jim tumbled down on the couch and put his hands under the back of his head and smiled.

``Dell", said he, ``let's put our Christmas presents away and keep 'em a while. They're too nice to use just at present. I sold the watch to get the money to buy your combs. And now suppose you put the chops on".

The magi, as you know, were wise men--wonderfully wise men--who brought gifts to the Babe in the manger. They invented the art of giving Christmas presents. Being wise, their gifts were no doubt wise ones, possibly bearing the privilege of exchange in case of duplication. And here I have lamely related to you the uneventful chronicle of two foolish children in a flat who most unwisely sacrificed for each other the greatest treasures of their house. But in a last word to the wise of these days let it be said that of all who give gifts these two were the wisest. O all who give and receive gifts, such as they are wisest. Everywhere they are wisest. They are the magi.
}
----
\bigskip

The agent's capabilities are the following:

watch $\omega$\\
brush $\beta$\\
hair $\eta$\\
chain $\gamma$\\
$G_j$ $G_d$\\
}

\subsection{From local to global resilience (two production lines)}
\label{ssec:resilience}
Resilience is the ability of an agent or an organization to realize their goals notwithstanding unexpected changes and disruptions. The language of LRC provides a natural way to understand resilience as the capability to realize one's goal(s) in a range of situations characterized by the reduced availability of key resources. Consider for example a factory with two production lines for products $\gamma_1$ and $\gamma_2$. Product $\gamma_1$ is of higher quality than $\gamma_2$ and can only be produced using  resource $\alpha$, the availability of which is subject to fluctuations. Product $\gamma_2$ can be produced using either resource $\alpha$ or $\beta$, and the availability of $\beta$ is not subject to fluctuations. It is interesting to note that the `local' resilience in the production of $\gamma_2$ (namely, the fact that any shortage in  $\alpha$  can be dealt with by switching to $\beta$)  results in the resilience of both production lines. Indeed, when  $\alpha$ is available for only one of the two production lines, all of it can be employed in the production line for $\gamma_1$, and the production of $\gamma_2$ is switched to $\beta$.
In the formal treatment that follows, we notice that the  axioms $\dr\sigma\land \sigma\br\pi\pra \dr\pi$ and $\sigma\br \chi\land \pi\br \xi\pra \sigma\cdot\pi\br \chi\cdot\xi$ hold for the setting described above. These axioms are analytic and are equivalent on perfect LRC-algebras to  the following rules:
\begin{center}
\begin{tabular}{r c l}
\AX$X\fCenter \Gamma\BR \, \ADR Y$
\LeftLabel{BDR}
\UI$X\fCenter \DR\Gamma > Y$
\DisplayProof
&$\quad\quad$&
\AX$(\Gamma\LABR X)\odot(\Pi\LABR Y) \fCenter \Delta$
\RightLabel{Scalab}
\UI$(\Gamma\odot \Pi)\LABR(X\, ; Y)  \fCenter \Delta$
\DisplayProof
\\
\end{tabular}
\end{center}

\begin{center}
\begin{tabular}{c|c}
Resources                                                    & Capabilities                               \\
%assumptions                                                 & assumptions                             \\
\hline
$\dr (((\alpha \cdot \alpha)\ror \alpha)\cdot\beta) $    %                  \\
%$\dr\beta$       & \\%$\alpha \ror \beta \bc G_2$ \\
 &$\alpha \br \gamma_1$ \\ %\\ $\gamma_1\cdot\gamma_2 \bc (G_1\land G_2)$  \\
& $\alpha \ror \beta \br \gamma_2$  \\
\end{tabular}
\end{center}
We aim at showing that the assumptions above are enough to conclude that the factory is able to realize the production of both $\gamma_1$ and $\gamma_2$:
\[\dr (((\alpha \cdot \alpha)\ror \alpha)\cdot\beta)\, ; \alpha \br \gamma_1\, ; \alpha \ror \beta \br \gamma_2\,\vdash \dr(\gamma_1\cdot\gamma_2).\]
%\[\dr (((\alpha \cdot \alpha)\ror \alpha)\cdot\beta)\, ; \alpha \br \gamma_1\, ; \alpha \ror \beta \br \gamma_2\, ; \gamma_1\cdot\gamma_2 \bc (G_1\land G_2)\vdash \dc (G_1\land  G_2).\]
%Notice that the following is an instance of axiom BD2, and hence is derivable in D.LRC:
%\[\dr (\gamma_1\cdot\gamma_2)\land \gamma_1\cdot\gamma_2\bc (G_1\land G_2)\vdash \dc(G_1\land G_2). \]
%Hence modulo cut, it is enough to show that
%\[\dr (((\alpha \cdot \alpha)\ror \alpha)\cdot\beta)\, ; \alpha \br \gamma_1\, ; \alpha \ror \beta \br \gamma_2\vdash \dr (\gamma_1\cdot\gamma_2).\]

Notice that the following is an instance of $\dr\sigma\land \sigma\br\pi\pra \dr\pi$, and hence is derivable using the rule BDR:
\[\dr (((\alpha \cdot \alpha)\ror \alpha)\cdot\beta)\, ;((\alpha \cdot \alpha)\ror \alpha)\cdot\beta\br \gamma_1\cdot\gamma_2\vdash \dr (\gamma_1\cdot\gamma_2).\]

Hence, modulo cut and left weakening, it is enough to show that
\[ \alpha \br \gamma_1\, ; \alpha \ror \beta \br \gamma_2\vdash ((\alpha \cdot \alpha)\ror \alpha)\cdot\beta\br \gamma_1\cdot\gamma_2.\]
Notice that:
\begin{center}
\AXC{\ \ \ \ \ \ \ \ \ \ \ \ \ \ \ $\vdots$ \raisebox{1mm}{\fns{\begin{tabular}{@{}l}proof for\\R4\\ \end{tabular} }}}
%\AXC{\ \ \ $\vdots$ \fns R4}
\noLine
\UI$((\alpha\cdot\alpha)\ror\alpha)\cdot\beta\fCenter (\alpha\cdot\alpha)\cdot\beta\ror\alpha\cdot\beta $
\AX$\gamma_1 \fCenter \gamma_1$
\AX$\gamma_2 \fCenter \gamma_2$
\BI$\gamma_1 \odot \gamma_2 \fCenter \gamma_1 \cdot \gamma_2$
\UI$\gamma_1 \cdot \gamma_2 \fCenter \gamma_1 \cdot \gamma_2$
\BI$(\alpha\cdot\alpha)\cdot\beta\ror\alpha\cdot\beta\br \gamma_1\cdot\gamma_2\fCenter ((\alpha\cdot\alpha)\ror\alpha)\cdot\beta\BR \gamma_1\cdot\gamma_2$
\UI$(\alpha\cdot\alpha)\cdot\beta\ror\alpha\cdot\beta\br \gamma_1\cdot\gamma_2\fCenter ((\alpha\cdot\alpha)\ror\alpha)\cdot\beta\br \gamma_1\cdot\gamma_2$
\DisplayProof
\end{center}
Hence, modulo cut and left weakening, it is enough to show that
\[ \alpha \br \gamma_1\, ; \alpha \ror \beta \br \gamma_2\vdash (\alpha\cdot\alpha)\cdot\beta\ror\alpha\cdot\beta\br \gamma_1\cdot\gamma_2.\]
Indeed, a derivation for the sequent above is:

\begin{center}
\AXC{\ \ \ $\vdots$ \fns $\pi_1$}
\noLine
\UI$\alpha \br \gamma_1\, ; \alpha \ror \beta \br \gamma_2\fCenter \alpha\cdot (\alpha\ror\beta)\br \gamma_1\cdot\gamma_2$
\AXC{\ \ \ $\vdots$ \fns $\pi_2$}
\noLine
\UI$\alpha\cdot (\alpha\ror\beta)\br \gamma_1\cdot\gamma_2\fCenter (\alpha\cdot\alpha)\cdot\beta\ror\alpha\cdot\beta\br \gamma_1\cdot\gamma_2$
\BI$\alpha \br \gamma_1\, ; \alpha \ror \beta \br \gamma_2\fCenter (\alpha\cdot\alpha)\cdot\beta\ror\alpha\cdot\beta\br \gamma_1\cdot\gamma_2$
\DisplayProof
\end{center}

where $\pi_1$ is the following derivation:

\begin{center}

\AX$\alpha\fCenter \alpha$
\AX$\gamma_1\fCenter \gamma_1$
\BI$\alpha \br \gamma_1\fCenter \alpha\BR \gamma_1$
\UI$\alpha\LABR \alpha \br \gamma_1\fCenter\gamma_1$

\AX$\alpha \fCenter \alpha$
\AX$\beta \fCenter \beta$
\BI$\alpha \ror \beta \fCenter \alpha \RAND \beta$

\UI$\alpha\ror \beta\fCenter \alpha\ror\beta$
\AX$\gamma_2\fCenter \gamma_2$
\BI$\alpha\ror\beta \br \gamma_2\fCenter \alpha\ror\beta\BR \gamma_2$
\UI$(\alpha\ror\beta)\LABR \alpha \ror \beta \br \gamma_2\fCenter\gamma_2$
\BI$(\alpha\LABR \alpha \br \gamma_1)\odot (\alpha\ror\beta)\LABR \alpha \ror \beta \br \gamma_2\fCenter \gamma_1\cdot\gamma_2$
\LeftLabel{Scalab}
\UI$\alpha\odot (\alpha\ror\beta)\LABR (\alpha \br \gamma_1\, ; \alpha \ror \beta \br \gamma_2)\fCenter \gamma_1\cdot\gamma_2$
\UI$\alpha \br \gamma_1\, ; \alpha \ror \beta \br \gamma_2\fCenter \alpha\odot (\alpha\ror\beta)\BR \gamma_1\cdot\gamma_2$

\UI$\alpha\odot (\alpha\ror\beta) \fCenter \alpha \br \gamma_1\, ; \alpha \ror \beta \br \gamma_2\ABR \gamma_1\cdot\gamma_2$
\UI$\alpha\cdot (\alpha\ror\beta) \fCenter \alpha \br \gamma_1\, ; \alpha \ror \beta \br \gamma_2\ABR \gamma_1\cdot\gamma_2$

\UI$\alpha \br \gamma_1\, ; \alpha \ror \beta \br \gamma_2\fCenter \alpha\cdot (\alpha\ror\beta)\BR \gamma_1\cdot\gamma_2$

\UI$\alpha \br \gamma_1\, ; \alpha \ror \beta \br \gamma_2\fCenter \alpha\cdot (\alpha\ror\beta)\br \gamma_1\cdot\gamma_2$
\DisplayProof
\end{center}

and $\pi_2$ is the following derivation:
\begin{center}
{
\begin{tabular}{c}
\AX$\alpha \fCenter \alpha$
\UI$\alpha \odot \Phi \fCenter \alpha$
\UI$\Phi \fCenter \alpha \RCDOT \alpha$
\UI$\alpha \fCenter \alpha \RCDOT \alpha$
\UI$\alpha \odot \alpha \fCenter \alpha$
\UI$\alpha \cdot \alpha \fCenter \alpha$
\UI$\alpha \cdot \alpha \odot \beta \fCenter \alpha$
\UI$(\alpha \cdot \alpha \cdot \beta \fCenter \alpha$
\AX$\alpha \fCenter \alpha$
\UI$\alpha \odot \Phi \fCenter \alpha$
\UI$\Phi \fCenter \alpha \RCDOT \alpha$
\UI$\alpha \fCenter \alpha \RCDOT \alpha$
\UI$\alpha \odot \alpha \fCenter \alpha$
\UI$\alpha \odot \beta \fCenter \alpha$
\UI$\alpha \cdot \beta \fCenter \alpha$
\UI$\alpha \cdot \beta \fCenter \alpha \RAND \beta$
\UI$\alpha \cdot \beta \fCenter \alpha \ror \beta$
\BI$(\alpha\cdot\alpha\cdot\beta) \ror (\alpha\cdot\beta)\fCenter \alpha\cdot(\alpha\ror\beta)$

\AX$\gamma_1 \fCenter \gamma_1$
\AX$\gamma_2 \fCenter \gamma_2$
\BI$\gamma_1 \odot \gamma_2 \fCenter \gamma_1 \cdot \gamma_2$
\UI$\gamma_1 \cdot \gamma_2 \fCenter \gamma_1 \cdot \gamma_2$

\BI$\alpha\cdot (\alpha\ror\beta)\br \gamma_1\cdot\gamma_2\fCenter (\alpha\cdot\alpha\cdot\beta)\ror\alpha\cdot\beta\BR \gamma_1\cdot\gamma_2$
\UI$\alpha\cdot (\alpha\ror\beta)\br \gamma_1\cdot\gamma_2\fCenter (\alpha\cdot\alpha\cdot\beta)\ror\alpha\cdot\beta\br \gamma_1\cdot\gamma_2$
\DisplayProof
\end{tabular}
 }
\end{center}

{\commment{
\section{Appendix}

\begin{center}
{\footnotesize
\begin{tabular}{c}

\AXC{$\pi_1$}
\noLine
\UI$\alpha \fCenter \alpha$

\AXC{$\pi_2$}
\noLine
\UI$\beta \fCenter \beta$

\AXC{$\pi_3$}
\noLine
\UI$\gamma \fCenter \gamma$
\BI$\beta + \gamma \fCenter \beta \RAND \gamma$
\UI$\beta + \gamma \fCenter \beta + \gamma$
\BI$\alpha \odot (\beta + \gamma) \fCenter \alpha \cdot (\beta + \gamma)$

\AXC{$\pi_4$}
\noLine
\UI$A \fCenter A$
\BI$\alpha \cdot (\beta + \gamma) \bc A \fCenter (\alpha \odot (\beta + \gamma)) \BC A$
\UI$\alpha \cdot (\beta + \gamma) \bc A \fCenter \alpha \BC (\beta + \gamma \BC A)$
\UI$\alpha \cdot (\beta + \gamma) \bc A \fCenter \alpha \bc (\beta + \gamma \bc A)$

\AXC{$\pi_5$}
\noLine
\UI$\alpha \fCenter \alpha$

\AXC{$\pi_6$}
\noLine
\UI$\beta \fCenter \beta$
\UI$\beta \fCenter \beta \RAND \gamma$
\UI$\beta \fCenter \beta + \gamma$

\AXC{$\pi_7$}
\noLine
\UI$A \fCenter A$
\BI$\beta + \gamma \bc A \fCenter \beta \BC A$
\UI$\beta + \gamma \bc A \fCenter \beta \bc A$

\AXC{$\pi_8$}
\noLine
\UI$\gamma \fCenter \gamma$
\UI$\gamma \fCenter \beta \RAND \gamma$
\UI$\gamma \fCenter \beta + \gamma$

\AXC{$\pi_9$}
\noLine
\UI$A \fCenter A$
\BI$\beta + \gamma \bc A \fCenter \gamma \BC A$
\UI$\beta + \gamma \bc A \fCenter \gamma \bc A$
\BI$\beta + \gamma \bc A \,; \beta + \gamma \bc A \fCenter \beta \bc A \wedge \gamma \bc A$
\UI$\beta + \gamma \bc A \fCenter \beta \bc A \wedge \gamma \bc A$
\BI$\alpha \bc (\beta + \gamma \bc A) \fCenter \alpha \BC \beta \bc A \wedge \gamma \bc A$
\UI$\alpha \bc (\beta + \gamma \bc A) \fCenter \alpha \bc (\beta \bc A \wedge \gamma \bc A)$
\RightLabel{Cut}
\BI$\alpha \cdot (\beta + \gamma) \bc A \fCenter \alpha \bc (\beta \bc A \wedge \gamma \bc A)$
\DisplayProof
\end{tabular}
 }
\end{center}

\begin{center}
{\footnotesize
\begin{tabular}{c}
\rotatebox[origin=c]{-90}{$\rightsquigarrow$} 1
 \\
 \\

\AXC{$\pi_1$}
\noLine
\UI$\alpha \fCenter \alpha$

\AXC{$\pi_2$}
\noLine
\UI$\beta \fCenter \beta$

\AXC{$\pi_3$}
\noLine
\UI$\gamma \fCenter \gamma$
\BI$\beta + \gamma \fCenter \beta \RAND \gamma$
\UI$\beta + \gamma \fCenter \beta + \gamma$
\BI$\alpha \odot (\beta + \gamma) \fCenter \alpha \cdot (\beta + \gamma)$

\AXC{$\pi_4$}
\noLine
\UI$A \fCenter A$
\BI$\alpha \cdot (\beta + \gamma) \bc A \fCenter (\alpha \odot (\beta + \gamma)) \BC A$
\UI$\alpha \cdot (\beta + \gamma) \bc A \fCenter \alpha \BC (\beta + \gamma \BC A)$
\UI$\alpha \cdot (\beta + \gamma) \bc A \fCenter \alpha \bc (\beta + \gamma \bc A)$

\AXC{$\pi_5$}
\noLine
\UI$\alpha \fCenter \alpha$

\AXC{$\pi_6$}
\noLine
\UI$\beta \fCenter \beta$
\UI$\beta \fCenter \beta \RAND \gamma$
\UI$\beta \fCenter \beta + \gamma$

\AXC{$\pi_7$}
\noLine
\UI$A \fCenter A$
\BI$\beta + \gamma \bc A \fCenter \beta \BC A$
\UI$\beta + \gamma \bc A \fCenter \beta \bc A$

\AXC{$\pi_8$}
\noLine
\UI$\gamma \fCenter \gamma$
\UI$\gamma \fCenter \beta \RAND \gamma$
\UI$\gamma \fCenter \beta + \gamma$

\AXC{$\pi_9$}
\noLine
\UI$A \fCenter A$
\BI$\beta + \gamma \bc A \fCenter \gamma \BC A$
\UI$\beta + \gamma \bc A \fCenter \gamma \bc A$
\BI$\beta + \gamma \bc A \,; \beta + \gamma \bc A \fCenter \beta \bc A \wedge \gamma \bc A$
\UI$\beta + \gamma \bc A \fCenter \beta \bc A \wedge \gamma \bc A$
\BI$\alpha \bc (\beta + \gamma \bc A) \fCenter \alpha \BC \beta \bc A \wedge \gamma \bc A$
\RightLabel{Cut}
\BI$\alpha \cdot (\beta + \gamma) \bc A \fCenter \alpha \bc (\beta \bc A \wedge \gamma \bc A)$
\DisplayProof
\end{tabular}
 }
\end{center}

 \begin{center}
{\footnotesize
\begin{tabular}{c}
\rotatebox[origin=c]{-90}{$\rightsquigarrow$}
 \\
 \\

\AXC{$\pi_5$}
\noLine
\UI$\alpha \fCenter \alpha$
%%%

\AXC{$\pi_1$}
\noLine
\UI$\alpha \fCenter \alpha$

\AXC{$\pi_2$}
\noLine
\UI$\beta \fCenter \beta$

\AXC{$\pi_3$}
\noLine
\UI$\gamma \fCenter \gamma$
\BI$\beta + \gamma \fCenter \beta \RAND \gamma$
\UI$\beta + \gamma \fCenter \beta + \gamma$
\BI$\alpha \odot (\beta + \gamma) \fCenter \alpha \cdot (\beta + \gamma)$

\AXC{$\pi_4$}
\noLine
\UI$A \fCenter A$
\BI$\alpha \cdot (\beta + \gamma) \bc A \fCenter (\alpha \odot (\beta + \gamma)) \BC A$
\UI$\alpha \cdot (\beta + \gamma) \bc A \fCenter \alpha \BC (\beta + \gamma \BC A)$
\UI$\alpha \cdot (\beta + \gamma) \bc A \fCenter \alpha \BC (\beta + \gamma \textcolor{red}{\bc} A)$
%%%AAA

\AXC{$\pi_6$}
\noLine
\UI$\beta \fCenter \beta$
\UI$\beta \fCenter \beta \RAND \gamma$
\UI$\beta \fCenter \beta + \gamma$

\AXC{$\pi_7$}
\noLine
\UI$A \fCenter A$
\BI$\beta + \gamma \bc A \fCenter \beta \BC A$
\UI$\beta + \gamma \bc A \fCenter \beta \bc A$

\AXC{$\pi_8$}
\noLine
\UI$\gamma \fCenter \gamma$
\UI$\gamma \fCenter \beta \RAND \gamma$
\UI$\gamma \fCenter \beta + \gamma$

\AXC{$\pi_9$}
\noLine
\UI$A \fCenter A$
\BI$\beta + \gamma \bc A \fCenter \gamma \BC A$
\UI$\beta + \gamma \bc A \fCenter \gamma \bc A$
\BI$\beta + \gamma \bc A \,; \beta + \gamma \bc A \fCenter \beta \bc A \wedge \gamma \bc A$
\UI$\beta + \gamma \bc A \fCenter \beta \bc A \wedge \gamma \bc A$
\RightLabel{R-Cut}
\BI$\alpha \cdot (\beta + \gamma) \bc A \fCenter \alpha \BC (\beta \bc A \wedge \gamma \bc A)$

\UI$\alpha \fCenter \alpha \cdot (\beta + \gamma) \bc A \BC (\beta \bc A \wedge \gamma \bc A)$

\RightLabel{Cut XXX}
\BI$\alpha \fCenter \alpha \cdot (\beta + \gamma) \bc A \BC (\beta \bc A \wedge \gamma \bc A)$

\UI$\alpha \cdot (\beta + \gamma) \bc A \fCenter \alpha \BC (\beta \bc A \wedge \gamma \bc A)$
\UI$\alpha \cdot (\beta + \gamma) \bc A \fCenter \alpha \bc (\beta \bc A \wedge \gamma \bc A)$
\DisplayProof
\end{tabular}
 }
\end{center}

\begin{center}

{\scriptsize
\begin{tabular}{c}
\rotatebox[origin=c]{-90}{$\rightsquigarrow$} 2
 \\
 \\

\AXC{$\pi_1$}
\noLine
\UI$\alpha \fCenter \alpha$

\AXC{$\pi_2$}
\noLine
\UI$\beta \fCenter \beta$

\AXC{$\pi_3$}
\noLine
\UI$\gamma \fCenter \gamma$
\BI$\beta + \gamma \fCenter \beta \RAND \gamma$
\UI$\beta + \gamma \fCenter \beta + \gamma$
\BI$\alpha \odot (\beta + \gamma) \fCenter \alpha \cdot (\beta + \gamma)$

\AXC{$\pi_4$}
\noLine
\UI$A \fCenter A$
\BI$\alpha \cdot (\beta + \gamma) \bc A \fCenter (\alpha \odot (\beta + \gamma)) \BC A$
\UI$\alpha \cdot (\beta + \gamma) \bc A \fCenter \alpha \BC (\beta + \gamma \BC A)$
\UI$\alpha \cdot (\beta + \gamma) \bc A \fCenter \alpha \BC (\beta + \gamma \textcolor{red}{\bc} A)$
%%%AAA

\AXC{$\pi_6$}
\noLine
\UI$\beta \fCenter \beta$
\UI$\beta \fCenter \beta \RAND \gamma$
\UI$\beta \fCenter \beta + \gamma$

\AXC{$\pi_7$}
\noLine
\UI$A \fCenter A$
\BI$\beta + \gamma \bc A \fCenter \beta \BC A$

\RightLabel{\tiny R-Cut}
\BI$\alpha \cdot (\beta + \gamma) \bc A \fCenter \alpha \BC (\beta \BC A)$
\UI$\alpha \cdot (\beta + \gamma) \bc A \fCenter \alpha \BC (\beta \textcolor{red}{\bc} A)$

\AXC{$\pi_1$}
\noLine
\UI$\alpha \fCenter \alpha$

\AXC{$\pi_2$}
\noLine
\UI$\beta \fCenter \beta$

\AXC{$\pi_3$}
\noLine
\UI$\gamma \fCenter \gamma$
\BI$\beta + \gamma \fCenter \beta \RAND \gamma$
\UI$\beta + \gamma \fCenter \beta + \gamma$
\BI$\alpha \odot (\beta + \gamma) \fCenter \alpha \cdot (\beta + \gamma)$

\AXC{$\pi_4$}
\noLine
\UI$A \fCenter A$
\BI$\alpha \cdot (\beta + \gamma) \bc A \fCenter (\alpha \odot (\beta + \gamma)) \BC A$
\UI$\alpha \cdot (\beta + \gamma) \bc A \fCenter \alpha \BC (\beta + \gamma \BC A)$
\UI$\alpha \cdot (\beta + \gamma) \bc A \fCenter \alpha \BC (\beta + \gamma \textcolor{red}{\bc} A)$
%%%AAAB

\AXC{$\pi_8$}
\noLine
\UI$\gamma \fCenter \gamma$
\UI$\gamma \fCenter \beta \RAND \gamma$
\UI$\gamma \fCenter \beta + \gamma$

\AXC{$\pi_9$}
\noLine
\UI$A \fCenter A$
\BI$\beta + \gamma \bc A \fCenter \gamma \BC A$

\RightLabel{\tiny R-Cut}
\BI$\alpha \cdot (\beta + \gamma) \bc A \fCenter \alpha \BC (\gamma \BC A)$
\UI$\alpha \cdot (\beta + \gamma) \bc A \fCenter \alpha \BC (\gamma \textcolor{red}{\bc} A)$

\BI$\alpha \cdot (\beta + \gamma) \bc A \,; \alpha \cdot (\beta + \gamma) \bc A \fCenter \alpha \BC (\beta \bc A \wedge \gamma \bc A)$
\UI$\alpha \cdot (\beta + \gamma) \bc A \fCenter \alpha \BC (\beta \bc A \wedge \gamma \bc A)$
\UI$\alpha \cdot (\beta + \gamma) \bc A \fCenter \alpha \bc (\beta \bc A \wedge \gamma \bc A)$
\DisplayProof
\end{tabular}
 }
\end{center}

\begin{center}

{\tiny
\begin{tabular}{c}
\rotatebox[origin=c]{-90}{$\rightsquigarrow$} 3
 \\
 \\

\AXC{$\pi_6$}
\noLine
\UI$\beta \fCenter \beta$
\UI$\beta \fCenter \beta \RAND \gamma$
\UI$\beta \fCenter \beta + \gamma$

\AXC{$\pi_1$}
\noLine
\UI$\alpha \fCenter \alpha$

\AXC{$\pi_2$}
\noLine
\UI$\beta \fCenter \beta$

\AXC{$\pi_3$}
\noLine
\UI$\gamma \fCenter \gamma$
\BI$\beta + \gamma \fCenter \beta \RAND \gamma$
\UI$\beta + \gamma \fCenter \beta + \gamma$
\BI$\alpha \odot (\beta + \gamma) \fCenter \alpha \cdot (\beta + \gamma)$

\AXC{$\pi_4$}
\noLine
\UI$A \fCenter A$
\BI$\alpha \cdot (\beta + \gamma) \bc A \fCenter (\alpha \odot (\beta + \gamma)) \BC A$
\UI$\alpha \cdot (\beta + \gamma) \bc A \fCenter \alpha \BC (\beta + \gamma \BC A)$
%%%AAA

\AXC{$\pi_7$}
\noLine
\UI$A \fCenter A$

\RightLabel{\tiny R-Cut}
\BI$\alpha \cdot (\beta + \gamma) \bc A \fCenter \alpha \BC (\beta + \gamma \BC A)$

\RightLabel{\tiny L-Cut???}
\BI$\alpha \cdot (\beta + \gamma) \bc A \fCenter \alpha \BC (\beta \BC A)$
\UI$\alpha \cdot (\beta + \gamma) \bc A \fCenter \alpha \BC (\beta \textcolor{red}{\bc} A)$

\AXC{$\pi_8$}
\noLine
\UI$\gamma \fCenter \gamma$
\UI$\gamma \fCenter \beta \RAND \gamma$
\UI$\gamma \fCenter \beta + \gamma$

\AXC{$\pi_1$}
\noLine
\UI$\alpha \fCenter \alpha$

\AXC{$\pi_2$}
\noLine
\UI$\beta \fCenter \beta$

\AXC{$\pi_3$}
\noLine
\UI$\gamma \fCenter \gamma$
\BI$\beta + \gamma \fCenter \beta \RAND \gamma$
\UI$\beta + \gamma \fCenter \beta + \gamma$
\BI$\alpha \odot (\beta + \gamma) \fCenter \alpha \cdot (\beta + \gamma)$

\AXC{$\pi_4$}
\noLine
\UI$A \fCenter A$
\BI$\alpha \cdot (\beta + \gamma) \bc A \fCenter (\alpha \odot (\beta + \gamma)) \BC A$
\UI$\alpha \cdot (\beta + \gamma) \bc A \fCenter \alpha \BC (\beta + \gamma \BC A)$
%%%AAAB

\AXC{$\pi_9$}
\noLine
\UI$A \fCenter A$

\RightLabel{\tiny R-Cut}
\BI$\beta + \gamma \bc A \fCenter \gamma \BC A$

\RightLabel{\tiny L-Cut???}
\BI$\alpha \cdot (\beta + \gamma) \bc A \fCenter \alpha \BC (\gamma \BC A)$
\UI$\alpha \cdot (\beta + \gamma) \bc A \fCenter \alpha \BC (\gamma \textcolor{red}{\bc} A)$

\BI$\alpha \cdot (\beta + \gamma) \bc A \,; \alpha \cdot (\beta + \gamma) \bc A \fCenter \alpha \BC (\beta \bc A \wedge \gamma \bc A)$
\UI$\alpha \cdot (\beta + \gamma) \bc A \fCenter \alpha \BC (\beta \bc A \wedge \gamma \bc A)$
\UI$\alpha \cdot (\beta + \gamma) \bc A \fCenter \alpha \bc (\beta \bc A \wedge \gamma \bc A)$
\DisplayProof
\end{tabular}
 }
\end{center}

\begin{center}

{\tiny
\begin{tabular}{c}
\rotatebox[origin=c]{-90}{$\rightsquigarrow$} 4
 \\
 \\

\AXC{$\pi_6$}
\noLine
\UI$\beta \fCenter \beta$
\UI$\beta \fCenter \beta \RAND \gamma$
\UI$\beta \fCenter \beta + \gamma$

\AXC{$\pi_1$}
\noLine
\UI$\alpha \fCenter \alpha$

\AXC{$\pi_2$}
\noLine
\UI$\beta \fCenter \beta$

\AXC{$\pi_3$}
\noLine
\UI$\gamma \fCenter \gamma$
\BI$\beta + \gamma \fCenter \beta \RAND \gamma$
\UI$\beta + \gamma \fCenter \beta + \gamma$
\BI$\alpha \odot (\beta + \gamma) \fCenter \alpha \cdot (\beta + \gamma)$

\AXC{$\pi_4$}
\noLine
\UI$A \fCenter A$
\BI$\alpha \cdot (\beta + \gamma) \bc A \fCenter (\alpha \odot (\beta + \gamma)) \BC A$
\UI$\alpha \cdot (\beta + \gamma) \bc A \fCenter \alpha \BC (\beta + \gamma \BC A)$
%%%AAA

\RightLabel{\tiny L-Cut???}
\BI$\alpha \cdot (\beta + \gamma) \bc A \fCenter \alpha \BC (\beta \BC A)$
\UI$\alpha \cdot (\beta + \gamma) \bc A \fCenter \alpha \BC (\beta \textcolor{red}{\bc} A)$

\AXC{$\pi_8$}
\noLine
\UI$\gamma \fCenter \gamma$
\UI$\gamma \fCenter \beta \RAND \gamma$
\UI$\gamma \fCenter \beta + \gamma$

\AXC{$\pi_1$}
\noLine
\UI$\alpha \fCenter \alpha$

\AXC{$\pi_2$}
\noLine
\UI$\beta \fCenter \beta$

\AXC{$\pi_3$}
\noLine
\UI$\gamma \fCenter \gamma$
\BI$\beta + \gamma \fCenter \beta \RAND \gamma$
\UI$\beta + \gamma \fCenter \beta + \gamma$
\BI$\alpha \odot (\beta + \gamma) \fCenter \alpha \cdot (\beta + \gamma)$

\AXC{$\pi_4$}
\noLine
\UI$A \fCenter A$
\BI$\alpha \cdot (\beta + \gamma) \bc A \fCenter (\alpha \odot (\beta + \gamma)) \BC A$
\UI$\alpha \cdot (\beta + \gamma) \bc A \fCenter \alpha \BC (\beta + \gamma \BC A)$
%%%AAAB

\RightLabel{\tiny L-Cut???}
\BI$\alpha \cdot (\beta + \gamma) \bc A \fCenter \alpha \BC (\gamma \BC A)$
\UI$\alpha \cdot (\beta + \gamma) \bc A \fCenter \alpha \BC (\gamma \textcolor{red}{\bc} A)$

\BI$\alpha \cdot (\beta + \gamma) \bc A \,; \alpha \cdot (\beta + \gamma) \bc A \fCenter \alpha \BC (\beta \bc A \wedge \gamma \bc A)$
\UI$\alpha \cdot (\beta + \gamma) \bc A \fCenter \alpha \BC (\beta \bc A \wedge \gamma \bc A)$
\UI$\alpha \cdot (\beta + \gamma) \bc A \fCenter \alpha \bc (\beta \bc A \wedge \gamma \bc A)$
\DisplayProof
\end{tabular}
 }
\end{center}

\begin{center}

{\tiny
\begin{tabular}{c}
\rotatebox[origin=c]{-90}{$\rightsquigarrow$} 5
 \\
 \\

\AXC{$\pi_1$}
\noLine
\UI$\alpha \fCenter \alpha$

\AXC{$\pi_6$}
\noLine
\UI$\beta \fCenter \beta$
\UI$\beta \fCenter \beta \RAND \gamma$
\UI$\beta \fCenter \beta + \gamma$
\AXC{$\pi_2$}
\noLine
\UI$\beta \fCenter \beta$
\AXC{$\pi_3$}
\noLine
\UI$\gamma \fCenter \gamma$
\BI$\beta + \gamma \fCenter \beta \RAND \gamma$
\RightLabel{Cut}
\BI$\beta \fCenter \beta \RAND \gamma$

\UI$\beta \fCenter \beta + \gamma$
\BI$\alpha \odot \beta \fCenter \alpha \cdot (\beta + \gamma)$

\AXC{$\pi_4$}
\noLine
\UI$A \fCenter A$
\BI$\alpha \cdot (\beta + \gamma) \bc A \fCenter (\alpha \odot \beta) \BC A$
\UI$\alpha \cdot (\beta + \gamma) \bc A \fCenter \alpha \BC (\beta \BC A)$
%%%AAA

\UI$\alpha \cdot (\beta + \gamma) \bc A \fCenter \alpha \BC (\beta \textcolor{red}{\bc} A)$

\AXC{$\pi_1$}
\noLine
\UI$\alpha \fCenter \alpha$

\AXC{$\pi_8$}
\noLine
\UI$\gamma \fCenter \gamma$
\UI$\gamma \fCenter \beta \RAND \gamma$
\UI$\gamma \fCenter \beta + \gamma$
\AXC{$\pi_2$}
\noLine
\UI$\beta \fCenter \beta$
\AXC{$\pi_3$}
\noLine
\UI$\gamma \fCenter \gamma$
\BI$\beta + \gamma \fCenter \beta \RAND \gamma$
\RightLabel{Cut}
\BI$\gamma \fCenter \beta \RAND \gamma$
\UI$\gamma \fCenter \beta + \gamma$

\BI$\alpha \odot \gamma \fCenter \alpha \cdot (\beta + \gamma)$

\AXC{$\pi_4$}
\noLine
\UI$A \fCenter A$
\BI$\alpha \cdot (\beta + \gamma) \bc A \fCenter (\alpha \odot \gamma) \BC A$
\UI$\alpha \cdot (\beta + \gamma) \bc A \fCenter \alpha \BC (\gamma \BC A)$
%%%AAAB

\UI$\alpha \cdot (\beta + \gamma) \bc A \fCenter \alpha \BC (\gamma \textcolor{red}{\bc} A)$

\BI$\alpha \cdot (\beta + \gamma) \bc A \,; \alpha \cdot (\beta + \gamma) \bc A \fCenter \alpha \BC (\beta \bc A \wedge \gamma \bc A)$
\UI$\alpha \cdot (\beta + \gamma) \bc A \fCenter \alpha \BC (\beta \bc A \wedge \gamma \bc A)$
\UI$\alpha \cdot (\beta + \gamma) \bc A \fCenter \alpha \bc (\beta \bc A \wedge \gamma \bc A)$
\DisplayProof
\end{tabular}
 }
\end{center}

\begin{center}

{\tiny
\begin{tabular}{c}
\rotatebox[origin=c]{-90}{$\rightsquigarrow$} 6
 \\
 \\

\AXC{$\pi_1$}
\noLine
\UI$\alpha \fCenter \alpha$

\AXC{$\pi_6$}
\noLine
\UI$\beta \fCenter \beta$
\UI$\beta \fCenter \beta \RAND \gamma$
\UI$\beta \fCenter \beta + \gamma$
\BI$\alpha \odot \beta \fCenter \alpha \cdot (\beta + \gamma)$

\AXC{$\pi_4$}
\noLine
\UI$A \fCenter A$
\BI$\alpha \cdot (\beta + \gamma) \bc A \fCenter (\alpha \odot \beta) \BC A$
\UI$\alpha \cdot (\beta + \gamma) \bc A \fCenter \alpha \BC (\beta \BC A)$
%%%AAA

\UI$\alpha \cdot (\beta + \gamma) \bc A \fCenter \alpha \BC (\beta \textcolor{red}{\bc} A)$

\AXC{$\pi_1$}
\noLine
\UI$\alpha \fCenter \alpha$

\AXC{$\pi_8$}
\noLine
\UI$\gamma \fCenter \gamma$
\UI$\gamma \fCenter \beta \RAND \gamma$
\UI$\gamma \fCenter \beta + \gamma$

\BI$\alpha \odot \gamma \fCenter \alpha \cdot (\beta + \gamma)$

\AXC{$\pi_4$}
\noLine
\UI$A \fCenter A$
\BI$\alpha \cdot (\beta + \gamma) \bc A \fCenter (\alpha \odot \gamma) \BC A$
\UI$\alpha \cdot (\beta + \gamma) \bc A \fCenter \alpha \BC (\gamma \BC A)$
%%%AAAB

\UI$\alpha \cdot (\beta + \gamma) \bc A \fCenter \alpha \BC (\gamma \textcolor{red}{\bc} A)$

\BI$\alpha \cdot (\beta + \gamma) \bc A \,; \alpha \cdot (\beta + \gamma) \bc A \fCenter \alpha \BC (\beta \bc A \textcolor{red}{\wedge} \gamma \bc A)$
\UI$\alpha \cdot (\beta + \gamma) \bc A \fCenter \alpha \BC (\beta \bc A \wedge \gamma \bc A)$
\UI$\alpha \cdot (\beta + \gamma) \bc A \fCenter \alpha \bc (\beta \bc A \wedge \gamma \bc A)$
\DisplayProof
\end{tabular}
 }
\end{center}
 }}

 \section{Conclusions and further directions}

\paragraph{Resources and capabilities.} In the present paper, a logical framework is introduced aimed at capturing and reasoning about  resource flow within organizations. This framework contributes to the line of investigation of the logics of agency (cf.~e.g.~\cite{dignum2004model,Dignum,elgesem1997modal,belnap1991backwards,chellas1995bringing}) by focusing specifically on the {\em resource}-dimension of agents' (cap)abilities (e.g.~to use resources to achieve goals, to transform resources into other resources, and to coordinate the use of resources with other agents). Formally, the logic of resources and capabilities (LRC) has been introduced in a language consisting of formula-terms and resource-terms. Besides pure-formula and pure-resource connectives, the language of LRC includes connectives bridging the two types in various ways. Although action-terms are not included in LRC, perhaps the logical system of which LRC is most reminiscent is the logic of capabilities introduced in \cite{van1994logic},  which formalizes   the capabilities of agents to perform actions. Indeed, looking past the differences between the two formalisms deriving from the inherent differences between actions and resources, the focus of both axiomatizations is {\em interaction},  between  (cap)abilities and actions in \cite{van1994logic}, and between (cap)abilities and resources in the present paper. Precisely its focus on interaction makes it  worthwhile  to recast the logical framework of  \cite{van1994logic} in a multi-type environment.
%\marginnote{Expand on the approach of v.d.Hoek and remark that while they have actions and not resources we have resources and not actions. Natural further direction is to have also resources, actions and capabilities}
%The logical formalism of  is similar in spirit to STIT logics [], in that it aims at describing actions which are specifically brought about by the activity of agents (rather than actions capturing a more general notion of change, which can be formulated independently of agency). Hence, rather than being treated as primitive, as is done in PDL, actions are regarded as emerging from a hierarchy of more primitive notions, the most basic of which are agency, and agents' capabilities.

\paragraph{A study in algebraic proof theory.} The main technical contribution of the paper is the introduction of the multi-type calculus D.LRC. The definition of this calculus and the proofs of its basic properties hinge on the integration of two  theories in algebraic logic and structural proof theory---namely, {\em unified correspondence} and {\em multi-type calculi}---which  originated independently of each other. This integration  contributes to the research program of {\em algebraic proof theory} \cite{ciabattoni2009expanding,ciabattoni2012algebraic},  to which the results of the present paper pertain. Specifically, the rules of D.LRC are introduced, and their soundness  proved, by applying (and adapting) the ALBA-based methodology of \cite{GMPTZ} (cf.\ also \cite{ciabattoni2016power} for a purely proof-theoretic perspective on the same methodology); cut elimination is proved `Belnap-style', by verifying that D.LRC satisfies the assumptions of the cut elimination metatheorem for multi-type calculi of \cite{Trends}; conservativity is proved  following the general proof strategy for conservativity illustrated in \cite{GMPTZ}, to which  the canonicity of the axioms of the Hilbert-style presentation of LRC is key.

It is perhaps worth stressing that  the theory of proper display calculi developed in \cite{GMPTZ} cannot  be applied directly to the Hilbert-style presentation of LRC, for two reasons. Firstly, the setting of \cite{GMPTZ} is a pure-formula setting, while the setting of the present paper is multi-type. However, the results of \cite{GMPTZ} can   be ported to the multi-type setting (as done also in \cite{Inquisitive,latticelogPdisplayed,linearlogPdisplayed}); indeed, the algorithm ALBA and the definition of analytic inductive inequalities are grounded in the order-theoretic properties of the algebraic interpretations of the logical connectives, and remain fundamentally unchanged when applied to maps with the required order-theoretic properties, irrespective of whether these maps are operations on one algebra or between different algebras. The second, more serious reason is that the algebraic interpretation of the capability connective $\bc$ is a map which reverses finite joins in its first coordinate but is only {\em monotone} (rather than finitely meet-preserving) in its second coordinate. Hence, (the multi-type version of) the  definition of (analytic) inductive inequalities given in \cite{GMPTZ} does not apply to many  axioms of the Hilbert-style presentation of LRC, and hence some  results (e.g.\ the canonicity results of Section \ref{ssec:algebraic canonicity}) could not be immediately inferred by directly applying the general theory. However, as we saw in Section \ref{ssec:soundness}, the algorithm ALBA is successful on the LRC axiomatization, which suggests the possibility of generalizing these results to arbitrary multi-type signatures in which operations are allowed to be only monotone or antitone in some coordinates. Moreover, unified correspondence theory covers various settings,  from general lattice-based propositional logics \cite{CoPa:non-dist,CFPPTW,CFPPTW17,CP:constructive}, to regular \cite{PaSoZh16} and monotone modal logics \cite{FrPaSa16},  (distributive) lattice-based mu-calculi \cite{CFPS15,CoCr14,CCPZ},  hybrid logic \cite{ConRob} and many-valued logic \cite{LeRoux:MThesis:2016}. It would be interesting to investigate whether structural proof calculi for  each of these settings (or for multi-type logics based on them) could be defined by suitably extending the techniques employed in the design of D.LRC.

\paragraph{Proof-theoretic formalizations of social behaviour.} In Section \ref{sec:casestudies}, we have  discussed the formalization of  situations revolving around some instances of resource flow. These situations have been captured as  inferences or sequents in the language of LRC, and derived in the basic calculus D.LRC or in some of its analytic extensions. This proof-theoretic analysis makes it possible to single out the steps and assumptions which are {\em essential} to a given situation. For instance, thanks to this analysis, it is clear that the full power of classical logic is {\em not essential} to any case study we treated. In fact, as can be readily verified by inspection, many  derivations treated in Section \ref{sec:casestudies} need  less than the full power of intuitionistic logic, which is the propositional base of LRC. Also, reasoning from assumptions in a given proof-theoretic environment corresponds semantically to reasoning on {\em all} the models of that environment satisfying those assumptions.  This is a {\em safer} practice than e.g.\ starting out with  an ad-hoc model, since %to   rather than on some ad hoc model %allows to  naturally glide over  inessential facts; for instance in the case study of the homework correction we do not know whether the fact that one agent has a certain set of assignments implies that the other does not and so on
it makes it impossible to rely on some implicit assumption or other  extra feature of a chosen model.
% \marginnote{the proof-theoretic approach on modelling situations; firstly classical logic inessential; also great advantage: it makes it possible to not saying anything about inessential facts; for instance in the case study of the homework correction we do not know whether the fact that one agent has a certain set of assignments implies that the other does not and so on.}

 \paragraph{The pure-resource fragment.} In Section \ref{ssec:HLRC} we mentioned that  the fact that $1$ coincides with the weakest resource entails (and is in fact equivalent to) the validity of the sequents $\alpha\cdot\beta\vdash \alpha$ and $\alpha\cdot\beta\vdash \beta$, which in some contexts seems too restrictive. How to relax this restriction is current work in progress. However, this restriction brings also some advantages. Indeed, as discussed earlier on in Section \ref{ssec:HLRC}, this restriction makes the pure resource fragment of LRC very similar to (the exponential-free fragment of) linear affine logic, which, unlike general linear logic, is decidable \cite{kopylov1995decidability,OkadaTerui}. Hence, this leaves open the question of the decidability of LRC (see also below).

 \paragraph{Agents as first-class citizens.} In the present paper, we focused on the basic setting of LRC, and for the sake of not overloading notation and machinery, we have treated agents as parameters. However, a fully multi-type treatment  would include terms of type $\mathsf{Ag}$ (agents) in the language, as done e.g.\ in \cite{Multitype}. This will be particularly relevant to the formalization of organization theory, where terms of type $\mathsf{Ag}$ will represent members of an organization, and  $\mathsf{Ag}$ might be endowed with additional structure: for instance it can be a graph (capturing networks of agents), or a partial order (capturing hierarchies), or partitioned in coalitions or teams. Having agents as first-class citizens of the language will also make possible to attribute {\em roles} to them, analogously to the way roles  are attributed to resources in Section \ref{ssec:gifts}. Roles in turn could provide concrete handles  towards the modelling of agent coordination.

 \paragraph{Group capabilities.} Closely related to the issue of the previous paragraph is the formalization of  various forms of  group capabilities. This theme is particularly relevant to organization theory, since it might help to capture e.g.\ the contribution of leadership to the results of an organization, versus the advantages of self-organization. Another interesting notion in organization theory which could benefit from a formal theory of group capabilities is {\em tacit group knowledge} \cite{shamsie2013looking}, emerging from the individual capabilities to adapt, often implicitly,  to the behaviour of others. %, especially where it regards the capability of a group to perform a certain task well because of the web of reciprocal, and often implicit, expectations group members have of each other's behaviour.

 \paragraph{Different types of resources.} Key to the analysis of the case study of  Section \ref{ssec:crow} was the interplay between reusable and non-reusable resources. The treatment of this case study suggests that analytic extensions of D.LRC can be used to develop a formal theory of resource flow that also captures other differences between resources (e.g.\ storable vs.\ non storable, scalable vs.\ non scalable),   their interaction, direct or mediated by agents, in the production process, or in  facilitating more generally the  competitive success of the organization \cite{mol2011resources}.

 \paragraph{Pre-orderings on resources.} In Section \ref{ssec:gifts}, we mentioned that  the resources the agents possess at the end  of the story cannot be used without those they possess at the beginning, while these can be used on their own. This observation suggest that alternative or additional orderings of resources can be considered and studied, such as the `dependence' preorder  between resources, which might be relevant to the analysis of some situations.

\paragraph{Comparing capabilities.} %There is no axiom in LRC encoding specific information on capabilities which would support inferences deriving formulas of the form $\neg(\alpha\triangleright A)$.\marginnote{do you like this explanation? shall I add it to the conclusions?} Axioms encoding specific such conditions need to be added for each specific context.  For instance, it would be very straightforward, but perhaps not very illuminating, to rephrase   the case study in Section 5.3 (the Gift of the Magi) so as to make it into an example of failure of capabilities (neither Jim nor Della is  capable to make the other happy), by defining what it means to {\em not} be capable in the particular context of this case study.   From the viewpoint of the general motivation for the introduction of the LRC framework  (capture and reason about the resource flow within organizations)  the main interest is planning and decision-making, and hence being able to formulate sufficient conditions for the {\em success} of a plan, on the base of what the agents are capable of. In this respect, rather than  formulating general criteria for establishing whether an agent is {\em not} capable in absolute terms,
 The logic LRC  provides a formal environment where to explore the consequences for organizations of some agent's being {\em more} capable than some other agent at bringing about a certain state of affairs. In this environment, we can express that  agent $\aga$ is {\em at least as capable} than agent $\agb$ at bringing about $A$ e.g.~when $\alpha\bc_{\!\aga}A$ and $\beta\bc_{\!\agb}A$, and $\beta\vdash \alpha$ (i.e.~to bring about the same state of affairs, $\agb$  uses a resource which is at least as powerful as, possibly more powerful than, the resource used by $\aga$). Ricardo's economic theory of comparative advantage with regard to the division of labour in organizations \cite{ricardo1891principles} can be formalized on the basis of capabilities differentials.

 \paragraph{Algebraic canonicity and relational semantics.} The theory of canonical extensions provides a way to extract  relational semantics from the algebraic semantics via algebraic canonicity.
In Section \ref{ssec:algebraic canonicity}, we have shown  that the logic LRC is  complete w.r.t.\ {\em perfect} LRC-algebraic models. Via standard discrete Stone-type duality, perfect LRC-algebraic models can be associated with set-based structures similar to Kripke models, thus providing complete relational semantics for LRC. The specification of this relational semantics and its properties is part of future work.

\paragraph{Semantics of Petri nets.} We are currently studying Petri nets as an alternative semantic framework for LRC. In particular, the reachability problem for finite Petri nets is equivalent to the deducibility problem for sequents in finitely axiomatized theory in the pure-tensor fragment of linear logic \cite{MartiMeseguer,Troelstra92}. More recently, \cite{EngbergWinskel} proved completeness for several versions of linear logic w.r.t.\ Petri nets. We are  investigating similar issues in the setting of LRC.

\paragraph{Decidability, finite model property, complexity.} The computational properties of LRC such as decidability and complexity are certainly of interest. In particular, two, in general distinct, problems are to be considered: the decidability of the set of theorems, and the decidability of the (finite) consequence relation.\footnote{The two problems coincide in presence of  deduction theorem, which is available in intuitionistic logic and for the formula-fragment of  LRC, but not for the pure-resource fragment of LRC.} %However, in our setting we have to take in account the fact that we use additional modal rules in the Hilbert system, and, more importantly, that we work in a multi-type setting.}.

A standard argument establishing decidability is via the so-called finite model property (FMP), i.e.\ proving  that any non-theorem  can be refuted in a finite structure. Together with finite axiomatizability and completeness of the underlying logic, FMP entails the decidability of the set of theorems. For the second problem a stronger property is needed: the finite embeddability property, which can be seen as the finite model property for quasi-identities and, together with finite axiomatizability and completeness, entails the decidability of the finite consequence relation of the underlying logic.

We wish to stress that the decidability problems for LRC subsume the complexity and decidability of certain substructural logics. Indeed, as mentioned earlier, the pure-resource fragment of LRC is similar to (propositional, exponential-free) linear affine logic, which essentially coincides with the distributive Full Lambek calculus with weakening, a logic for which the finite consequence relation, and hence the set of theorems, are known to be decidable (see \cite{ono1998decidability,OkadaTerui}); FEP for integral residuated groupoids has been proved in \cite{blok2005finite}, for a simple proof of FEP in the distributive setting see also \cite{hanikova2014finite}, where a coNEXP upper bound is obtained. We hope we can use the algebraic semantics of LRC to investigate, and hopefully establish decidability of LRC and its variants using FMP or FEP.

\paragraph{Syntactic decidability.} %Categorizing analytic rules and .}
An alternative path  towards decidability for LRC consists in adapting the techniques developed in %following the strategy introduced by Gentzen to prove the decidability of classical propositional logic, in
\cite{kopylov1995decidability}, where a syntactic proof is given of  the decidability of full propositional affine linear logic, by showing that it is enough to consider sequents in a suitable normal form. %A future direction concerns  investigating whether the proof of the decidability of LRC can be obtained by suitably adapting these techniques.
An encouraging hint is the fact that the full Lambek calculus with weakening is decidable \cite{OkadaTerui,ono1998decidability}. However, it is also known that, for certain substructural logics, distributivity is problematic for decidability.

\appendix

%\marginnote{The following section is already edited. Please read it and let us know if you have any suggestions.}
\section{Proper multi-type calculi and their cut elimination}
\label{sec:properly semi displayable}
In the present section, we report on the Belnap-style meta-theorem that we appeal to in order to show that the calculus introduced in Section \ref{sec: Display-style sequent calculus D.LRC} enjoys cut elimination. This meta-theorem was proven in \cite{Trends} for the so-called {\em proper multi-type calculi}. In order to make the exposition self-contained, in what follows we will report the definition of proper multi-type calculi and the statement of the meta-theorem.

\begin{definition}\label{def:gen strongly type unif}
A sequent $x \vdash y$ is {\em type-uniform} if $x$ and $y$ are of the same type $\mathsf{T}$ (cf.\ \cite[Definition 3.1]{Multitype}).
\end{definition}

\begin{definition}\label{def:proper multi-type calculus}
A {\em proper multi-type calculus} is any calculus in a multi-type language satisfying the following list of conditions:\footnote{See \cite{GAV} for a discussion on C$'_5$ and C$''_5$.}

\noindent \textbf{C$_1$: Preservation of operational terms.\;} Each operational term occurring in a premise of an inference rule {\em inf} is a subterm of some operational term in the conclusion of {\em inf}.

\noindent \textbf{C$_2$: Shape-alikeness of parameters.\;} Congruent parameters (i.e.\ non-active terms in the application of a rule) are occurrences of the same structure.

\noindent \textbf{C$'_2$: Type-alikeness of parameters.\;}  Congruent parameters have exactly the same type. This condition bans the possibility that a parameter changes type along its history.

\noindent \textbf{C$_3$: Non-proliferation of parameters.\;} Each parameter in an inference rule {\em inf} is congruent to at most one constituent in the conclusion of {\em inf}.

\noindent \textbf{C$_4$: Position-alikeness of parameters.\;} Congruent parameters are either all precedent or all succedent parts of their respective sequents. In the case of calculi enjoying the display property, precedent and succedent parts are defined in the usual way (see \cite{Belnap}). Otherwise, these notions can still be defined by induction on the shape of the structures, by relying on the polarity of each coordinate of the structural connectives.

\noindent \textbf{C$'_5$: Quasi-display of principal constituents.\;} If an operational term $a$ is principal in the conclusion sequent $s$ of a derivation $\pi$, then $a$ is in display, unless $\pi$ consists only of its conclusion sequent $s$ (i.e.\ $s$ is an axiom).

\noindent \textbf{C$''_5$: Display-invariance of axioms.} If $a$ is principal in an axiom $s$, then $a$ can be isolated by applying Display Postulates and the new sequent is still an axiom.

\noindent \textbf{C$'''_5$: Closure of axioms under surgical cut.} If $(x\vdash y)([a]^{pre}, [a]^{suc})$,  $a\vdash z [a]^{suc}$ and $v[a]^{pre}\vdash a$ are axioms, then $(x\vdash y)([a]^{pre}, [z/a]^{suc})$ and $(x\vdash y)([v/a]^{pre}, [a]^{suc})$  are again axioms.

\noindent \textbf{C$'_6$: Closure under substitution for succedent parts within each type.\;} Each rule is closed under simultaneous substitution of arbitrary structures for congruent operational terms occurring in succedent position, {\em within each type}.

\noindent \textbf{C$'_7$: Closure under substitution for precedent parts within each type.\;} Each rule is closed under simultaneous substitution of arbitrary structures for congruent operational terms occurring in precedent position, {\em within each type}.

\noindent Condition C$_6'$ (and likewise C$_7'$) ensures, for instance, that if the following inference is an application of the rule $R$:

\begin{center}
\AX$(x \fCenter y) \big([a]^{suc}_{i} \,|\, i \in I\big)$
\RightLabel{$R$}
\UI$(x' \fCenter y') [a]^{suc}$
\DisplayProof
\end{center}

\noindent and $\big([a]^{suc}_{i} \,|\, i \in I\big)$ represents all and only  the occurrences of $a$ in the premiss which are congruent to the occurrence of $a$  in the conclusion (if $I = \varnothing$, then the occurrence of $a$ in the conclusion is congruent to itself), %is congruent to all the occurrences $[A]^{suc}_{i}$ if the set of indices $I$ is nonempty (otherwise $A$ is congruent to itself),
then also the following inference is an application of the same rule $R$:

\begin{center}
\AX$(x \fCenter y) \big([z/a]^{suc}_{i} \,|\, i \in I\big)$
\RightLabel{$R$}
\UI$(x' \fCenter y') [z/a]^{suc}$
\DisplayProof
\end{center}

\noindent where the structure $z$ is substituted for $a$.

\noindent This condition caters for the step in the cut elimination procedure in which the cut needs to be ``pushed up'' over rules in which the cut-formula in  succedent position  is parametric (cf.~\cite[Section 4]{Trends}).

\noindent \textbf{C$'_8$: Eliminability of matching principal constituents.\;}
This condition requests a standard Gentzen-style checking, which is now limited to the case in which  both cut formulas are {\em principal}, i.e.~each of them has been introduced with the last rule application of each corresponding subdeduction. In this case, analogously to the proof Gentzen-style, condition C$'_8$ requires being able to transform the given deduction into a deduction with the same conclusion in which either the cut is eliminated altogether, or is transformed in one or more applications of the cut rule, involving proper subterms of the original operational cut-term. In addition to this, specific to the multi-type setting is the requirement that the new application(s) of the cut rule be also {\em strongly type-uniform} (cf.\ condition C$_{10}$ below).

\noindent \textbf{C$_9$: Type-uniformity of derivable sequents.} Each derivable sequent is type-uniform. %\marginpar{\raggedright\tiny{A: needs heavy editing: shorten present discussion, refer to first paper, expand on discussion relevant to the specific multi-type setting; regroup new conditions together???}}

%\marginpar{Vlasta: From the condition C$_9$ it trivially follows that the cut rule is type-uniform. Furthermore, because of the presence of display postulates we confined ourselves to a cut rule that acts upon formulas that are in isolation, we can show that if all other rules of a system preserve type-uniformity, also cut rule will preserve type-uniformity.}

\noindent \textbf{C$_{10}$: Preservation of type-uniformity of cut rules.} All cut rules preserve type-uniformity.
\end{definition}

In the context of proper multi-type calculi we say that a rule is {\em analytic} if it satisfies conditions C$_1$-C$'_7$ of the list above. Analytic rules can be added to a given proper multi-type calculus, and the resulting calculus  enjoys cut elimination and subformula property.

We state the cut-elimination metatheorem which we  appeal to when establishing the cut elimination for the calculus  introduced in Section \ref{sec: Display-style sequent calculus D.LRC}.

\begin{thm}
\label{Thm cut elimination}
Any calculus satisfying C$_2$, C$'_2$, C$'_3$, C$_4$, C$'''_5$, C$''''_5$, C$'_6$, C$'_7$, C$'_8$, C$''_8$, C$_9$ and C$_{10}$ is cut-admissible. If also C$_1$ is satisfied, then the calculus enjoys the subformula property.
\end{thm}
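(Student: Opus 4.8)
\textbf{Proof plan for Theorem \ref{Thm cut elimination} (restatement).}
The statement to be proven is the cut-elimination metatheorem for proper multi-type calculi that is cited from \cite{Trends}. Since the excerpt quotes this as an externally established result, my plan is not to reprove it from scratch but to indicate how its proof goes, following the Belnap-style template adapted to the multi-type setting. The plan is to run the standard Gentzen--Belnap argument: show that a topmost application of the cut rule can always be either removed or replaced by cuts on proper subterms (and/or cuts higher up in the derivation tree), and then conclude cut-admissibility by a double induction on the complexity of the cut-term and the height of the two subderivations feeding into the cut. The role of conditions C$_2$--C$_{10}$ is precisely to make each case of this induction go through.

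First I would fix a topmost cut in a derivation $\pi$, with left subderivation $\pi_1$ ending in $x\vdash y[a]^{suc}$ and right subderivation $\pi_2$ ending in $z[a]^{pre}\vdash w$ (in the surgical formulation, the cut-term need not be in display on both sides). I would then distinguish the usual three stages. \emph{Principal stage}: both occurrences of the cut-term $a$ are principal in the last rules of $\pi_1$ and $\pi_2$; here condition C$'_5$ (quasi-display of principal constituents) ensures $a$ can be brought into display on each side via display postulates, and then C$'_8$ (eliminability of matching principal constituents) provides the Gentzen-style rewriting that replaces the cut on $a$ by cuts on its immediate subterms --- with the extra multi-type proviso (C$_{10}$, feeding on C$_9$) that these new cuts are again type-uniform, so the induction on subterm complexity is well-founded within each type. \emph{Parametric stage}: at least one cut-term occurrence is parametric in the last rule applied; here one ``pushes the cut up'' past that rule. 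This is where C$'_6$ and C$'_7$ (closure under substitution for succedent/precedent parts within each type) are used: they guarantee that after substituting the other premise's structure for the parametric cut-term, one still has a legitimate instance of the same rule. Shape-alikeness, type-alikeness, non-proliferation and position-alikeness of parameters (C$_2$, C$'_2$, C$_3$, C$_4$) are what make ``following the history'' of the parametric occurrence coherent, so that the substitution is unambiguous and the resulting cuts are on the same formula but strictly higher in the tree. \emph{Axiomatic stage}: if the parametric occurrence traces back to an axiom, conditions C$''_5$ and C$'''_5$ (display-invariance of axioms, closure of axioms under surgical cut) ensure the result of the substitution is still an axiom, terminating that branch.

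The induction is then organized lexicographically: the outer measure is the complexity of the operational cut-term, the inner measure is the sum of the heights of $\pi_1$ and $\pi_2$. The principal stage strictly decreases the outer measure (possibly introducing several cuts, each on a proper subterm, each type-uniform by C$'_8$+C$_{10}$); the parametric and axiomatic stages keep the outer measure fixed but strictly decrease the inner measure, or eliminate a cut outright. Hence every cut can be eliminated, giving cut-admissibility. For the final clause, if in addition C$_1$ (preservation of operational terms: every operational subterm in a premise already occurs in the conclusion) holds, then a cut-free derivation of $x\vdash y$ contains only subterms of $x$ and $y$, which is the subformula property.

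The main obstacle --- and the point where the multi-type adaptation genuinely departs from Belnap's original --- is the interaction of the \emph{surgical} cut rule with the \emph{absence of full display} (D.LRC does not enjoy the display property, as noted after the calculus is presented). In a calculus with full display one can always isolate the cut-term on both sides before doing the Gentzen rewriting; here one must instead verify that the substitution-closure conditions C$'_6$ and C$'_7$ are strong enough to push surgical cuts through the rules in which the cut-term sits in a non-displayable position (notably inside the second coordinate of $\BC$ in consequent position), and that the new cuts produced remain type-uniform so that C$_9$ and C$_{10}$ apply. Checking that D.LRC actually satisfies all of C$_1$--C$_{10}$ (in particular C$'_8$ for the cut-term $\alpha\bc A$, handled explicitly in Section \ref{ssec:cut elim subformula}) is exactly the content of the verification that D.LRC is a proper multi-type calculus, and the present theorem is then invoked as a black box to conclude.
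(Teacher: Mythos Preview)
Your proposal is appropriate: the paper does \emph{not} prove this theorem at all. Theorem~\ref{Thm cut elimination} is stated in the appendix purely as a black-box result imported from \cite{Trends}; the surrounding text says explicitly that ``this meta-theorem was proven in \cite{Trends}'' and the paper only \emph{states} it so that it can be invoked in Section~\ref{ssec:cut elim subformula}. You correctly recognized this and chose to sketch the Belnap-style argument rather than claim a self-contained proof, which is the right call.

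Your sketch of the three-stage (principal/parametric/axiomatic) reduction with the lexicographic double induction is the standard template, and your attribution of roles to the various C-conditions is essentially accurate. One small caution: the list of conditions in the theorem statement (C$'_3$, C$'''_5$, C$''''_5$, C$''_8$) does not line up perfectly with the conditions actually defined in Definition~\ref{def:proper multi-type calculus} (which has C$_3$, C$'_5$, C$''_5$, C$'''_5$, and only one C$'_8$); this is an internal inconsistency in the paper's appendix, not something you introduced, but you should be aware that when you invoke e.g.\ C$'_5$ for quasi-display of principal constituents you are using the definition's labeling rather than the theorem's. Since the paper gives no proof to compare against, there is nothing further to contrast.
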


%%%

%\bibliography{BIB}
%\bibliographystyle{plain}

\bibliographystyle{asl}
\bibliography{BIB}

\end{document}